\documentclass{amsproc}

\usepackage{amsmath,amssymb,amscd,amsthm,amsfonts}
\usepackage{latexsym}
\usepackage{stmaryrd}
\usepackage{tikz-cd}
\usepackage[cal=boondox,calscaled=.96]{mathalfa}
\usepackage[debug]{hyperref}

\newcommand{\QQ}{\mathbb{Q}}
\newcommand{\ZZ}{\mathbb{Z}}
\newcommand{\N}{\mathbb{N}}

\newcommand{\sbullet}{{\hspace{.1em}\scriptstyle\bullet\hspace{.1em}}}

\DeclareMathOperator{\ord}{ord}
\DeclareMathOperator{\Aut}{Aut}
\DeclareMathOperator{\supp}{\rm supp}
\DeclareMathOperator{\U}{\mathcal{U}}
\DeclareMathOperator{\Der}{Der}
\DeclareMathOperator{\Hom}{Hom}
\DeclareMathOperator{\End}{End}
\DeclareMathOperator{\HS}{HS}
\DeclareMathOperator{\opvarphi}{\varphi}
\DeclareMathOperator{\Par}{\mathcal{P}}
\DeclareMathOperator{\HH}{\Lambda}

\newcommand{\bchi}{{\boldsymbol\chi}}
\newcommand{\bSigma}{\boldsymbol{\Sigma}}
\newcommand{\bepsilon}{{\boldsymbol\varepsilon}}

\newcommand{\pcirc}{{\scriptstyle \,\circ\,}}
\newcommand{\smallcirc}{{\scriptstyle \circ}}
\newcommand\Id{{\rm Id}}

\newcommand{\bfu}{{\bf u}}
\newcommand{\bfs}{{\bf s}}
\newcommand{\bft}{{\bf t}}

\newcommand{\fd}{\mathfrak{d}}

\newtheorem{thm}{Theorem}[subsection]
\newtheorem{cor}[thm]{Corollary}
\newtheorem{prop}[thm]{Proposition}
\newtheorem{lemma}[thm]{Lemma}

\theoremstyle{definition}
\newtheorem{defi}[thm]{Definition}
\newtheorem{exam}[thm]{Example}
\newtheorem{notacion}[thm]{Notation}

\theoremstyle{remark}
\newtheorem{nota}[thm]{Remark}
\newcommand{\numero}{\refstepcounter{thm}\noindent {\bf  \thethm\ }}


\numberwithin{equation}{section}

\begin{document}

\title{Hasse--Schmidt derivations versus classical derivations}

\author[L. Narv\'{a}ez Macarro]{Luis Narv\'{a}ez Macarro}

\address{ \href{http://personal.us.es/narvaez/}{Luis Narv\'aez Macarro}\\
\noindent \href{http://departamento.us.es/da/}{Departamento de \'Algebra} \&\
\href{http://www.imus.us.es}{Instituto de Matem\'aticas (IMUS)}\\
\href{http://matematicas.us.es}{Facultad de Matem\'aticas}, \href{http://www.us.es}{Universidad de Sevilla}\\
Calle Tarfia s/n, 41012  Sevilla, Spain.}
\email{narvaez@us.es}
\thanks{Partially supported by MTM2016-75027-P, P12-FQM-2696
 and FEDER}

\subjclass[2010]{Primary: 14F10, 13N10; Secondary 13N15}

\date{}

\dedicatory{Dedicated to L\^e D\~ung Tr\'ang}

\keywords{Hasse--Schmidt derivation, derivation, power series ring, substitution map}

\begin{abstract}
In this paper we survey the notion and basic results on multivariate Hasse--Schmidt derivations 
over arbitrary commutative algebras and we associate to such an object a family of classical derivations. We study the behavior of these derivations under the action of substitution maps and we prove that, in characteristic $0$, the original multivariate Hasse--Schmidt derivation can be recovered from the associated family of classical derivations. Our constructions generalize a previous one by M.~Mirzavaziri in the case of a base field of characteristic $0$.
\end{abstract}

\maketitle

\section*{Introduction}

Let $k$ be a commutative ring and $A$ a commutative $k$-algebra. A Hasse--Schmidt derivation of $A$ over $k$ of length $m\geq 0$ (or $m=\infty$), is a sequence $D=(D_0,D_1,\dots,D_m)$ (or $D=(D_0,D_1,\dots)$) of $k$-linear endomorphisms of $A$ such that $D_0$ is the identity map and 
$$ D_\alpha (x y) = \sum_{\scriptscriptstyle \beta + \gamma = \alpha} D_\beta(x) D_\gamma(y),\quad \forall \alpha, \forall x,y\in A.
$$
A such $D$ can be seen as a power series $D=\sum_{\alpha=0}^m D_\alpha t^\alpha$ in the quotient ring $R[[t]]/\langle t^{m+1} \rangle$, with $R=\End_k(A)$ (the ring of endomorphisms of $A$ as $k$-module).
For $i\geq 1$, the $i$th component $D_i$ turns out to be a $k$-linear differential operator of order $\leq i$ vanishing on $1$, in particular $D_1$ is a $k$-derivation of $A$.
\medskip

The notion of Hasse--Schmidt derivation was introduced in \cite{has37} in the case where $k$ is a field of characteristic $p>0$ and $A$ a field of algebraic functions over $k$. This notion was used to understand, among others, Taylor expansions in this setting. But actually, Hasse--Schmidt derivations make sense in full generality.
\medskip

If we are in characteristic $0$ ($\QQ \subset k$), then it is easy to produce examples of Hasse--Schmidt derivations: starting with a $k$-linear derivation $\delta:A\to A$ we consider its exponential:
$$ e^{t\delta} = \sum_{\alpha=0}^\infty \frac{\delta^\alpha}{\alpha!} t^\alpha \in R[[t]],\quad R=\End_k(A).
$$
It is clear that $ e^{t\delta}$ is a Hasse--Schmidt derivation of $A$ over $k$ (of infinite length). This example also proves that, always under the characteristic $0$ hypothesis, any $k$-linear derivation $\delta:A\to A$  appears as the 1-component of some Hasse--Schmidt derivation (of infinite length, and so, of any length $m\geq 1$) of $A$ over $k$. This is what we call ``to be $\infty$-integrable'' (and so ``$m$-integrable'', for each $m\geq 1$) (see \cite{mat-intder-I}). But if we are no more in characteristic $0$, the situation becomes much more involved and integrable derivations deserve special attention (see \cite{nar_2012,hoff_kow_2015,MPTirado_2018} for several recent achievements in that direction).
\medskip

As far as the author knows, two papers have been concerned with the description of Hasse--Schmidt derivations in terms of usual derivations, both in the case where $k$ is a field of characteristic $0$.
In \cite{heer70} it is proven that\footnote{This result has been ``rediscovered'' in \cite{Saymeh_1986} for $A$ a commutative algebra over a field $k$ of characteristic zero.}, if $A$ is a (possibly non-commutative) $k$-algebra, then any Hasse--Schmidt derivation $D=(D_0=\Id,D_1,D_2,\dots)$ of infinite length of $A$ over $k$ is determined by a unique sequence $\delta=(\delta_1,\delta_2,\dots )$ of classical derivations $\delta_i\in\Der_k(A)$. Namely, the expressions relating $D$ and $\delta$ are:
$$
\delta_n = \sum_{r=1}^n \frac{(-1)^{r+1}}{r} \sum_{\substack{\scriptscriptstyle n_1 + \cdots + n_r=n\\ \scriptscriptstyle n_i>0 }} D_{n_1} D_{n_2} \cdots D_{n_r},\   D_n = \sum_{r=1}^n \frac{1}{r!} \sum_{\substack{\scriptscriptstyle n_1 + \cdots + n_r=n\\ \scriptscriptstyle n_i>0 }} \delta_{n_1} \delta_{n_2} \cdots \delta_{n_r},
$$
or in other words:
$$ \sum_{n=0}^\infty D_n t^n = \exp \left( \sum_{n=1}^\infty \delta_n t^n\right).
$$
A similar result is proven in \cite{Mirza_2010}: any Hasse--Schmidt derivation $D=(D_0=\Id,D_1,D_2,\dots)$ of infinite length of $A$ over $k$ determines, and is determined by a sequence $\delta = (\delta_1,\delta_2,\dots)$ of classical derivations given by the following recursive formula:
$$ (n+1) D_{n+1} = \sum_{r=0}^n \delta_{r+1} D_{n-r},\quad n\geq 0.
$$
An interesting reinterpretation of both results can be found in \cite{haze_2011}.
\medskip

The goal of this paper is twofold: to give a survey of multivariate Hasse--Schmidt derivations over a general commutative base ring $k$ and a general commutative $k$-algebra $A$, as defined in \cite{nar_subst_2018}; and to generalize the construction in \cite{Mirza_2010} to this setting.
\medskip

One of our motivations is to understand the relationship between HS-modules, as defined in \cite{nar_envelop}, and classical integrable connections. The paper \cite{nar_HSmod_vs_IC} 
is devoted to prove that both notions are equivalent in characteristic $0$, and the proof strongly depends on the constructions and results of the present paper.
\medskip

A $(p,\Delta)$-variate Hasse--Schmidt derivation of $A$ over $k$ is a family  $D= \left(D_\alpha\right)_{\alpha\in\Delta}$ of $k$-linear endomorphisms of $A$ such that $D_0$ is the identity map and:
$$ D_\alpha (x y) = \sum_{\scriptscriptstyle \beta + \gamma = \alpha} D_\beta(x) D_\gamma(y),\quad \forall \alpha\in \Delta, \forall x,y\in A,
$$
where $\Delta \subset \N^p$ is a non-empty {\em co-ideal}, i.e. a subset of $\N^p$ such that everytime $\alpha\in \Delta$ and $\alpha'\leq \alpha$ (i.e. $\alpha - \alpha' \in \N^p$)  we have $\alpha'\in \Delta$. A simple but important idea is to think on Hasse--Schmidt derivations as series $D=\sum_{\scriptscriptstyle \alpha\in\Delta} D_\alpha \bfs^\alpha$ in the quotient ring $R[[\bfs]]_\Delta$ of the power series ring $R[[\bfs]] = R[[s_1,\dots,s_p]]$, $R=\End_k(A)$, by the two-sided monomial ideal generated by all $\bfs^\alpha$ with $\alpha \in \N^p \setminus \Delta$. 
\medskip

The set $\HS^p_k(A;\Delta)$ of $(p,\Delta)$-variate Hasse--Schmidt derivations is a subgroup of the group of units $\left(R[[\bfs]]_\Delta\right)^\times$, and it also carries the action of substitution maps: 
 given a substitution map $\varphi: A[[s_1,\dots,s_p]]_\Delta \to A[[t_1,\dots,t_q]]_\nabla $ and a $(p,\Delta)$-variate Hasse--Schmidt derivation $D=\sum_{\scriptscriptstyle \alpha\in\Delta} D_\alpha \bfs^\alpha$ we obtain a new ($q,\nabla)$-variate Hasse--Schmidt derivation given by:
$$\varphi\sbullet D := \sum_{\scriptscriptstyle \alpha\in\Delta} \varphi(\bfs^\alpha) D_\alpha.
$$
This new structure is a key point in \cite{nar_envelop}.
\medskip

To generalize the construction in \cite{Mirza_2010}, we reinterpret the aforementioned recursive formula by means of the ``logarithmic derivative type'' maps:
$$  \varepsilon^i:  D \in \HS^p_k(A;\Delta) \longmapsto \varepsilon^i(D) := D^* \left( s_i \frac{\partial D}{\partial s_i}\right) \in R[[\bfs]]_\Delta,\quad i=1,\dots,p,
$$
where $D^*$ denotes the inverse of $D$. The starting point is to check that the coefficients of $\varepsilon(D)$ are always classical derivations, i.e. $\varepsilon(D) \in \Der_k(A)[[\bfs]]_\Delta$.
\bigskip

Let us comment on the content of the paper.
\medskip

In section 1 we have gathered some notations and constructions on power series modules, powers series rings and substitution maps, most of them taken from \cite{nar_subst_2018}, and we study the maps $\varepsilon^i$,  and their conjugate $\overline{\varepsilon}^i$.
\medskip

In section 2 we recall the notion and the basic properties of multivariate Hasse--Schmidt derivations and of the action of substitution maps on these objects.
\medskip

Section 3 contains the main original results of this paper. First, we see how the $\varepsilon^i$ or $\overline{\varepsilon}^i$ maps of section 1 allow us to associate to any multivariate Hasse--Schmidt derivation a power series whose coefficients are classical derivations, as explained before. When $\QQ\subset k$ we obtain a characterization of multivariate Hasse--Schmidt derivations in terms of the $\varepsilon^i$ (or $\overline{\varepsilon}^i$) maps, and we prove that any multivariate Hasse--Schmidt derivation can be constructed from a power series of classical derivations. To finish, we study the behavior of the $\varepsilon^i$ maps of a multivariate Hasse--Schmidt derivation under the action of substitution maps.

\section{Notations and preliminaries}

\subsection{Notations}

Throughout the paper we will use the following notations:
\smallskip

\noindent -) $k$ is a commutative ring and $A$ a commutative
$k$-algebra.
\smallskip

\noindent -) $\bfs = \{s_1,\dots,s_p\}$, $\bft =\{t_1,\dots,t_q\}$, \dots\  are sets of variables.
\smallskip

\noindent -) $\U^p(R;\Delta)$: 
see Notation \ref{notacion:Ump}.
\smallskip

\noindent -) ${\bf C}_e(\varphi,\alpha)$: see (\ref{eq:exp-substi}).
\smallskip

\noindent -) $\varphi \sbullet r, r\sbullet \varphi$: see \ref{nume:def-sbullet}.
\smallskip

\noindent -) $\varphi^D$: see \ref{prop:varphi-D-main}.
\smallskip

\subsection{Some constructions on power series rings and modules}

Throughout this section, $k$ will be a commutative ring, $A$ a commutative $k$-algebra and $R$ a ring, not-necessarily commutative.
\medskip

Let $p\geq 0$ be an integer and let us call $\bfs = \{s_1,\dots,s_p\}$ a set of $p$ variables. The support of each $\alpha\in \N^p$ is defined as  $\supp \alpha := \{i\ |\ \alpha_i\neq 0\}$. 
The monoid $\N^p$ is endowed with a natural partial ordering. Namely, for $\alpha,\beta\in \N^p$, we define:
$$ \alpha \leq \beta\quad \stackrel{\text{def.}}{\Longleftrightarrow}\quad \exists \gamma \in \N^p\ \text{such that}\ \beta = \alpha + \gamma\quad \Longleftrightarrow\quad \alpha_i \leq \beta_i\quad \forall i=1\dots,p.$$
We denote $|\alpha| := \alpha_1+\cdots+\alpha_p$. 
\medskip

Let $p\geq 1$ be an integer and $\bfs = \{s_1,\dots,s_p\}$ a set of variables. 
If $M$ is an abelian group and $M[[\bfs]]$ is the abelian group of power series with coefficients in $M$,
the {\em support}  of a series $m=\sum_\alpha m_\alpha \bfs^\alpha \in M[[\bfs]]$ is $\supp(m) := \{  \alpha \in \N^p\ |\  m_\alpha \neq 0\} \subset \N^p$. We have $m=0\Leftrightarrow \supp(m) = \emptyset$. 
\medskip

The abelian group $M[[\bfs]]$ is clearly a $\ZZ[[\bfs]]$-module, which will be always endowed with the $\langle \bfs \rangle$-adic topology.

\begin{defi} We say that a subset $\Delta \subset \N^p$ is an {\em ideal} (resp. a
{\em co-ideal}) of $\N^p$  
if everytime $\alpha\in\Delta$ and $\alpha\leq \alpha'$ (resp. $\alpha'\leq \alpha$), then $\alpha'\in\Delta$.
\end{defi}
It is clear that $\Delta \subset \N^p$ is an ideal if and only if its complement $\Delta^c$ is a co-ideal, and that the union and the intersection of any family of ideals (resp. of co-ideals)  of $\N^p$ is again an ideal (resp. a co-ideal) of $\N^p$. 
Examples of ideals (resp. of co-ideals) of $\N^p$ are the $\beta + \N^p$
(resp. the $\{\alpha \in \N^p\ |\ \alpha\leq \beta\}$ ) with $\beta \in \N^p$. The  $\{\alpha \in \N^p\ |\ |\alpha|\leq m\}$ with $m\geq 0$ are also co-ideals.
Notice that a co-ideal $\Delta\subset \N^p$ is non-empty if and only if $\{0\}  \subset \Delta$.
\medskip

\numero \label{nume:M[[bfs]]Delta}
Let $M$ be an abelian group. 
For each co-ideal $\Delta \subset \N^p$, we denote by $\Delta_M$ the closed sub-$\ZZ[[\bfs]$-bimodule of $M[[\bfs]]$ whose elements are the formal power series $\sum_{\alpha\in\N^p} m_\alpha \bfs^\alpha$ such that $m_\alpha=0$ whenever $\alpha\in \Delta$, and $M[[\bfs]]_\Delta := M[[\bfs]]/\Delta_M$. The elements in $M[[\bfs]]_\Delta$ are power series of the form 
$\sum_{\scriptscriptstyle \alpha\in\Delta} m_\alpha \bfs^\alpha$, $m_\alpha \in M$. If $f:M\to M'$ is a homomorphism of abelian groups, we will denote by $\overline{f}: M[[\bfs]]_\Delta \to M'[[\bfs]]_\Delta$ the $\ZZ[[\bfs]]_\Delta$-linear map defined as $\overline{f}\left(\sum_{\scriptscriptstyle \alpha\in\Delta} m_\alpha \bfs^\alpha\right) = \sum_{\scriptscriptstyle \alpha\in\Delta} f(m_\alpha) \bfs^\alpha$.
\medskip

If $R$ is a ring, then $\Delta_R$ is a closed two-sided ideal of $R[[\bfs]]$ and so $R[[\bfs]]_\Delta$ is a topological ring, which we always consider endowed with the $\langle \bfs \rangle$-adic topology ( $=$ to the quotient topology). Similarly, if $M$ is an $(A;A)$-bimodule (central over $k$), then $M[[\bfs]]_\Delta$ is an $(A[[\bfs]_\Delta;A[[\bfs]]_\Delta)$-bimodule (central over $k[[\bfs]]_\Delta$).
\medskip

For $\Delta' \subset \Delta$ non-empty co-ideals of $\N^p$, we have natural $\ZZ[[\bfs]]$-linear projections 
$\tau_{\Delta \Delta'}:M[[\bfs]]_{\Delta} \longrightarrow M[[\bfs]]_{\Delta'}$,that we call {\em truncations}:
$$\tau_{\Delta \Delta'} : \sum_{\scriptscriptstyle \alpha\in\Delta} m_\alpha \bfs^\alpha \in M[[\bfs]]_{\Delta'} \longmapsto \sum_{\scriptscriptstyle \alpha\in\Delta'} m_\alpha \bfs^\alpha \in M[[\bfs]]_{\Delta}.
$$
If $M$ is a ring (resp. an $(A;A)$-bimodule), then the truncations $\tau_{\Delta \Delta'}$ are ring homomorphisms (resp.  $(A[[\bfs]]_{\Delta};A[[\bfs]]_{\Delta})$-linear maps). For $\Delta' = \{0\}$ we have $ M[[\bfs]]_{\Delta'} = M$ and the kernel of $\tau_{\Delta \{0\}}$ will be denoted by $M[[\bfs]]_{\Delta,+}$. 
We have a bicontinuous isomorphism:
$$ M[[\bfs]]_\Delta = \lim_{\longleftarrow} M[[\bfs]]_{\Delta'}
$$
where $\Delta'$ runs over all
finite co-ideals contained in $\Delta$.

\begin{defi} \label{def:k-algebra-over-A}
A $k$-algebra over $A$  is a (not-necessarily commutative) $k$-algebra $R$ endowed with a map of $k$-algebras $\iota:A \to R$. A map between two $k$-algebras $\iota:A \to R$ and $\iota':A \to R'$ over $A$ is a map $g:R\to R'$ of $k$-algebras such that $\iota' = g \pcirc \iota$. 
\end{defi}

It is clear that if $R$ is a $k$-algebra over $A$, then $R[[\bfs]]_\Delta$ is a $k[[\bfs]]_\Delta$-algebra over $A[[\bfs]]_\Delta$.

\begin{notacion} \label{notacion:Ump} Let $R$ be a ring, $p\geq 1$ and  $\Delta\subset \N^p$ a non-empty co-ideal. We denote by $\U^p(R;\Delta)$ the multiplicative sub-group of the units of $R[[\bfs]]_\Delta$ whose 0-degree coefficient is $1$. The multiplicative inverse of a unit $r\in R[[\bfs]]_\Delta$ will be denoted by $r^*$.
Clearly, $\U^p(R;\Delta)^{\text{\rm opp}} = \U^p(R^{\text{\rm opp}};\Delta)$.
For $\Delta\subset \Delta'$ co-ideals we have $\tau_{\Delta'\Delta}\left(\U^p(R;\Delta')\right) \subset \U^p(R;\Delta)$ and the truncation map
$\tau_{\Delta'\Delta}:\U^p(R;\Delta') \to \U^p(R;\Delta)$ is a group homomorphisms. Clearly, we have:
\begin{equation} \label{eq:U-inv-limit-finite}
   \U^p(R;\Delta) =  \lim_{\stackrel{\longleftarrow}{\substack{\scriptscriptstyle \Delta' \subset \Delta\\ \scriptscriptstyle  \sharp \Delta'<\infty}}} \U^p(R;\Delta').
\end{equation}
If $p=1$ and $\Delta =  \{i\in\N\ |\ i\leq m\}$ we will simply denote $\U(R;m):= \U^1(R;\Delta)$.
\medskip

For any ring homomorphism $f:R\to R'$, the induced ring homomorphism $\overline{f}: R[[\bfs]]_\Delta \to R'[[\bfs]]_\Delta$ sends $\U^p(R;\Delta)$ into $\U^p(R';\Delta)$ and so it induces natural group homomorphisms
$\U^p(R;\Delta) \to \U^p(R';\Delta)$. 
\end{notacion}

We recall the following easy result (cf. Lemma 2 in \cite{nar_subst_2018}).

\begin{lemma} \label{lemma:unit-psr}
Let $R$ be a ring and $\Delta\subset \N^p$ a non-empty co-ideal. 
The units in $R[[\bfs]]_\Delta$ are those power series $r=\sum r_\alpha \bfs^\alpha$ such that $r_0$ is a unit in $R$. Moreover, in the special case where $r_0=1$, the inverse $r^*= \sum r^*_\alpha \bfs^\alpha$ of $r$ is given by
$r^*_0 = 1$ and
$$ r^*_\alpha =  \sum_{d=1}^{|\alpha|} (-1)^d
\sum_{\alpha^\bullet \in \Par(\alpha,d)}
r_{\alpha^1} \cdots  r_{\alpha^d}\quad \text{for\ }\ \alpha\neq 0,
$$
where $\Par(\alpha,d)$ is the set of $d$-uples $\alpha^\bullet=(\alpha^1,\dots,\alpha^d)$ with $\alpha^i \in \N^{(\bfs)}$, $\alpha^i\neq 0$, and $\alpha^1+\cdots + \alpha^d=\alpha$.
\end{lemma}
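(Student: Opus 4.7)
The plan is to first establish the characterization of units in $R[[\bfs]]_\Delta$, and then derive the explicit formula by expanding a geometric series.

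For the characterization, the necessity is immediate: if $rs = sr = 1$ in $R[[\bfs]]_\Delta$, then extracting the $0$-degree coefficient (i.e., applying the truncation $\tau_{\Delta \{0\}}$) shows $r_0 s_0 = s_0 r_0 = 1$, so $r_0$ is a unit in $R$. For sufficiency, assume $r_0$ is a unit. Then $r = r_0 (1 + n)$ where $n := r_0^{-1}(r - r_0) \in R[[\bfs]]_{\Delta,+}$, so it suffices to invert $1+n$ for $n$ in the augmentation ideal.

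To invert $1+n$, I would use the geometric series $\sum_{d\geq 0} (-1)^d n^d$. The key convergence observation is that in any finite truncation $R[[\bfs]]_{\Delta'}$ with $\Delta' \subset \Delta$ a finite co-ideal, the image of $n$ is nilpotent: indeed, the coefficient of $\bfs^\alpha$ in $n^d$ involves only decompositions $\alpha = \alpha^1 + \cdots + \alpha^d$ with each $\alpha^i \neq 0$, forcing $|\alpha|\geq d$, so $n^d$ vanishes in $R[[\bfs]]_{\Delta'}$ once $d$ exceeds $\max\{|\alpha| : \alpha \in \Delta'\}$. Since $R[[\bfs]]_\Delta = \varprojlim R[[\bfs]]_{\Delta'}$ by the bicontinuous isomorphism stated earlier, the partial sums converge to a well-defined element $(1+n)^*$, which is a genuine two-sided inverse of $1+n$ by the usual telescoping identities $(1+n)\bigl(\sum_{d=0}^N (-1)^d n^d\bigr) = 1 + (-1)^N n^{N+1}$, valid term-by-term in each finite quotient.

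Now specialize to $r_0 = 1$, so $r = 1 + n$ with $n = \sum_{\alpha\neq 0} r_\alpha \bfs^\alpha$. Expanding the $d$-th power,
\[
 n^d = \sum_{\alpha^1,\dots,\alpha^d \neq 0} r_{\alpha^1} \cdots r_{\alpha^d}\, \bfs^{\alpha^1 + \cdots + \alpha^d},
\]
the coefficient of $\bfs^\alpha$ in $n^d$ is exactly $\sum_{\alpha^\bullet \in \Par(\alpha,d)} r_{\alpha^1} \cdots r_{\alpha^d}$. Summing over $d$ and recalling that $\Par(\alpha,d) = \emptyset$ whenever $d > |\alpha|$ yields the stated closed-form expression for $r^*_\alpha$ (the $d=0$ term contributes only to $r^*_0 = 1$).

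There is no real obstacle here; the only subtlety is justifying convergence of the geometric series when $\Delta$ is infinite, which is handled by passing to the inverse limit of finite co-ideals where $n$ becomes genuinely nilpotent.
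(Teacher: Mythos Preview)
Your argument is correct and is the standard one: necessity via the degree-$0$ truncation, sufficiency via the geometric series, and convergence handled by nilpotence of the augmentation ideal in each finite truncation together with the inverse-limit description of $R[[\bfs]]_\Delta$. The paper itself does not prove this lemma but simply recalls it as an easy result from \cite{nar_subst_2018}, so there is nothing to compare against; your proposal supplies exactly the routine details the paper omits.
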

\bigskip

\numero \label{nume:widetilde-r}
 Let $E,F$ be $A$-modules. For each $r= \sum_\beta r_\beta \bfs^\beta \in \Hom_k(E,F)[[\bfs]]_\Delta$ we denote by $\widetilde{r}: E[[\bfs]]_\Delta \to F[[\bfs]]_\Delta$ the map defined by:
$$ \widetilde{r} \left( \sum_{\scriptscriptstyle \alpha\in \Delta} e_\alpha \bfs^\alpha \right) :=
\sum_{\scriptscriptstyle \alpha\in \Delta } \left( \sum_{\scriptscriptstyle \beta + \gamma=\alpha} r_\beta(e_\gamma) \right) \bfs^\alpha,
$$
which is obviously a $k[[\bfs]]_\Delta$-linear map. 
It is clear that the map:
\begin{equation} \label{eq:tilde-map}
 r\in \Hom_k(E,F)[[\bfs]]_\Delta \longmapsto \widetilde{r} \in \Hom_{k[[\bfs]]_\Delta}(E[[\bfs]]_\Delta,F[[\bfs]]_\Delta)
\end{equation}
is $(A[[\bfs]]_\Delta;A[[\bfs]]_\Delta)$-linear.
\medskip

If $f:E[[\bfs]]_\Delta \to F[[\bfs]]_\Delta$ is a $k[[\bfs]]_\Delta$-linear map, let us denote by $f_\alpha:E \to F$, $\alpha\in\Delta $, the $k$-linear maps defined by:
$$ f(e) = \sum_{\scriptscriptstyle \alpha\in \Delta} f_\alpha(e) \bfs^\alpha,\quad \forall e\in E.
$$
If $g:E\to F[[\bfs]]_\Delta$ is a $k$-linear map, we denote by $g^e:E[[\bfs]]_\Delta \to F[[\bfs]]_\Delta$ the unique $k[[\bfs]]_\Delta$-linear map extending $g$ to $E[[\bfs]]_\Delta = k[[\bfs]]_\Delta \widehat{\otimes}_k E$. It is given by:
\begin{equation}  \label{eq:g^e}
 g^e \left( \sum_\alpha e_\alpha \bfs^\alpha \right) := \sum_\alpha g(e_\alpha) \bfs^\alpha.
\end{equation}
We have a $k[[\bfs]]_\Delta$-bilinear and $A[[\bfs]]_\Delta$-balanced map:
$$ \langle -,-\rangle : (r,e) \in \Hom_k(E,F)[[\bfs]]_\Delta \times E[[\bfs]]_\Delta \longmapsto \langle r,e\rangle := \widetilde{r}(e) \in F[[\bfs]]_\Delta.
$$
The following assertions are clear (see \cite[Lemma 3]{nar_subst_2018}):
\begin{enumerate}
\item[1)] The map (\ref{eq:tilde-map})
 is an isomorphism of $(A[[\bfs]]_\Delta;A[[\bfs]]_\Delta)$-bimodules. When $E=F$ it is an isomorphism of 
$k[[\bfs]]_\Delta$-algebras over $A[[\bfs]]_\Delta$.
\item[2)] The restriction map:
$$f \in  \Hom_{k[[\bfs]]_\Delta}(E[[\bfs]]_\Delta,F[[\bfs]]_\Delta) \mapsto f|_E \in \Hom_k(E,F[[\bfs]]_\Delta),
$$ 
is an isomorphism of $(A[[\bfs]]_\Delta;A)$-bimodules.
\end{enumerate}

Let us call $R = \End_k(E)$. As a consequence of the above properties, the composition of the maps:
\begin{equation}  \label{eq:comple_formal_1}
 R[[\bfs]]_\Delta \xrightarrow{r \mapsto \widetilde{r}} \End_{k[[\bfs]]_\Delta}(E[[\bfs]]_\Delta) \xrightarrow{f \mapsto f|_E} \Hom_k(E,E[[\bfs]]_\Delta)
\end{equation}
is an isomorphism of $(A[[\bfs]]_\Delta;A)$-bimodules, and so $\Hom_k(E,E[[\bfs]]_\Delta)$ inherits a natural structure of $k[[\bfs]]_\Delta$-algebra over $A[[\bfs]]_\Delta$. Namely, if $g,h:E \to E[[\bfs]]_\Delta$ are $k$-linear maps
with:
$$ g(e)=\sum_{\scriptscriptstyle \alpha\in \Delta} g_\alpha(e)\bfs^\alpha,\ h(e)=\sum_{\scriptscriptstyle \alpha\in \Delta} h_\alpha(e)\bfs^\alpha,\quad \forall e\in E,\quad g_\alpha, h_\alpha \in \Hom_k(E,E),
$$
then the product $h g \in \Hom_k(E,E[[\bfs]]_\Delta)$ is given by:
\begin{equation} \label{eq:product-HomEE[[s]]}
 (hg)(e) = \sum_{\scriptscriptstyle \alpha\in \Delta} \left( \sum_{\scriptscriptstyle \beta + \gamma = \alpha} (h_\beta \pcirc g_\gamma)(e) \right) \bfs^\alpha.
\end{equation}

\begin{notacion} \label{notacion:pcirc}
We denote:
$$
\Hom_k^\pcirc(E,E[[\bfs]]_\Delta) := \left\{ f \in \Hom_k(E,E[[\bfs]]_\Delta)\ |\ f(e) \equiv e\!\!\!\!\mod \langle \bfs \rangle E[[\bfs]]_\Delta\ \forall e\in E \right\}, 
$$
\begin{eqnarray*}
&\Aut_{k[[\bfs]]_\Delta}^\pcirc(E[[\bfs]]_\Delta) 
:=
&
\\
& \left\{    
f \in \Aut_{k[[\bfs]]_\Delta}(E[[\bfs]]_\Delta)\ |\   f(e) \equiv e_0\!\!\!\!\mod \langle \bfs \rangle E[[\bfs]]_\Delta\ \forall e\in E[[\bfs]]_\Delta  \right\}.&
\end{eqnarray*}
Let us notice that a $f \in \Hom_k(E,E[[\bfs]]_\Delta)$, given by $f(e) = \sum_{\alpha\in \Delta} f_\alpha(e) \bfs^\alpha$, belongs to $\Hom_k^\pcirc(E,E[[\bfs]]_\Delta)$ if and only if $f_0=\Id_E$.
\end{notacion}

The isomorphism in (\ref{eq:comple_formal_1}) gives rise to a group isomorphism:
\begin{equation} \label{eq:U-iso-pcirc}
 r\in \U^p(\End_k(E);\Delta) \stackrel{\sim}{\longmapsto} \widetilde{r} \in \Aut_{k[[\bfs]]_\Delta}^\pcirc(E[[\bfs]]_\Delta) 
\end{equation}
and to a bijection:
\begin{equation} \label{eq:Aut-iso-pcirc} 
f\in \Aut_{k[[\bfs]]_\Delta}^\pcirc(E[[\bfs]]_\Delta) \stackrel{\sim}{\longmapsto} f|_E \in \Hom_k^\pcirc(E,E[[\bfs]]_\Delta).
\end{equation}
So, $\Hom_k^\pcirc(E,E[[\bfs]]_\Delta)$ is naturally a group with the product described in (\ref{eq:product-HomEE[[s]]}).
\bigskip

If $R$ is a (not necessarily commutative) $k$-algebra and $\Delta \subset \N^p$ is a co-ideal, any continuous $k$-linear map $h: k[[\bfs]]_\Delta \to k[[\bfs]]_\Delta$ induces a natural continuous left and right $R$-linear map:
$$ h_R:= \Id_R \widehat{\otimes}_k h:R[[\bfs]]_\Delta = R \widehat{\otimes}_k k[[\bfs]]_\Delta \longrightarrow R[[\bfs]]_\Delta = R \widehat{\otimes}_k k[[\bfs]]_\Delta 
$$
given by:
$$ h_R\left(\sum_\alpha r_\alpha \bfs^\alpha \right) = \sum_\alpha r_\alpha h(\bfs^\alpha).
$$
If $\fd:k[[\bfs]]_\Delta \to k[[\bfs]]_\Delta$ is $k$-derivation, it is continuous and $\fd_R: R[[\bfs]]_\Delta \to R[[\bfs]]_\Delta$ is a  $(R;R)$-linear derivation, i.e. 
$ \fd_R(sr)=s\fd_R(r)$, $\fd_R(rs)=\fd_R(r)s$, $\fd_R( r r') =  \fd_R ( r ) r' + r \fd_R( r')$ for all $s\in R$ and for all $r,r'\in R[[\bfs]]_\Delta$.
\medskip

The set of all $(R;R)$-linear derivations of $R[[\bfs]]_\Delta$ is a $k[[\bfs]]_\Delta$-Lie algebra and will be denoted by $\Der_R(R[[\bfs]]_\Delta)$. Moreover,
the map:
$$ \fd \in \Der_k(k[[\bfs]]_\Delta) \longmapsto \fd_R \in \Der_R(R[[\bfs]]_\Delta)
$$
is clearly a map of $k[[\bfs]]_\Delta$-Lie algebras.
\medskip

The following definition provides a particular family of $k$-derivations.

\begin{defi} \label{def:Euler-derivation}
For each $i=1,\dots,p$, 
the {\em $i$th partial Euler $k$-derivation}  is $\chi^i = s_i\frac{\partial}{\partial s_i}:k[[\bfs]] \to k[[\bfs]]$. It induces a $k$-derivation on each $k[[\bfs]]_\Delta$, which will be also denoted by $\chi^i$.
\medskip

\noindent 
The {\em Euler $k$-derivation} $\bchi:k[[\bfs]] \to k[[\bfs]]$
is defined as:
$$ \bchi = \sum_{i=1}^p \chi^i,\quad
\bchi\left( \sum_{\scriptscriptstyle \alpha} c_\alpha \bfs^\alpha \right) =  \sum_{\scriptscriptstyle \alpha} |\alpha| c_\alpha \bfs^\alpha.
$$
It induces a $k$-derivation on each $k[[\bfs]]_\Delta$, which will be also denoted by $\chi$.
\end{defi}

The proof of the following lemma is easy and it is left to the reader.

\begin{lemma} \label{lemma:carac-der-bfs}
Let $E$ be an $A$-module and
 $r= \sum_\beta r_\beta \bfs^\beta \in \Hom_k(A,E)[[\bfs]]_\Delta$ a formal power series with coefficients in $\Hom_k(A,E)$. The following properties are equivalent:
\begin{enumerate}
\item[(1)] $r \in \Der_k(A,E)[[\bfs]]_\Delta$.
\item[(2)] For any $a\in A[[\bfs]]_\Delta$ we have $[r,a] = \widetilde{r}(a)$.
\item[(3)] $\widetilde{r} \in \Der_{k[[\bfs]]_\Delta}(A[[\bfs]]_\Delta,E[[\bfs]]_\Delta)$.
\item[(4)] $\widetilde{r}|_A \in \Der_k(A,E[[\bfs]]_\Delta)$.
\end{enumerate}
\end{lemma}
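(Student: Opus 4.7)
The plan is to establish the chain $(1) \Leftrightarrow (4) \Leftrightarrow (3)$ and then $(1) \Leftrightarrow (2)$, since all four reduce to the Leibniz rule on the coefficients $r_\beta$. The first observation I would make is that, by the very definition in \ref{nume:widetilde-r}, $\widetilde{r}|_A(a) = \sum_\beta r_\beta(a)\bfs^\beta$ for $a\in A$; so $\widetilde{r}|_A$ is just the ``packaging'' of the family $\{r_\beta\}$ into a single $k$-linear map $A \to E[[\bfs]]_\Delta$.

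For $(1)\Leftrightarrow(4)$: writing out the Leibniz condition for $\widetilde{r}|_A:A\to E[[\bfs]]_\Delta$ (with the natural $A$-bimodule structure on the target) and comparing coefficients of $\bfs^\beta$ on both sides, one gets exactly the identity $r_\beta(ab) = a r_\beta(b) + r_\beta(a)b$ for every $\beta\in\Delta$. This is nothing but the assertion that each $r_\beta$ is a $k$-derivation. For $(4)\Leftrightarrow(3)$: the map $\widetilde{r}$ is $k[[\bfs]]_\Delta$-linear and continuous by construction (indeed, by 1) of \ref{nume:widetilde-r}, $\widetilde{r}$ is the unique $k[[\bfs]]_\Delta$-linear extension of $\widetilde{r}|_A$). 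Any $k[[\bfs]]_\Delta$-derivation restricts to a $k$-derivation on $A$, which gives $(3)\Rightarrow(4)$; conversely, assuming $(4)$, it suffices to verify the Leibniz rule on ``monomials'' $a\bfs^\alpha\cdot b\bfs^\beta$ with $a,b\in A$, and then to extend by $k[[\bfs]]_\Delta$-bilinearity and continuity of the product of $A[[\bfs]]_\Delta$. Here the commutativity of $A$ (so that the variables $\bfs$ commute past $A$-elements) is what makes the verification routine.

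For $(1)\Leftrightarrow(2)$: the point is to compute the commutator $[r,a]$ explicitly in the bimodule $\Hom_k(A,E)[[\bfs]]_\Delta$, using $(af)(x) = a f(x)$ and $(fa)(x) = f(ax)$. Evaluating at a test element $x\in A$ gives $[r,a](x) = \sum_\beta\bigl(r_\beta(ax) - a r_\beta(x)\bigr)\bfs^\beta$, while $\widetilde{r}(a)\cdot x = \sum_\beta r_\beta(a) x\, \bfs^\beta$ under the standard identification $E \hookrightarrow \Hom_k(A,E)$, $e\mapsto (x\mapsto ex)$ (extended to series). Matching these two series coefficient by coefficient yields $r_\beta(ax) = a r_\beta(x) + r_\beta(a)x$ for every $\beta$, i.e.\ $(1)$. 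Here one uses that $A$ is commutative to rewrite $r_\beta(a)x = x r_\beta(a)$ freely; once this is granted, the equivalence is formal. Passing from a single $a\in A$ to $a\in A[[\bfs]]_\Delta$ only requires $k[[\bfs]]_\Delta$-linearity of both sides of $[r,a] = \widetilde{r}(a)$, which is automatic.

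The calculations are all direct; the only subtlety I anticipate is keeping straight the three different ``$A$-module-like'' structures on $\Hom_k(A,E)[[\bfs]]_\Delta$, $E[[\bfs]]_\Delta$, and $\widetilde{\phantom{r}}$'s target, and using the natural embedding $E\hookrightarrow \Hom_k(A,E)$ to parse the equation in $(2)$ cleanly. Once that identification is fixed, the four conditions all reduce to the same pointwise Leibniz rule on each $r_\beta$.
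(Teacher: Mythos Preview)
Your argument is correct: all four conditions unwind to the coefficientwise Leibniz rule $r_\beta(ab)=a\,r_\beta(b)+r_\beta(a)\,b$, and your reductions via $k[[\bfs]]_\Delta$-linearity and continuity are exactly what is needed. The paper itself gives no proof of this lemma (it is stated as ``easy and left to the reader''), so there is nothing to compare; your identification of $E\hookrightarrow \Hom_k(A,E)$, $e\mapsto (x\mapsto xe)$, is precisely the one implicitly used when the lemma is invoked later (e.g.\ in the proof of Proposition~\ref{prop:exist-epsilon(D)} with $E=A$).
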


In particular, for each $r\in \Der_k(A)[[\bfs]]_\Delta $, we have that $\widetilde{r} \in  
\Der_{k[[\bfs]]_\Delta}(A[[\bfs]]_\Delta)$ (see \ref{nume:widetilde-r}) and that the $A[[\bfs]]_\Delta$-linear map
\begin{equation} \label{eq:Der-s-anchor}
 r\in \Der_k(A)[[\bfs]]_\Delta \longmapsto \widetilde{r} \in \Der_{k[[\bfs]]_\Delta}(A[[\bfs]]_\Delta)
\end{equation}
is an isomorphism of $A[[\bfs]]_\Delta$-modules. Moreover, 
$\Der_k(A)[[\bfs]]_\Delta$ is a Lie algebra over $k[[\bfs]]_\Delta$, where the Lie bracket of $\delta = \sum_\alpha \delta_\alpha \bfs^\alpha, \varepsilon = \sum_\alpha \varepsilon_\alpha \bfs^\alpha \in 
\Der_k(A)[[\bfs]]_\Delta$ is given by:
$$ [\delta,\varepsilon] = \delta\varepsilon - \varepsilon \delta = \sum_\alpha \left( \sum_{\scriptscriptstyle \beta + \gamma =\alpha} [\delta_\beta,\varepsilon_\gamma] \right) \bfs^\alpha,
$$
and the map (\ref{eq:Der-s-anchor}) is also an isomorphism of $k[[\bfs]]_\Delta$-Lie algebras. 

\begin{lemma} \label{lemma:leibniz-<>}
Let $\fd:k[[\bfs]]_\Delta \to k[[\bfs]]_\Delta$ be a $k$-derivation and $R=\End_k(A)$. Then, for each $r\in R[[\bfs]]_\Delta$ we have
$\widetilde{\fd_R(r)} = [\fd_A,\widetilde{r}]$.
\end{lemma}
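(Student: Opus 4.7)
The plan is to establish the identity by directly applying both sides to an arbitrary element $a=\sum_\nu a_\nu \bfs^\nu$ of $A[[\bfs]]_\Delta$ and checking that the resulting series coincide. I would write $r=\sum_\alpha r_\alpha\bfs^\alpha$ with $r_\alpha\in R$ and expand $\fd(\bfs^\alpha)=\sum_\mu c^\alpha_\mu\bfs^\mu$ with $c^\alpha_\mu\in k$. Since $k$ lies in the center of $R$, the coefficients $c^\alpha_\mu$ commute with every $r_\beta$, which will let me freely interchange scalar factors with endomorphism values.

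For the right-hand side, using the formula $\fd_A(\sum_\sigma b_\sigma\bfs^\sigma)=\sum_\sigma b_\sigma\fd(\bfs^\sigma)$ one computes
$$ \fd_A\bigl(\widetilde r(a)\bigr)=\sum_{\alpha,\nu} r_\alpha(a_\nu)\,\fd(\bfs^{\alpha+\nu}). $$
The key step is then to apply the Leibniz rule $\fd(\bfs^{\alpha+\nu})=\fd(\bfs^\alpha)\bfs^\nu+\bfs^\alpha\fd(\bfs^\nu)$, which splits this sum into two pieces. A short calculation shows that the second piece, $\sum_{\alpha,\nu} r_\alpha(a_\nu)\bfs^\alpha\fd(\bfs^\nu)$, coincides with $\widetilde r(\fd_A(a))$, so forming the commutator makes these terms cancel and leaves
$$ [\fd_A,\widetilde r](a)=\sum_{\alpha,\nu} r_\alpha(a_\nu)\,\fd(\bfs^\alpha)\,\bfs^\nu. $$

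For the left-hand side, centrality yields $\fd_R(r)=\sum_\alpha r_\alpha\fd(\bfs^\alpha)=\sum_\mu\bigl(\sum_\alpha c^\alpha_\mu r_\alpha\bigr)\bfs^\mu$, and then expanding $\widetilde{\fd_R(r)}(a)$ from the definition of $\widetilde{\cdot}$ in \ref{nume:widetilde-r} followed by the reindexing $\sigma=\mu+\nu$ reproduces exactly the expression obtained above. The only real obstacle is careful bookkeeping of multi-indices together with the centrality remark; once those are in hand the computation is mechanical. A more conceptual route would be to observe that both sides define continuous $k$-linear derivations $R[[\bfs]]_\Delta\to\End_{k[[\bfs]]_\Delta}(A[[\bfs]]_\Delta)$ (using that $\widetilde{\cdot}$ is a ring homomorphism for the left side and the standard commutator identity for the right), and to check equality on the topological generators $R\cup\{\bfs^\alpha\}$: on $c\in R$ both sides vanish (since $\fd_R(c)=0$ and $\widetilde c$ commutes coefficientwise with $\fd_A$), while on $\bfs^\alpha$ both sides agree with multiplication by $\fd(\bfs^\alpha)$ by the $k[[\bfs]]_\Delta$-derivation property of $\fd_A$, so a density argument would close the proof.
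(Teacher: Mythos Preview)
Your proposal is correct and follows essentially the same route as the paper: a direct computation exploiting the Leibniz rule $\fd(\bfs^\alpha\bfs^\nu)=\fd(\bfs^\alpha)\bfs^\nu+\bfs^\alpha\fd(\bfs^\nu)$. The paper's version is simply a compressed form of your main argument: rather than expanding general $r$ and $a$, it first invokes continuity (and $k$-bilinearity) to reduce to single monomials $r=r_\alpha\bfs^\alpha$, $a=a_\beta\bfs^\beta$, after which the verification is a one-line Leibniz split; your alternative ``conceptual'' route via topological generators is a valid reorganization but rests on the same ingredient.
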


\begin{proof} 
We have to prove that $ \fd_A \left( \langle r, a \rangle \right) = \langle \fd_R (r), a \rangle +  \langle r, \fd_A(a) \rangle$
for all $a\in A[[\bfs]]_\Delta$. 
By continuity, it is enough to prove the identity for $r=r_\alpha \bfs^\alpha$, $a=a_\beta \bfs^\beta$ with $\alpha,\beta\in\Delta$, $r_\alpha \in R$, $a_\beta\in A$:
\begin{eqnarray*}
&
\fd_A \left( \langle r, a \rangle \right) = \fd_A \left( \widetilde{r}(a) \right) =
\fd_A(r_\alpha(a_\beta) \bfs^\alpha \bfs^\beta) = 
r_\alpha(a_\beta) \fd (\bfs^\alpha) \bfs^\beta + r_\alpha(a_\beta) \bfs^\alpha \fd (\bfs^\beta )=&
\\
&
\widetilde{\fd_R(r)} (a) + \widetilde{r} (\fd_A(a))=\langle \fd_R (r), a \rangle +
 \langle r, \fd_A(a) \rangle.
\end{eqnarray*}
\end{proof}

\begin{defi} \label{defi:varepsilon-fd}
For any $k$-derivation $\fd:k[[\bfs]]_\Delta \to k[[\bfs]]_\Delta$ and any $r\in \U^p(R;\Delta)$ we define:
$$\varepsilon^\fd(r) :=r^* \fd_R(r),\quad \overline{\varepsilon}^\fd(r) :=\fd_R(r) r^*,
$$
and we will write:
$$ \varepsilon^\fd(r)= \sum_{\alpha} \varepsilon^\fd_\alpha(r) \bfs^\alpha,\quad 
 \overline{\varepsilon}^\fd(r)= \sum_{\alpha} \overline{\varepsilon}^\fd_\alpha(r) \bfs^\alpha.
$$
We will simply denote:
\begin{itemize}
\item[-)] $ \varepsilon^i(r) := \varepsilon^\fd(r)$, $\overline{\varepsilon}^i(r) :=  \overline{\varepsilon}^\fd(r)
$ if $\fd= \chi^i$ (the $i$th partial Euler derivation), $i=1,\dots,p$.
\item[-)]  $\varepsilon(r) := \varepsilon^\fd(r)$, $\overline{\varepsilon}(r) :=  \overline{\varepsilon}^\fd(r)$ if $\fd=\bchi$ is the Euler derivation.
\end{itemize}
\end{defi}

\noindent 
Observe that $\overline{\varepsilon}^\fd(r) = r \, \varepsilon^\fd(r) \, r^*$ and,
for any co-ideal $\Delta'\subset \Delta$, we have $\tau_{\Delta \Delta'} (\varepsilon^\fd(r))=
\varepsilon^\fd(\tau_{\Delta \Delta'} (r))$, $\tau_{\Delta \Delta'} (\overline{\varepsilon}^\fd(r))=
\overline{\varepsilon}^\fd(\tau_{\Delta \Delta'} (r))$. Moreover, if $E$ is an $A$-module and $R=\End_k(A)$, then
$$ \widetilde{\varepsilon^\fd(r)} = \widetilde{r}^{-1} [\fd_A,\widetilde{r}] = \widetilde{r}^{-1} \fd_A\widetilde{r} - \fd_A,\quad \widetilde{\overline{\varepsilon}^\fd(r)} = [\fd_A,\widetilde{r}] \widetilde{r}^{-1} = \fd_A - \widetilde{r} \fd_A\widetilde{r}^{-1}.
$$

The proof of the following lemma is straightforward.

\begin{lemma} For each $r\in \U^p_k(R;\Delta)$, the maps:
\begin{eqnarray*}
&
 \fd \in \Der_k(k[[\bfs]]_\Delta) \longmapsto \varepsilon^\fd(r) \in R[[\bfs]]_\Delta,\ \  
\fd \in \Der_k(k[[\bfs]]_\Delta) \longmapsto \overline{\varepsilon}^\fd(r) \in R[[\bfs]]_\Delta
\end{eqnarray*}
are $k[[\bfs]]_\Delta$-linear. 
\end{lemma}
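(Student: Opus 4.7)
The plan is to reduce everything to the observation that the map $\fd \mapsto \fd_R$ from $\Der_k(k[[\bfs]]_\Delta)$ to $(R;R)$-linear derivations of $R[[\bfs]]_\Delta$ is already $k[[\bfs]]_\Delta$-linear. Indeed, by construction $\fd_R = \Id_R \widehat{\otimes}_k \fd$, so on a series $r = \sum_\alpha r_\alpha \bfs^\alpha$ we have $\fd_R(r) = \sum_\alpha r_\alpha \fd(\bfs^\alpha)$, from which both additivity in $\fd$ and the identity $(c\fd)_R(r) = \sum_\alpha r_\alpha c \, \fd(\bfs^\alpha)$ are immediate for any $c \in k[[\bfs]]_\Delta$.

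From this I would conclude the statement in two short steps. First, additivity: for $\fd, \fd' \in \Der_k(k[[\bfs]]_\Delta)$,
\begin{equation*}
\varepsilon^{\fd+\fd'}(r) = r^* (\fd+\fd')_R(r) = r^* \fd_R(r) + r^* \fd'_R(r) = \varepsilon^\fd(r)+\varepsilon^{\fd'}(r),
\end{equation*}
and analogously $\overline{\varepsilon}^{\fd+\fd'}(r) = \overline{\varepsilon}^\fd(r)+\overline{\varepsilon}^{\fd'}(r)$. Second, homogeneity: since $R$ is a $k$-algebra, the subring $k[[\bfs]]_\Delta$ lies in the centre of $R[[\bfs]]_\Delta$, so any $c \in k[[\bfs]]_\Delta$ commutes with $r^*$, giving
\begin{equation*}
\varepsilon^{c\fd}(r) = r^* (c\fd)_R(r) = r^*\, c\, \fd_R(r) = c\, r^* \fd_R(r) = c\,\varepsilon^\fd(r),
\end{equation*}
and likewise $\overline{\varepsilon}^{c\fd}(r) = c\, \overline{\varepsilon}^\fd(r)$.

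There is essentially no obstacle: the proof amounts to unwinding the definitions and using the centrality of $k[[\bfs]]_\Delta$ in $R[[\bfs]]_\Delta$. The only point worth stating explicitly is the identity $(c\fd)_R = c\,\fd_R$, which could alternatively be phrased by saying that $\fd \mapsto \fd_R$ is the $R$-linear extension of a $k[[\bfs]]_\Delta$-linear assignment on a cofinal system of finite truncations, so the claim passes to the inverse limit via the compatibility $\tau_{\Delta\Delta'}(\varepsilon^\fd(r)) = \varepsilon^\fd(\tau_{\Delta\Delta'}(r))$ already recorded after Definition \ref{defi:varepsilon-fd}.
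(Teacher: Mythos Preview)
Your proof is correct and is precisely the straightforward verification the paper has in mind: the paper omits the argument entirely, simply stating that the proof is straightforward. Your final paragraph on truncations and inverse limits is superfluous here, since the identity $(c\fd)_R = c\,\fd_R$ already holds on the nose from the explicit formula $\fd_R(r) = \sum_\alpha r_\alpha \fd(\bfs^\alpha)$.
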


In particular:
$$ \varepsilon(r) = \sum_{i=1}^p \varepsilon^i(r),\ \ \overline{\varepsilon}(r) = \sum_{i=1}^p \overline{\varepsilon}^i(r).
$$

\begin{lemma} \label{lemma:first-properties-epsilon}
Let $\fd, \fd':k[[\bfs]]_\Delta \to k[[\bfs]]_\Delta$ be 
 $k$-derivations and $r,r'\in\U^p_k(R;\Delta)$. Then, the following identities hold:
\begin{enumerate}
\item[(i)] $\overline{\varepsilon}^\fd(1) = \varepsilon^\fd(1) =0$, $\varepsilon^\fd(r'\, r) = \varepsilon^\fd(r) + r^*\, \varepsilon^\fd(r')\, r$, 
$\overline{\varepsilon}^\fd(r\, r') = \overline{\varepsilon}^\fd(r) + r\, \overline{\varepsilon}^\fd(r')\, r^*$.
\item[(ii)] $\varepsilon^\fd(r^*) = -r\, \varepsilon^\fd(r)\, r^* = -\overline{\varepsilon}^\fd(r)$.
\item[(iii)] $\varepsilon^{[\fd,\fd']}(r) = \left[\varepsilon^\fd(r),\varepsilon^{\fd'}(r)\right] + \fd_R \left( \varepsilon^{\fd'}(r) \right) - \fd'_R \left( \varepsilon^\fd(r) \right) $.
\end{enumerate}
\end{lemma}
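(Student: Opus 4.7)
The plan is to derive all three identities by direct algebraic manipulation, using only that $\fd_R:R[[\bfs]]_\Delta\to R[[\bfs]]_\Delta$ is an $(R;R)$-linear derivation (as recalled just before Definition \ref{def:Euler-derivation}), together with the defining relations $\varepsilon^\fd(r)=r^*\fd_R(r)$ and $\overline{\varepsilon}^\fd(r)=\fd_R(r)r^*$, the fact that $r^*r=rr^*=1$, and (for part (iii)) the fact stated in the preceding text that $\fd\mapsto\fd_R$ is a map of $k[[\bfs]]_\Delta$-Lie algebras. No further structural input is needed.

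For part (i), the vanishing $\varepsilon^\fd(1)=\overline{\varepsilon}^\fd(1)=0$ is immediate since every derivation kills~$1$. For the first product formula I would expand $\varepsilon^\fd(r'r)=(r'r)^*\fd_R(r'r)=r^*(r')^*\bigl(\fd_R(r')\,r+r'\,\fd_R(r)\bigr)$, distribute, and simplify using $(r')^*r'=1$ to land on $r^*\varepsilon^\fd(r')\,r+\varepsilon^\fd(r)$. The companion formula for $\overline{\varepsilon}^\fd$ is strictly analogous, expanding $\fd_R(rr')(rr')^*$ instead and using $r'(r')^*=1$. Part (ii) then comes for free by specializing (i) with $r'=r^*$: from $0=\varepsilon^\fd(r^*r)=\varepsilon^\fd(r)+r^*\varepsilon^\fd(r^*)\,r$ we solve $\varepsilon^\fd(r^*)=-r\,\varepsilon^\fd(r)\,r^*$, and the identity $r\,\varepsilon^\fd(r)\,r^*=\overline{\varepsilon}^\fd(r)$ is immediate from $\fd_R(r)=r\cdot r^*\fd_R(r)=r\,\varepsilon^\fd(r)$.

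Part (iii) is the only non-trivial calculation. The idea is to differentiate the key auxiliary identity $\fd_R(r^*)=-r^*\fd_R(r)\,r^*=-\varepsilon^\fd(r)\,r^*$, obtained by applying $\fd_R$ to $r^*r=1$. Using Leibniz I would compute
$\fd_R(\varepsilon^{\fd'}(r))=\fd_R(r^*)\fd'_R(r)+r^*\fd_R\fd'_R(r)=-\varepsilon^\fd(r)\,\varepsilon^{\fd'}(r)+r^*\fd_R\fd'_R(r)$,
and symmetrically for $\fd'_R(\varepsilon^\fd(r))$. Subtracting the two, the higher-order terms reassemble into $r^*[\fd_R,\fd'_R](r)=r^*[\fd,\fd']_R(r)=\varepsilon^{[\fd,\fd']}(r)$ (using that $\fd\mapsto\fd_R$ respects brackets), while the remaining terms yield $-[\varepsilon^\fd(r),\varepsilon^{\fd'}(r)]$; rearranging gives (iii).

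No step presents a genuine obstacle: the only care required throughout is to respect the non-commutativity of $R[[\bfs]]_\Delta$ and keep the order of all products intact. I would present (i) in full, derive (ii) as a one-line consequence, and devote the bulk of the write-up to the bookkeeping in (iii).
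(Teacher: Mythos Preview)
Your proposal is correct and follows essentially the same approach as the paper, which merely remarks that (i) is straightforward and that (ii) and (iii) use the identity $\fd_R(r^*)=-r^*\,\fd_R(r)\,r^*$. Your derivation of (ii) from (i) by specializing $r'=r^*$ is a harmless variant of applying that identity directly, and your treatment of (iii) is exactly the computation the paper's hint points to.
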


\begin{proof}  The proof of (i) is straightforward. For (ii) and (iii) one uses that $\fd_R(r^*) = -r^*\,  \fd_R(r)\, r^*$.
\end{proof}
\medskip

\numero \label{nume:explicit_varepsilon} 
For each $r\in \U^p(R;\Delta)$ and each $i=1,\dots,p$ we have:
\begin{eqnarray*}
&\displaystyle 
\varepsilon^i(r)= r^* \chi^i_R(r) = \left(\sum_{\scriptscriptstyle \alpha} r^*_\alpha \bfs^\alpha\right) \left(\sum_{\scriptscriptstyle \alpha} \alpha_i r_\alpha \bfs^\alpha\right)= \sum_{\scriptscriptstyle \alpha} \left(\sum_{\scriptscriptstyle \beta+\gamma=\alpha} \gamma_i r^*_\beta \, r_\gamma \right) \bfs^\alpha,
&\\
&\displaystyle 
\varepsilon(r)= r^* \bchi_R(r) = \left(\sum_{\scriptscriptstyle \alpha} r^*_\alpha \bfs^\alpha\right) \left(\sum_{\scriptscriptstyle \alpha} |\alpha| r_\alpha \bfs^\alpha\right)= \sum_{\scriptscriptstyle \alpha} \left(\sum_{\scriptscriptstyle \beta+\gamma=\alpha} |\gamma| r^*_\beta \, r_\gamma \right) \bfs^\alpha,
\end{eqnarray*}
and so, by using Lemma \ref{lemma:unit-psr}, we obtain:
\begin{eqnarray*}
&\displaystyle 
\varepsilon^i(r)= \sum_{\substack{\scriptscriptstyle \alpha\in\Delta\\ \scriptscriptstyle \alpha_i>0}}
\left( 
\sum_{\scriptscriptstyle d=1}^{\scriptscriptstyle |\alpha|} (-1)^{d-1} 
\left( \sum_{\scriptscriptstyle \alpha^\bullet\in \Par (\alpha,d) }
\alpha^d_i r_{\alpha^1}\, \cdots \, r_{\alpha^d}
\right) \right) \bfs^\alpha,
&\\
&\displaystyle 
 \varepsilon(r)= \sum_{\substack{\scriptscriptstyle \alpha\in\Delta\\ \scriptscriptstyle |\alpha|>0}}
\left( 
\sum_{\scriptscriptstyle d=1}^{\scriptscriptstyle |\alpha|} (-1)^{d-1} 
\left( \sum_{\scriptscriptstyle \alpha^\bullet\in \Par (\alpha,d) }
|\alpha^d| r_{\alpha^1}\, \cdots \, r_{\alpha^d}
\right) \right) \bfs^\alpha.
\end{eqnarray*}
In a similar way we obtain:
\begin{eqnarray*}
&\displaystyle 
\overline{\varepsilon}^i(r)= \sum_{\substack{\scriptscriptstyle \alpha\in\Delta\\ \scriptscriptstyle \alpha_i>0}}
\left( 
\sum_{\scriptscriptstyle d=1}^{\scriptscriptstyle |\alpha|} (-1)^{d-1} 
\left( \sum_{\scriptscriptstyle \alpha^\bullet\in \Par (\alpha,d) }
\alpha^1_i r_{\alpha^1}\, \cdots \, r_{\alpha^d}
\right) \right) \bfs^\alpha,
&\\
&\displaystyle 
 \overline{\varepsilon}(r)= \sum_{\substack{\scriptscriptstyle \alpha\in\Delta\\ \scriptscriptstyle |\alpha|>0}}
\left( 
\sum_{\scriptscriptstyle d=1}^{\scriptscriptstyle |\alpha|} (-1)^{d-1} 
\left( \sum_{\scriptscriptstyle \alpha^\bullet\in \Par (\alpha,d) }
|\alpha^1| r_{\alpha^1}\, \cdots \, r_{\alpha^d}
\right) \right) \bfs^\alpha.
\end{eqnarray*}
In particular, we have $\varepsilon^i_\alpha(r) = \overline{\varepsilon}^i_\alpha(r) =0$ whenever $\alpha_i=0$, i.e. whenever $i\notin \supp \alpha$, and $\varepsilon_0(r) = \overline{\varepsilon}_0(r) =0$:
\begin{eqnarray*}
&\displaystyle 
 \varepsilon^i(r)= \sum_{\scriptscriptstyle i\in \supp \alpha} \varepsilon^i_\alpha(r) \bfs^\alpha,\quad 
 \overline{\varepsilon}^i(r)= \sum_{\scriptscriptstyle i\in \supp \alpha} \overline{\varepsilon}^i_\alpha(r) \bfs^\alpha,
 &\\
&\displaystyle 
\varepsilon(r)= \sum_{\scriptscriptstyle |\alpha|>0} \varepsilon_\alpha(r) \bfs^\alpha,\quad 
 \overline{\varepsilon}(r)= \sum_{\scriptscriptstyle |\alpha|>0} \overline{\varepsilon}_\alpha(r) \bfs^\alpha,
\end{eqnarray*}
and $\varepsilon^i(r), \overline{\varepsilon}^i(r), \varepsilon(r), \overline{\varepsilon}(r) \in R[[\bfs]]_{\Delta,+}$ (see (\ref{nume:M[[bfs]]Delta})). 
The following recursive identities hold:
\begin{eqnarray*}
&\displaystyle 
\alpha_i r_\alpha = \sum_{\substack{\scriptscriptstyle \beta + \gamma =\alpha\\ \scriptscriptstyle \gamma_i>0}} r_{\beta} \, \varepsilon^i_\gamma(r)= \sum_{\substack{\scriptscriptstyle \beta + \gamma =\alpha\\ \scriptscriptstyle \gamma_i>0}} \overline{\varepsilon}^i_\gamma(r) \, r_{\beta},
 &\\
&\displaystyle 
\varepsilon^i_\alpha(r)  = \alpha_i r_\alpha - \sum_{\substack{\scriptscriptstyle \beta + \gamma =\alpha \\ \scriptscriptstyle |\beta|,\gamma_i>0}} r_{\beta} \, \varepsilon^t_\gamma(r),\quad
\overline{\varepsilon}^t_\alpha(r)  = \alpha_t r_\alpha - \sum_{\substack{\scriptscriptstyle \beta + \gamma =\alpha \\ \scriptscriptstyle |\beta|,\gamma_i>0}}  \overline{\varepsilon}^i_\gamma(r) \, r_{\beta},
\end{eqnarray*}
for all $\alpha\in\Delta$ with $\alpha_i>0$, and:
\begin{equation} \label{eq:explicit-varepsilon}  
|\alpha| r_\alpha = \sum_{\substack{\scriptscriptstyle \beta + \gamma =\alpha\\ \scriptscriptstyle |\gamma|>0}} r_{\beta} \, \varepsilon_\gamma(r)= \sum_{\substack{\scriptscriptstyle \beta + \gamma =\alpha\\ \scriptscriptstyle |\gamma|>0}} \overline{\varepsilon}_\gamma(r) \, r_{\beta},
\end{equation}
\begin{equation*}
\varepsilon_\alpha(r)  = |\alpha| r_\alpha - \sum_{\substack{\scriptscriptstyle \beta + \gamma =\alpha \\ \scriptscriptstyle |\beta|,|\gamma|>0}} r_{\beta} \, \varepsilon_\gamma(r),\quad
\overline{\varepsilon}_\alpha(r)  = |\alpha| r_\alpha - \sum_{\substack{\scriptscriptstyle \beta + \gamma =\alpha \\ \scriptscriptstyle |\beta|,|\gamma|>0}}  \overline{\varepsilon}_\gamma(r) \, r_{\beta},
\end{equation*}
for all $\alpha\in\Delta$.

\begin{nota} After (\ref{eq:explicit-varepsilon}), our definition of $\overline{\varepsilon}$ generalizes the construction in \cite{Mirza_2010}. 
\end{nota}

\begin{lemma} \label{lemma:crossed-der-epsilon}
For any $r\in \U^p(R;\Delta)$ and any $i,j=1,\dots,p$ the following identity holds:
$$ \chi^j_R \left( \varepsilon^i(r) \right) -
\chi^i_R \left( \varepsilon^j(r) \right) = [\varepsilon^i(r),\varepsilon^j(r)].
$$
\end{lemma}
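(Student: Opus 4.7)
The plan is to deduce this identity directly from part (iii) of Lemma \ref{lemma:first-properties-epsilon}, which I have already at my disposal. That lemma states, for any pair of $k$-derivations $\fd,\fd'$ of $k[[\bfs]]_\Delta$ and any $r\in\U^p(R;\Delta)$, that
$$\varepsilon^{[\fd,\fd']}(r)=\left[\varepsilon^\fd(r),\varepsilon^{\fd'}(r)\right]+\fd_R\!\left(\varepsilon^{\fd'}(r)\right)-\fd'_R\!\left(\varepsilon^\fd(r)\right).$$
The strategy is to specialize this to $\fd=\chi^j$ and $\fd'=\chi^i$, observing that the partial Euler derivations commute: since $\chi^i=s_i\partial/\partial s_i$ and $\chi^j=s_j\partial/\partial s_j$ on $k[[\bfs]]$, a direct check (on monomials $\bfs^\alpha$, each of which is an eigenvector with eigenvalues $\alpha_i$ and $\alpha_j$) shows $[\chi^i,\chi^j]=0$ on $k[[\bfs]]$, hence on the quotient $k[[\bfs]]_\Delta$ as well.

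With $[\chi^j,\chi^i]=0$, the left-hand side of the specialized identity vanishes (here one uses the $k[[\bfs]]_\Delta$-linearity of $\fd\mapsto\varepsilon^\fd(r)$ established in the lemma preceding \ref{lemma:first-properties-epsilon}, so that $\varepsilon^{0}(r)=0$). Therefore
$$0=\left[\varepsilon^j(r),\varepsilon^i(r)\right]+\chi^j_R\!\left(\varepsilon^i(r)\right)-\chi^i_R\!\left(\varepsilon^j(r)\right),$$
and rearranging, together with the antisymmetry of the bracket, yields exactly
$$\chi^j_R\!\left(\varepsilon^i(r)\right)-\chi^i_R\!\left(\varepsilon^j(r)\right)=\left[\varepsilon^i(r),\varepsilon^j(r)\right].$$

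There is no real obstacle: the heavy lifting was done in Lemma \ref{lemma:first-properties-epsilon}(iii), whose proof in turn relies only on the Leibniz rule for $\fd_R$ applied to $r^*$. The only tiny subtlety worth flagging explicitly in the write-up is the vanishing $[\chi^i,\chi^j]=0$, which should be stated (and, if desired, checked on a general monomial) before invoking the lemma.
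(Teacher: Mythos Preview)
Your proof is correct and follows exactly the same approach as the paper: the paper's proof is the single sentence ``Since $[\chi^i,\chi^j]=0$, it is a consequence of Lemma \ref{lemma:first-properties-epsilon}, (iii).'' Your write-up simply makes the specialization and the rearrangement explicit.
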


\begin{proof} Since $[\chi^i,\chi^j]=0$, it is a consequence of Lemma
\ref{lemma:first-properties-epsilon}, (iii).
\end{proof}

\begin{notacion} \label{notacion:Lambda-bepsilon}
Under the above conditions, we will denote by $ \HH^p(R;\Delta)$ the subset of $\left( R[[\bfs]]_{\Delta,+} \right)^p$ whose elements are the families $\{\delta^i\}_{1\leq i\leq p}$ satisfying the following properties:
\begin{itemize}
\item[(a)] If $\delta^i = \sum_{|\alpha|>0} \delta^i_\alpha \bfs^\alpha$, we have $\delta^i_\alpha=0$ whenever $\alpha_i=0$.
\item[(b)] For all $i,j=1,\dots,p$ we have $\chi^j_R \left( \delta^i \right) -
\chi^i_R \left( \delta^j \right) = [\delta^i,\delta^j]$.
\end{itemize}
Let us notice that property (b) may be explicitly written as:
\begin{equation} \label{eq:crossed-cond}
\alpha_j \delta^i_\alpha - \alpha_i \delta^j_\alpha = \sum_{\substack{\scriptscriptstyle \beta+\gamma=\alpha\\ \scriptscriptstyle \beta_i, \gamma_j>0 }} [\delta^i_\beta,\delta^j_\gamma]
\end{equation}
for all $i,j=1,\dots,p$ and for all $\alpha\in\Delta$ with $\alpha_i, \alpha_u>0$. 
Let us also consider the map:
$$ \bSigma: \{\delta^i\} \in \HH^p(R;\Delta) \longmapsto \sum_{i=1}^p \delta^i \in R[[\bfs]]_{\Delta,+}.
$$
After Lemma \ref{lemma:crossed-der-epsilon}, we can consider the map:
$$ \bepsilon: D \in \U^p(R;\Delta) \longmapsto \{ \varepsilon^i(r)\}_{1\leq i\leq p} \in \HH^p(R;\Delta)
$$
and we obviously have $\varepsilon = \bSigma \pcirc \bepsilon$.
\end{notacion}

\begin{prop} \label{prop:varepsilon-bijective-Q}
Assume that $\QQ \subset k$. Then, the three maps in the following commutative diagram:
$$
\begin{tikzcd}
\U^p(R;\Delta)   
 \ar[]{r}{\bepsilon} \ar{dr}{\varepsilon } & \HH^p(R;\Delta) \ar[]{d}{\bSigma} 
 \\
 & R[[\bfs]]_{\Delta,+}
\end{tikzcd}
$$
are bijective.
\end{prop}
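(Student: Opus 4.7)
The strategy is to exploit the commutativity $\varepsilon = \bSigma \pcirc \bepsilon$: if I can establish that $\varepsilon$ and $\bSigma$ are bijective, then $\bepsilon = \bSigma^{-1} \pcirc \varepsilon$ is automatically bijective. I would begin with $\varepsilon$, which is the easy one. The recursive identity $|\alpha|\, r_\alpha = \sum_{\beta+\gamma=\alpha,\, |\gamma|>0} r_\beta\, \varepsilon_\gamma(r)$ from (\ref{eq:explicit-varepsilon}) determines $r_\alpha$ uniquely for $|\alpha|>0$ from $r_0 = 1$ and $\varepsilon(r)$, since $|\alpha|$ is invertible when $\QQ \subset k$; this gives injectivity. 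Symmetrically, given $\delta = \sum_{|\alpha|>0} \delta_\alpha \bfs^\alpha \in R[[\bfs]]_{\Delta,+}$, setting $r_0=1$ and recursively $r_\alpha = |\alpha|^{-1} \sum_{\beta+\gamma=\alpha,\, |\gamma|>0} r_\beta \delta_\gamma$ produces an $r \in \U^p(R;\Delta)$ with $\bchi_R(r) = r\delta$, hence $\varepsilon(r) = r^* \bchi_R(r) = \delta$, so $\varepsilon$ is also surjective.

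For $\bSigma$, surjectivity comes for free from the previous step: given $D \in R[[\bfs]]_{\Delta,+}$, choose $r$ with $\varepsilon(r) = D$; by Lemma \ref{lemma:crossed-der-epsilon} we have $\bepsilon(r) \in \HH^p(R;\Delta)$, and $\bSigma(\bepsilon(r)) = \varepsilon(r) = D$. The crux is injectivity. Given $\{\delta^i\} \in \HH^p(R;\Delta)$ with $D = \bSigma(\{\delta^i\})$, I would sum the crossed relation (\ref{eq:crossed-cond}) over $j = 1,\dots,p$. The left-hand side telescopes to $|\alpha|\, \delta^i_\alpha - \alpha_i D_\alpha$. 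The right-hand side becomes $\sum_{\beta+\gamma=\alpha,\, \beta_i>0}\,[\delta^i_\beta,\, \sum_{j:\,\gamma_j>0}\delta^j_\gamma]$, and property (a) (i.e.\ $\delta^j_\gamma=0$ when $\gamma_j=0$) lets me replace $\sum_{j:\,\gamma_j>0}\delta^j_\gamma$ by $\sum_j \delta^j_\gamma = D_\gamma$. Dividing by $|\alpha|$ (legitimate since $\QQ\subset k$) yields
\[
\delta^i_\alpha = \frac{1}{|\alpha|}\left( \alpha_i D_\alpha + \sum_{\substack{\beta+\gamma=\alpha \\ \beta_i>0,\, |\gamma|>0}} [\delta^i_\beta,\, D_\gamma] \right),
\]
in which every $\delta^i_\beta$ on the right satisfies $|\beta|<|\alpha|$. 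Thus the family $\{\delta^i\}$ is uniquely recovered from $D$ by induction on $|\alpha|$, proving that $\bSigma$ is injective.

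The main obstacle is precisely this injectivity step: without the integrability condition (b) built into the definition of $\HH^p(R;\Delta)$, there is no reason to expect $\bSigma$ to be injective, and the summation trick collapses. What makes it work is that, after summing over $j$, the constraints $\beta_i>0$ and $\gamma_j>0$ from (\ref{eq:crossed-cond}) combine with (a) so that only the combination $D_\gamma = \sum_j \delta^j_\gamma$ appears on the right-hand side, eliminating the individual $\delta^j_\gamma$ for $j\ne i$. Once both $\varepsilon$ and $\bSigma$ are shown to be bijective, the factorization $\varepsilon = \bSigma \pcirc \bepsilon$ completes the argument for $\bepsilon$.
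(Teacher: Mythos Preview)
Your argument is correct, and for the bijectivity of $\varepsilon$ it coincides with the paper's proof. The genuine difference lies in how you establish the injectivity of $\bSigma$. The paper argues as follows: given $\{\delta^i\},\{\eta^i\}\in\HH^p(R;\Delta)$ with equal sum, one shows by induction on $|\alpha|$ that the crossed relations (\ref{eq:crossed-cond}) force $\alpha_j\delta^i_\alpha-\alpha_i\delta^j_\alpha=\alpha_j\eta^i_\alpha-\alpha_i\eta^j_\alpha$ for all $i,j\in\supp\alpha$, and then observes that these equations together with $\sum_{i\in\supp\alpha}\delta^i_\alpha=\sum_{i\in\supp\alpha}\eta^i_\alpha$ form a non-singular linear system over $\QQ$. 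Your route is more direct: summing (\ref{eq:crossed-cond}) over $j$ and using property~(a) to replace $\sum_{j:\gamma_j>0}\delta^j_\gamma$ by $D_\gamma$, you obtain the closed recursion
\[
\delta^i_\alpha=\frac{1}{|\alpha|}\Bigl(\alpha_i D_\alpha+\sum_{\substack{\beta+\gamma=\alpha\\ \beta_i>0,\ |\gamma|>0}}[\delta^i_\beta,D_\gamma]\Bigr),
\]
which explicitly reconstructs each $\delta^i$ from $D=\bSigma(\{\delta^i\})$. This buys you a concrete formula for $\bSigma^{-1}$ rather than an abstract non-singularity argument, at no extra cost; the paper's version, on the other hand, makes it slightly more transparent that the only obstruction to inverting $\bSigma$ is the invertibility of the integers $|\alpha|$.
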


\begin{proof} The injectivity of $\varepsilon$ is a straightforward consequence of (\ref{eq:explicit-varepsilon}).
Let us prove the surjectivity of $\varepsilon$.
Let $\overline{r} = \sum_\alpha \overline{r}_\alpha \bfs^\alpha$ be any element in $R[[\bfs]]_{\Delta,+}$. Since $\QQ \subset k$, the differential equation
$$ \bchi (Y) = Y \overline{r},\quad Y\in R[[\bfs]]_\Delta,
$$ 
has a unique solution $r\in R[[\bfs]]_\Delta$ with initial condition $r_0 = 1$, i.e. $r\in \U^p(R;\Delta)$.
It is given recursively by:
$$ |\alpha| r_\alpha = \sum_{\substack{\scriptscriptstyle \beta + \gamma=\alpha\\ \scriptscriptstyle  |\gamma|>0}} r_{\beta} \, \overline{r}_\gamma,\quad \alpha\in\Delta, |\alpha|>0,
$$
and so $\varepsilon(r)=\overline{r}$. 
To finish, the only missing point is the injectivity of $\bSigma$. Let $\{\delta^i\}, \{\eta^i\}\in \HH^p(R;\Delta)$ be with $\sum_i \delta^i = \sum_i \eta^i$. It is clear that $\delta^i_\alpha = \eta^i_\alpha$ whenever $|\alpha|=1$. Assume that $\delta^i_\beta = \eta^i_\beta$  for all $i=1,\dots,p$ whenever $|\beta|< m$ and consider $\alpha\in\Delta$ with $|\alpha|=m$. By using (\ref{eq:crossed-cond}) and the induction hypothesis we obtain:
\begin{equation*}
\alpha_j \delta^i_\alpha - \alpha_i \delta^j_\alpha = \sum_{\substack{\scriptscriptstyle \beta+\gamma=\alpha\\ \scriptscriptstyle \beta_i, \gamma_j>0 }} [\delta^i_\beta,\delta^j_\gamma] =
\sum_{\substack{\scriptscriptstyle \beta+\gamma=\alpha\\ \scriptscriptstyle \beta_i, \gamma_j>0 }} [\eta^i_\beta,\eta^j_\gamma] = \alpha_j \eta^i_\alpha - \alpha_i \eta^j_\alpha\quad \forall i,j\in \supp \alpha.
\end{equation*}
The above system of linear equations with rational coefficients joint with the linear equation:
\begin{eqnarray*}
&\displaystyle \sum_{\scriptscriptstyle i\in \supp \alpha} \delta^i_\alpha = \sum_{\scriptscriptstyle i\in \supp \alpha} \eta^i_\alpha,
\end{eqnarray*}
gives rise to a non singular system and we deduce that $\delta^i_\alpha = \eta^i_\alpha$ for all $i\in \supp \alpha$, and so $\delta^i_\alpha = \eta^i_\alpha$ for all $i=1,\dots,p$.
\end{proof}

Notice that Lemma \ref{lemma:crossed-der-epsilon} and Proposition \ref{prop:varepsilon-bijective-Q} can also be stated with the $\overline{\varepsilon}^i$ and $\overline{\varepsilon}$ instead of the $\varepsilon^i$ and $\varepsilon$.

\subsection{Substitution maps}

In this section we give a summary of sections 2 and 3 of \cite{nar_subst_2018}. Let $k$ be a commutative ring, $A$ a commutative $k$-algebra, $\bfs=\{s_1,\dots,s_p\},\bft=\{t_1,\dots,t_q\}$ two sets of variables and
$\Delta\subset \N^p, \nabla\subset \N^q$ non-empty co-ideals.
\medskip

\begin{defi}  \label{def:substitution_maps} 
An $A$-algebra map $\varphi:A[[\bfs]]_\Delta \xrightarrow{} A[[\bft]]_\nabla$
will be called a {\em substitution map} whenever $\ord (\varphi(s_i)) \geq 1$ for all $i=1,\dots, p$.
A such map is continuous and uniquely determined by the family $c=\{\varphi(s_i), i=1,\dots, p\}$.
\medskip

The {\em trivial} substitution map  $A[[\bfs]]_\Delta \xrightarrow{} A[[\bft]]_\nabla $ is the one sending any $s_i$ to $0$. It will be denoted by $\mathbf{0}$. 
\end{defi}

The composition of substitution maps is obviously a substitution map. Any substitution map $\varphi:A[[\bfs]]_\Delta \xrightarrow{} A[[\bft]]_\nabla$ determines and is determined by a family:
$$\left\{{\bf C}_e(\varphi,\alpha), e\in \nabla, \alpha\in\Delta, |\alpha|\leq |e|  \right\}\subset A,\quad \text{with\ }\ {\bf C}_0(\varphi,0)=1,
$$
such that:
\begin{equation} \label{eq:exp-substi}
\varphi\left( \sum_{\scriptscriptstyle\alpha\in\Delta} a_\alpha \bfs^\alpha \right) =
\sum_{\scriptscriptstyle e\in \nabla}
\left(  \sum_{\substack{\scriptscriptstyle \alpha\in\Delta\\ \scriptscriptstyle  |\alpha|\leq |e| }} {\bf C}_e(\varphi,\alpha) a_\alpha \right) \bft^e.
\end{equation}
In section 3, 2., of \cite{nar_subst_2018} the reader can find the explicit expression of the ${\bf C}_e(\varphi,\alpha)$ in terms of the $\varphi(s_i)$. 
The following lemma is clear.

\begin{lemma} \label{lemma:truncations-are-substitutions}
If $\Delta' \subset \Delta\subset \N^p$ are non-empty co-ideals, the truncation 
$\tau_{\Delta \Delta'} :  A[[\bfs]]_{\Delta} \to A[[\bfs]]_{\Delta'}$ is clearly a substitution map and 
${\bf C}_\beta\left(\tau_{\Delta \Delta'},\alpha\right) = \delta_{\alpha \beta}$ for all $\alpha\in\Delta$ and for all $\beta\in\Delta'$ with $|\alpha|\leq |\beta|$.
\end{lemma}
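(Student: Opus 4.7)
The plan is short because the statement is essentially an unwinding of definitions, but let me lay out the two things to check and how to see them.

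First, I would verify that $\tau_{\Delta \Delta'}$ is a substitution map in the sense of Definition \ref{def:substitution_maps}. The observation in \ref{nume:M[[bfs]]Delta} already tells us that truncations between co-ideal quotients are ring homomorphisms; they act as the identity on the constant term, so they are in particular $A$-algebra maps. For the order condition, I would note that $\tau_{\Delta \Delta'}(s_i)$ equals $s_i$ if the $i$-th standard basis vector $e_i$ lies in $\Delta'$ and equals $0$ otherwise. In both cases $\ord \bigl(\tau_{\Delta \Delta'}(s_i)\bigr)\geq 1$, so $\tau_{\Delta \Delta'}$ qualifies as a substitution map.

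Next, I would compute the coefficients $\mathbf{C}_\beta(\tau_{\Delta \Delta'},\alpha)$ by directly comparing the definition of $\tau_{\Delta \Delta'}$ with the defining expansion (\ref{eq:exp-substi}). On one hand, by definition,
$$
\tau_{\Delta \Delta'}\!\left( \sum_{\alpha\in\Delta} a_\alpha \bfs^\alpha \right)
= \sum_{\beta\in\Delta'} a_\beta \bfs^\beta,
$$
and on the other hand, since here the target variables $\bft$ coincide with $\bfs$, formula (\ref{eq:exp-substi}) reads
$$
\tau_{\Delta \Delta'}\!\left( \sum_{\alpha\in\Delta} a_\alpha \bfs^\alpha \right)
= \sum_{\beta\in\Delta'} \left( \sum_{\substack{\alpha\in\Delta \\ |\alpha|\leq |\beta|}} \mathbf{C}_\beta(\tau_{\Delta \Delta'},\alpha)\, a_\alpha \right) \bfs^\beta.
$$
Equating the two expressions for every choice of coefficients $\{a_\alpha\}_{\alpha\in\Delta}\subset A$ and identifying the coefficient of $\bfs^\beta$ for each $\beta\in\Delta'$ gives
$$
\sum_{\substack{\alpha\in\Delta \\ |\alpha|\leq |\beta|}} \mathbf{C}_\beta(\tau_{\Delta \Delta'},\alpha)\, a_\alpha = a_\beta
$$
as a linear identity in the indeterminate coefficients, and hence $\mathbf{C}_\beta(\tau_{\Delta \Delta'},\alpha)=\delta_{\alpha\beta}$, as claimed.

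There is no real obstacle here: the only subtlety is making sure that the orders of $\tau_{\Delta\Delta'}(s_i)$ are at least $1$ even when $e_i\notin\Delta'$ (handled by the convention $\ord(0)=+\infty$), and that one is entitled to read off the coefficients ${\bf C}_\beta$ by varying the $a_\alpha$ freely, which is legitimate because both sides of (\ref{eq:exp-substi}) are $A$-linear in $(a_\alpha)_{\alpha\in\Delta}$.
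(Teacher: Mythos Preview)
Your argument is correct and is exactly the kind of unwinding the paper has in mind: the paper gives no proof beyond the sentence ``The following lemma is clear,'' so your write-up simply fills in the details the author left implicit.
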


\begin{defi} We say that a substitution map $\varphi: A[[\bfs]]_\Delta \xrightarrow{} A[[\bft]]_\nabla $ has {\em constant coefficients} 
if $\varphi(s_i) \in k[[\bft]]_\nabla$ for all $i=1,\dots,p$.
This is equivalent to saying that
 ${\bf C}_e(\varphi,\alpha)\in k$ for all $e\in\nabla$ and for all $\alpha\in \Delta$ with $|\alpha|\leq |e|$. Substitution maps which constant coefficients are induced by substitution maps $k[[\bfs]]_\Delta \xrightarrow{} k[[\bft]]_\nabla$.
\end{defi}
\medskip

\numero  \label{sec:action-substi}
Let
$M$ be an $(A;A)$-bimodule.
\medskip

Any substitution map map $\varphi: A[[\bfs]]_\Delta \to A[[\bft]]_\nabla$ induces 
$(A;A)$-linear maps:  
$$\varphi_M := \varphi \widehat{\otimes} \Id_M : M[[\bfs]]_\Delta \equiv A[[\bfs]]_\Delta \widehat{\otimes}_A M \longrightarrow M[[\bft]]_\nabla \equiv A[[\bft]]_\nabla \widehat{\otimes}_A M
$$ 
and 
$$\sideset{_M}{}\opvarphi := \Id_M \widehat{\otimes} \varphi  : M[[\bfs]]_\Delta \equiv M \widehat{\otimes}_A A[[\bfs]]_\Delta \longrightarrow M[[\bft]]_\nabla \equiv M \widehat{\otimes}_A A[[\bft]]_\nabla.
$$
We have:
\begin{eqnarray*}
&\displaystyle \varphi_M \left(\sum_{ \scriptscriptstyle \alpha\in\Delta} m_\alpha \bfs^\alpha \right) = \sum_{\scriptscriptstyle \alpha\in\Delta} \varphi( \bfs^\alpha ) m_\alpha = \sum_{ \scriptscriptstyle e\in \nabla}   \left( \sum_{\substack{\scriptscriptstyle \alpha\in\Delta\\ \scriptscriptstyle  |\alpha|\leq |e| } }
{\bf C}_e(\varphi,\alpha) m_\alpha \right) \bft^e,
&\\
&\displaystyle 
\sideset{_M}{}\opvarphi \left(\sum_{ \scriptscriptstyle \alpha\in\Delta} m_\alpha \bfs^\alpha \right) = \sum_{\scriptscriptstyle \alpha\in\Delta}  m_\alpha \varphi( \bfs^\alpha ) = \sum_{ \scriptscriptstyle e\in\nabla }   \left( \sum_{ \substack{\scriptscriptstyle \alpha\in\Delta\\ \scriptscriptstyle  |\alpha|\leq |e| } } 
m_\alpha {\bf C}_e(\varphi,\alpha)  \right) \bft^e&
\end{eqnarray*}
for all $m\in M[[\bfs]]_\Delta$. 
If $M$ is a trivial bimodule, then $\varphi_M = \sideset{_M}{}\opvarphi$. 
If $\varphi': A[[\bft]]_\nabla \to A[[\bfu]]_\Omega$ is another substitution map and $\opvarphi'' = \opvarphi\pcirc \opvarphi'$, we have
$\opvarphi''_M = \varphi_M \pcirc \varphi'_M$, $\sideset{_M}{}\opvarphi'' = \sideset{_M}{}\opvarphi \pcirc \sideset{_M}{}\opvarphi'$.
\bigskip

For all $m\in M[[\bfs]]_\Delta$ and all $a \in A[[\bfs]]_\nabla$, we have:
$$ \varphi_M (a m) = \varphi(a) \varphi_M(m),\ \sideset{_M}{}\opvarphi (m a) =  \sideset{_M}{}\opvarphi (m) \varphi(a),
$$
i.e. $\varphi_M$ is $(\varphi;A)$-linear and $\sideset{_M}{}\opvarphi$ is $(A;\varphi)$-linear. Moreover, $\varphi_M$ and $\sideset{_M}{}\opvarphi$ are compatible with the augmentations, i.e.:
\begin{equation} \label{eq:compat-augment}
\varphi_M(m) \equiv m_0\!\!\!\!\mod \langle \bft \rangle M[[\bft]]_\nabla,\ 
\sideset{_M}{}\opvarphi(m) \equiv m_0\!\!\!\!\mod \langle \bft \rangle M[[\bft]]_\nabla,\  m\in M[[\bfs]]_\Delta.
\end{equation}
If $\varphi$ is the trivial substitution map (i.e. $\varphi(s_i)=0$ for all $s_i\in\bfs$), then $\varphi_M : M[[\bfs]]_\Delta \to M[[\bft]]_\nabla$ and $\sideset{_M}{}\opvarphi: M[[\bfs]]_\Delta \to M[[\bft]]_\nabla$ are also trivial, i.e.
$ \varphi_M (m) = \sideset{_M}{}\opvarphi
(m) = m_0$, for all $m\in M[[\bfs]]_\nabla$.
\bigskip

\numero \label{nume:def-sbullet}
The above constructions apply in particular to the case of any $k$-algebra $R$ over $A$, for which we have two induced continuous maps:
$\varphi_R= \varphi \widehat{\otimes} \Id_R : R[[\bfs]]_\Delta \to R[[\bft]]_\nabla$, which is $(A;R)$-linear, and  $\sideset{_R}{}\opvarphi= \Id_R \widehat{\otimes} \varphi  : R[[\bfs]]_\Delta \to R[[\bft]]_\nabla$, which is $(R;A)$-linear. 
For $r\in R[[\bfs]]_\Delta$ we will denote
$ \varphi \sbullet r := \varphi_R(r)$, $r \sbullet \varphi := \sideset{_R}{}\opvarphi(r)$. 
Explicitly, if $r=\sum_\alpha
r_\alpha \bfs^\alpha$ with $\alpha\in\Delta$, then:
\begin{equation} \label{eq:explicit-bullet}
\varphi \sbullet r
 = 
\sum_{\scriptscriptstyle e \in\nabla}
\left( \sum_{\substack{\scriptscriptstyle \alpha\in\Delta\\ \scriptscriptstyle  |\alpha|\leq |e|}} {\bf C}_e(\varphi,\alpha) r_\alpha \right) \bft^e,\quad
 r \sbullet \varphi 
 = 
\sum_{\scriptscriptstyle e \in\nabla}
\left( \sum_{\substack{\scriptscriptstyle \alpha\in\Delta\\ \scriptscriptstyle  |\alpha|\leq |e|}} r_\alpha {\bf C}_e(\varphi,\alpha)  \right) \bft^e.
\end{equation}
From (\ref{eq:compat-augment}), we deduce that:
$$\varphi \sbullet \U^p(R;\Delta) \subset \U^q(R;\nabla),\quad \U^p(R;\Delta)\sbullet \varphi \subset \U^q(R;\nabla),
$$ 
and $\varphi \sbullet 1 = 1 \sbullet \varphi = 1$. 
\medskip

\noindent
If $\varphi$ is a substitution map with constant coefficients, then $\varphi_R = \sideset{_R}{}\opvarphi$ is a ring homomorphism over $\varphi$. 
In particular, $\varphi \sbullet r = r \sbullet \varphi$ and $\varphi \sbullet (rr') = (\varphi \sbullet r) (\varphi \sbullet r')$.
\medskip

\noindent 
If $\varphi = \mathbf{0}: A[[\bfs]]_\Delta \to A[[\bft]]_\nabla$ is the trivial substitution map, then 
$\mathbf{0} \sbullet r = r\sbullet \mathbf{0} = r_0$ for all $r\in R[[\bfs]]_\Delta$. In particular,
$\mathbf{0} \sbullet r = r \sbullet \mathbf{0} = 1$ for all $r\in \U^p(R;\Delta)$.
\medskip

\noindent 
If $\bfu=\{u_1,\dots,u_r\}$ is another set of variables, $\Omega \subset \N^r$ is a non-empty co-ideal and
$\psi:R[[\bft]]_\nabla \to R[[\bfu]]_\Omega$ is another substitution map, one has:
$$ \psi \sbullet (\varphi \sbullet r) = (\psi \pcirc \varphi) \sbullet r,\quad
(r \sbullet \varphi) \sbullet \psi  =  r \sbullet (\psi \pcirc \varphi).
$$
Since $\left(R[[\bfs]]_\Delta\right)^{\text{opp}} = R^{\text{opp}}[[\bfs]]_\Delta$, for any substitution map $\varphi: A[[\bfs]]_\Delta \to A[[\bft]]_\nabla$  we have $\left( \varphi_R \right)^{\text{opp}} = \sideset{_{R^{\text{opp}}}}{}\opvarphi$ and 
$\left( \sideset{_R}{}\opvarphi \right)^{\text{opp}} = \varphi_{R^{\text{opp}}}$. 
\medskip

For each substitution map $\varphi: A[[\bfs]]_\Delta \to A[[\bft]]_\nabla$ we define the $(A;A)$-linear map:
$$ \varphi_*: f\in \Hom_k(A,A[[\bfs]]_\Delta) \longmapsto \varphi_*(f)= \varphi \pcirc f \in \Hom_k(A,A[[\bft]]_\nabla)
$$
which induces another one $\overline{\varphi_*}: \End_{k[[\bfs]]_\Delta}(A[[\bfs]]_\Delta) \longrightarrow
\End_{k[[\bft]]_\nabla}(A[[\bft]]_\nabla)$ given by:

$$ \overline{\varphi_*}(f):=
\left(\varphi_*\left(f|_A\right)\right)^e =
\left(\varphi \pcirc f|_A\right)^e\quad \forall f\in \End_{k[[\bfs]]_\Delta}(A[[\bfs]]_\Delta).
$$
More generally, for any left $A$-modules $E,F$ we have $(A;A)$-linear maps:
\begin{eqnarray*}
&\displaystyle
 (\varphi_{F})_*: f\in \Hom_k(E,F[[\bfs]]_\Delta) \longmapsto (\varphi_{F})_*(f)= \varphi_F \pcirc f \in \Hom_k(E,F[[\bft]]_\nabla),
&\\
&\displaystyle
\overline{(\varphi_F)_*}:  \Hom_{k[[\bfs]]_\Delta}(E[[\bfs]]_\Delta,F[[\bfs]]_\Delta) \longrightarrow  \Hom_{k[[\bft]]_\nabla}(E[[\bft]]_\nabla,F[[\bft]]_\nabla),
&\\
&\displaystyle
 \overline{(\varphi_F)_*}(f):=\left(\varphi_F \pcirc f|_E\right)^e.
\end{eqnarray*}
Let us consider the $(A;A)$-bimodule $M=\Hom_k(E,F)$. For each $m\in M[[\bfs]]_\Delta$ and for each $e\in E$ we have
$ \widetilde{\varphi_M(m)}(e) = \varphi_F\left(\widetilde{m}(e) \right)$, i.e.:
\begin{equation}   \label{eq:tilde-varphi-M-varphi-F}
\widetilde{\varphi_M(m)}|_E = \varphi_F \pcirc \left(\widetilde{m}|_E\right),
\end{equation}
or more graphically, the following diagram is commutative (see (\ref{eq:comple_formal_1})):
\begin{equation} \label{eq:commut-CD-varphi-bullet}
\begin{tikzcd}
M[[\bfs]]_\Delta 
\ar[d,"\varphi_M"'] \ar[r,"\sim", "m\mapsto \widetilde{m}"'] & 
\Hom_{k[[\bfs]]_\Delta}(E[[\bfs]]_\Delta,F[[\bfs]]_\Delta) \ar[r,"\sim", "\text{restr.}"']
\ar[d,"\overline{(\varphi_F)_*}"] & \Hom_k(E,F[[\bfs]]_\Delta) \ar[d,"(\varphi_F)_*"'] \\
M[[\bft]]_\nabla \ar[r,"\sim", "m\mapsto \widetilde{m}"'] &
\Hom_{k[[\bft]]_\nabla}(E[[\bft]]_\nabla,F[[\bft]]_\nabla) \ar[r,"\sim", "\text{restr.}"'] &
\Hom_k(E,F[[\bft]]_\nabla).
\end{tikzcd}
\end{equation}
In order to simplify notations, we will also write:
$$ \varphi \sbullet f := \overline{(\varphi_F)_*}(f)\quad \forall f \in \Hom_{k[[\bfs]]_\Delta}(E[[\bfs]]_\Delta,F[[\bfs]]_\Delta),
$$
and so we have
$ \widetilde{\varphi \sbullet m} = \varphi \sbullet \widetilde{m}$ for all $m\in M[[\bfs]]_\Delta$. 
Let us notice that $(\varphi \sbullet f)(e) = (\varphi_F \pcirc f)(e)$ for all $e\in E$, i.e.: 
\begin{equation} \label{eq:atencion-bullet}
\framebox{$(\varphi \sbullet f)|_E = (\varphi_F \pcirc f)|_E = \varphi_F \pcirc \left(f|_E\right)$, but in general $\varphi \sbullet f \neq  \varphi_F \pcirc f$.}
\end{equation}
If $\varphi = \mathbf{0}$ is the trivial substitution map, then for each $k$-linear map
$f=\sum_\alpha f_\alpha \bfs^\alpha: E \to E[[\bfs]]_\Delta$ (resp. $f=\sum_\alpha f_\alpha \bfs^\alpha \in\End_k(E)[[\bfs]]_\Delta \equiv \End_{k[[\bfs]]_\Delta}(E[[\bfs]]_\Delta)$), we have $\mathbf{0} \sbullet f = f\sbullet \mathbf{0} = f_0 \in \End_k(E) \subset \Hom_k(E,E[[\bfs]]_\Delta)$ (resp. $\mathbf{0} \sbullet f = f\sbullet \mathbf{0} = f_0^e = \overline{f_0} \in \End_{k[[\bfs]]_\Delta}(E[[\bfs]]_\Delta)$).
\bigskip

If $\varphi: A[[\bfs]]_\Delta \to A[[\bft]]_\nabla$ is a substitution map, we have:
\begin{eqnarray*}
&\displaystyle
\varphi \sbullet (af) = \varphi(a) \left( \varphi \sbullet f\right),\   (fa) \sbullet \varphi  =  \left(f \sbullet \varphi \right) \varphi(a)
\end{eqnarray*}
for all $a\in A[[\bfs]]_\Delta$ and for all  $f\in \Hom_k(E,E[[\bfs]]_\Delta)$ (or $f\in \End_{k[[\bfs]]_\Delta}(E[[\bfs]]_\Delta)$). Moreover:
\begin{eqnarray*}
& \displaystyle (\varphi_E)_*(\Hom_k^\pcirc(E,M[[\bfs]]_\Delta)) \subset \Hom_k^\pcirc(E,E[[\bft]]_\nabla),
&
\\
& \displaystyle 
 \varphi \sbullet  \left( \Aut_{k[[\bfs]]_\Delta}^\pcirc(E[[\bfs]]_\Delta) \right) \subset
\Aut_{k[[\bft]]_\nabla}^\pcirc(E[[\bft]]_\nabla),&
\end{eqnarray*}
and so we have a commutative diagram:
\begin{equation}   \label{eq:diag-funda}
\begin{tikzcd}
\U^p(R;\Delta) \ar[r,"\sim", "r\mapsto \widetilde{r}"'] \ar[d,"\varphi \sbullet (-)"']  &   
 \Aut_{k[[\bfs]]_\Delta}^\pcirc(E[[\bfs]]_\Delta) \ar[d,"\varphi \sbullet (-)"] \ar[r,"\sim", "\text{restr.}"']  &
\Hom_k^\pcirc(E,E[[\bfs]]_\Delta)  \ar[d,"(\varphi_{E})_*"] \\
\U^q(R;\nabla) \ar[r,"\sim", "r\mapsto \widetilde{r}"'] & \Aut_{k[[\bft]]_\nabla}^\pcirc(E[[\bft]]_\nabla)
\ar[r,"\sim", "\text{restr.}"'] & \Hom_k(E,F[[\bft]]_\nabla).
\end{tikzcd}
\end{equation}
\bigskip

Now we are going to see how the $\varepsilon^i(r), \overline{\varepsilon}^i(r), \varepsilon(r),  \overline{\varepsilon}(r)$ (see \ref{nume:explicit_varepsilon}) can be expressed in terms of the action of substitution maps.
\medskip

Let us consider the power series ring $A[[\bfs,\tau]] = A[[\bfs]] \widehat{\otimes}_A A[[\tau]]$, and for each
 $i=1,\dots,p$ we denote $\sigma^i: A[[\bfs]] \to A[[\bfs,\tau]]$ the substitution map (with constant coefficients) defined by:
$$ \sigma^i(s_j) = \left\{ \begin{array}{lll}
s_i+s_i \tau & \text{if} & j=i\\
s_j & \text{if} & j\neq i.
\end{array} \right.
$$
Let us also denote $\sigma: A[[\bfs]] \to A[[\bfs,\tau]]$ the substitution map (with constant coefficients) defined by:
$$ \sigma(s_i) = s_i + s_i \tau\ \ \forall i=1,\dots,p, 
$$
and $\iota: A[[\bfs]] \to A[[\bfs,\tau]]$ the substitution map induced by the inclusion $\bfs \hookrightarrow \bfs \sqcup \{\tau\}$. We often consider $\iota$ as an inclusion $A[[\bfs]] \hookrightarrow A[[\bfs,\tau]]$.
\medskip

It is clear that for each non-empty co-ideal $\Delta \subset \N^p$, the substitution maps $\sigma^i,\sigma, \iota: A[[\bfs]] \to A[[\bfs,\tau]]$ induce new substitution maps $A[[\bfs]]_\Delta \to A[[\bfs,\tau]]_{\Delta \times \{0,1\}}$, which will be also denoted by the same letters. 
Moreover, as a consequence of Taylor's expansion we have:
$$ \sigma^i (a) = a + \chi^i_A(a) \tau,\quad \sigma (a) = a + \chi_A(a) \tau 
$$
where $\chi^i = s_i \frac{\partial}{\partial s_i}$ and $\chi = \sum_{i} \chi^i$ (see Definition\ref{def:Euler-derivation}).
\medskip

The proof of the following lemma is clear. 

\begin{lemma} \label{lemma:xi}
The map $\xi: R[[\bfs]]_{\Delta,+} \to \U^{p+1}(R;\Delta \times \{0,1\})$  defined as:
$$ \xi \left( \sum_{\scriptscriptstyle \alpha\in\Delta, |\alpha|>0} r_\alpha \bfs^\alpha \right) =
1 + \sum_{\scriptscriptstyle \alpha\in\Delta, |\alpha|>0} r_\alpha \bfs^\alpha \tau
$$
is a group homomorphism.
\end{lemma}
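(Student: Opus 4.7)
The plan is to verify the homomorphism property by direct multiplication, exploiting that the co-ideal $\Delta \times \{0,1\}$ kills $\tau^2$ in the ambient quotient ring. The essential observation is that the multi-index $(0,\dots,0,2)$ lies outside $\Delta \times \{0,1\}$, so in $R[[\bfs,\tau]]_{\Delta \times \{0,1\}}$ we have $\tau^2 = 0$. Moreover, $\tau$ is a central variable (it commutes with every element of $R$, since $R[[\bfs,\tau]] = R \widehat{\otimes}_k k[[\bfs,\tau]]$), so $\tau$ commutes with arbitrary $r \in R[[\bfs]]_\Delta$.

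First I would check well-definedness: for $r \in R[[\bfs]]_{\Delta,+}$ the series $1 + r\tau \in R[[\bfs,\tau]]_{\Delta \times \{0,1\}}$ has degree-$0$ coefficient equal to $1$, hence by Lemma~\ref{lemma:unit-psr} it is a unit, and lies in $\U^{p+1}(R;\Delta \times \{0,1\})$ as required.

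The main step is the multiplicative identity. Given $r, r' \in R[[\bfs]]_{\Delta,+}$, using $\tau^2 = 0$ and the commutation of $\tau$ with coefficients, we compute:
$$ \xi(r)\, \xi(r') = (1 + r\tau)(1 + r'\tau) = 1 + r\tau + r'\tau + r r'\, \tau^2 = 1 + (r+r')\tau = \xi(r + r').$$
Also $\xi(0) = 1$, which is the neutral element of $\U^{p+1}(R;\Delta \times \{0,1\})$. Therefore $\xi$ is a group homomorphism from $(R[[\bfs]]_{\Delta,+}, +)$ to $(\U^{p+1}(R;\Delta \times \{0,1\}), \cdot)$.

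There is no real obstacle here; the entire content is the vanishing of $\tau^2$ in the quotient, which linearizes the multiplication on elements of the form $1 + r\tau$.
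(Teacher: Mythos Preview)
Your proof is correct and is exactly the direct verification the paper has in mind; the paper itself omits the argument, stating only that ``the proof of the following lemma is clear.'' The key point, as you identified, is that $\tau^2=0$ in $R[[\bfs,\tau]]_{\Delta\times\{0,1\}}$, which makes the computation $(1+r\tau)(1+r'\tau)=1+(r+r')\tau$ immediate.
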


Let us notice that the map $\xi$ above is injective and its image is the set of $r\in \U^{p+1}(R;\Delta \times \{0,1\})$ such that $\supp r \subset \{(0,0)\} \cup ((\Delta \setminus \{0\}) \times \{1\})$.

\begin{prop} \label{prop:xi-varepsilon}
For each $r\in \U^p(R;\Delta)$, the following properties hold:
\begin{enumerate}
\item[(1)] $ r^* (\sigma^i\sbullet r)  = \xi( \varepsilon^i(r))$, $  (\sigma^i\sbullet r) r^*  = \xi( \overline{\varepsilon}^i(r))$.
\item[(2)] $ r^* (\sigma\sbullet r)  = \xi( \varepsilon(r))$, $ (\sigma\sbullet r) r^*   = \xi( \overline{\varepsilon}(r))$.
\end{enumerate}
\end{prop}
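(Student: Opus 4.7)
The approach is to exploit the fact that, in the quotient ring $A[[\bfs,\tau]]_{\Delta \times \{0,1\}}$, the variable $\tau$ is nilpotent of order $2$, so the substitution map $\sigma^i$ only affects series up to first order in $\tau$. First I would observe that $\sigma^i$ has constant coefficients, so $\sigma^i \sbullet r = \sigma^i_R(r)$, and for any $\alpha \in \Delta$,
$$ \sigma^i(\bfs^\alpha) = \bfs^\alpha(1+\tau)^{\alpha_i} = \bfs^\alpha (1 + \alpha_i \tau)\pmod{\tau^2}.$$
Writing $r = \sum_\alpha r_\alpha \bfs^\alpha$ and using that $\tau$ commutes with $\bfs^\alpha$ and with every $r_\alpha \in R$, this immediately yields the key identity
$$ \sigma^i \sbullet r = \sum_{\alpha\in\Delta} r_\alpha \bfs^\alpha(1+\alpha_i\tau) = r + \chi^i_R(r)\,\tau$$
in $R[[\bfs,\tau]]_{\Delta \times \{0,1\}}$.

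From this the claims follow by a single multiplication. For part (1), left-multiplying by $r^*$ gives
$$ r^*(\sigma^i \sbullet r) = r^*r + r^*\chi^i_R(r)\,\tau = 1 + \varepsilon^i(r)\,\tau,$$
by Definition \ref{defi:varepsilon-fd}. Since $\varepsilon^i(r) \in R[[\bfs]]_{\Delta,+}$ (see \ref{nume:explicit_varepsilon}), the right-hand side equals $\xi(\varepsilon^i(r))$ by the definition of $\xi$ in Lemma \ref{lemma:xi}. Similarly,
$$ (\sigma^i \sbullet r)r^* = rr^* + \chi^i_R(r)\,\tau\, r^* = 1 + \chi^i_R(r) r^*\,\tau = 1 + \overline{\varepsilon}^i(r)\,\tau = \xi(\overline{\varepsilon}^i(r)),$$
where I used that $\tau$ commutes with $r^*$.

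For part (2), the same argument works verbatim, replacing $\sigma^i$ by $\sigma$ and $\chi^i$ by $\bchi = \sum_{i=1}^p \chi^i$; indeed $\sigma(\bfs^\alpha) = \bfs^\alpha(1+\tau)^{|\alpha|} = \bfs^\alpha(1+|\alpha|\tau)$ modulo $\tau^2$, so $\sigma \sbullet r = r + \bchi_R(r)\,\tau$, and then $r^*(\sigma \sbullet r) = 1 + \varepsilon(r)\,\tau = \xi(\varepsilon(r))$ and $(\sigma \sbullet r)r^* = 1 + \overline{\varepsilon}(r)\,\tau = \xi(\overline{\varepsilon}(r))$.

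There is no real obstacle here beyond the bookkeeping: the entire proof rests on the single observation that $\tau^2 = 0$ in the relevant quotient reduces the action of $\sigma^i$ or $\sigma$ to a first-order Taylor expansion governed by the partial (or total) Euler derivation, which is precisely the ingredient used to define $\varepsilon^i$ and $\overline{\varepsilon}^i$. The only subtle point worth emphasizing is the difference between left and right multiplication by $r^*$, which produces $\varepsilon$ versus $\overline{\varepsilon}$ respectively because $R = \End_k(A)$ is non-commutative.
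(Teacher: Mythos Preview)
Your proof is correct and follows essentially the same approach as the paper: both rest on the Taylor expansion identity $\sigma^i \sbullet r = r + \chi^i_R(r)\,\tau$ (and its analogue for $\sigma$), after which the result is immediate by multiplying by $r^*$. The paper's proof simply cites this identity as ``Taylor's expansion formula'' (it had already been noted for $A$ just before the statement), whereas you derive it explicitly from $\sigma^i(\bfs^\alpha) = \bfs^\alpha(1+\alpha_i\tau)$ in the quotient where $\tau^2=0$; the extra detail you provide is entirely in line with the paper's intent.
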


\begin{proof} It is a straightforward consequence of Taylor's expansion formula:
$$ \sigma^i\sbullet r = r + \chi^i_R(r) \tau,\quad \sigma \sbullet r = r + \chi_R(r) \tau.
$$
\end{proof}

Let us notice that in the above proposition, the action $\iota \sbullet (-): R[[\bfs]]_\Delta \to R[[\bfs,\tau]]_{\Delta \times \{0,1\}}$ is simply considered as an inclusion.

\section{Multivariate Hasse--Schmidt derivations}

\subsection{Basic definitions}
\label{section:HS}

In this section we recall some notions and results of the theory of Hasse--Schmidt derivations \cite{has37,mat_86} as developed in \cite{nar_subst_2018}. 
\medskip

From now on $k$ will be a commutative ring, $A$ a commutative $k$-algebra, $\bfs =\{s_1,\dots,s_p\}$ a set of variables and $\Delta \subset \N^p$ a non-empty co-ideal.

\begin{defi} 
A {\em $(p,\Delta)$-variate Hasse--Schmidt derivation}, or 
a {\em $(p,\Delta)$-va\-riate HS-de\-ri\-va\-tion} for short, of $A$ over $k$  
 is a family $D=(D_\alpha)_{\alpha\in \Delta}$ 
 of $k$-linear maps $D_\alpha:A
\longrightarrow A$, satisfying the following Leibniz type identities: $$ D_0=\Id_A, \quad
D_\alpha(xy)=\sum_{\beta+\gamma=\alpha}D_\beta(x)D_\gamma(y) $$ for all $x,y \in A$ and for all
$\alpha\in \Delta$. We denote by
$\HS^p_k(A;\Delta)$ the set of all $(p,\Delta)$-variate HS-derivations of $A$ over
$k$. For $p=1$, a $1$-variate HS-derivation will be simply called a {\em Hasse--Schmidt derivation}   (a HS-derivation for short), or a {\em higher derivation}\footnote{This terminology is used for instance in \cite{mat_86}.}, and we will simply write $\HS_k(A;m):= \HS^1_k(A;\Delta)$ for $\Delta=\{q\in\N\ |\ q\leq m\}$.\footnote{These HS-derivations are called of length $m$ in \cite{nar_2012}.}
\end{defi}

Any $(p,\Delta)$-variate HS-derivation $D$ of $A$ over $k$  can be understood as a power series:
$$ \sum_{\scriptscriptstyle \alpha\in\Delta} D_\alpha \bfs^\alpha \in R[[\bfs]]_\Delta,\quad R=\End_k(A),$$
and so we consider $\HS^p_k(A;\Delta) \subset R[[\bfs]]_\Delta$. Actually 
$\HS^p_k(A;\Delta)$ is a (multiplicative) sub-group of $\U^p(R;\Delta)$. 
The group operation in $\HS^p_k(A;\Delta)$ is explicitly given by:
$$ (D,E) \in \HS^p_k(A;\Delta) \times \HS^p_k(A;\Delta) \longmapsto D \pcirc E \in \HS^p_k(A;\Delta)$$
with:
$$ (D \pcirc E)_\alpha = \sum_{\scriptscriptstyle \beta+\gamma=\alpha} D_\beta \pcirc E_\gamma,
$$
and the identity element of $\HS^p_k(A;\Delta)$ is $\mathbb{I}$ with $\mathbb{I}_0 = \Id$ and 
$\mathbb{I}_\alpha = 0$ for all $\alpha \neq 0$. The inverse of a $D\in \HS^p_k(A;\Delta)$ will be denoted by $D^*$. 
\medskip

For $\Delta' \subset \Delta \subset \N^p$ non-empty co-ideals, we have truncations:
$$\tau_{\Delta \Delta'}: \HS^p_k(A;\Delta) \longrightarrow \HS^p_k(A;\Delta'),
$$ which obviously are group homomorphisms. 
Since any $D\in\HS_k^p(A;\Delta)$ is determined by its finite truncations, we have a natural group isomorphism
\begin{equation} \label{eq:HS-inv-limit-finite}
 \HS_k^p(A) =  \lim_{\stackrel{\longleftarrow}{\substack{\scriptscriptstyle \Delta' \subset \Delta\\ \scriptscriptstyle  \sharp \Delta'<\infty}}} \HS_k^p(A;\Delta').
\end{equation}

The proof of the following proposition is straightforward and it is left to the reader (see Notation \ref{notacion:Ump} and \ref{nume:widetilde-r}).

\begin{prop} \label{prop:caracter_HS}
Let us denote $R=\End_k(A)$ and
let $D= \sum_\alpha D_\alpha \bfs^\alpha \in R[[\bfs]]_\Delta$ be a power series. The following properties are equivalent:
\begin{enumerate}
\item[(a)] $D$ is a $(\bfs,\Delta)$-variate HS-derivation of $A$ over $k$.
\item[(b)] The map $\widetilde{D}: A[[\bfs]]_\Delta \to A[[\bfs]]_\Delta$ is a (continuous) $k[[\bfs]]_\Delta$-algebra homomorphism compatible with the natural augmentation $A[[\bfs]]_\Delta \to A$.
\item[(c)] $D\in\U^p(R;\Delta)$ and for all $a\in A[[\bfs]]_\Delta$ we have $D a  = \widetilde{D}(a) D$.
\item[(d)] $D\in\U^p(R;\Delta)$ and for all  $a\in A$ we have $D a  = \widetilde{D}(a) D$.
\end{enumerate}
Moreover, in such a case $\widetilde{D}$ is a $k[[\bfs]]_\Delta$-algebra automorphism of $A[[\bfs]]_\Delta$.
\end{prop}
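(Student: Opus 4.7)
The plan is to prove the cyclic chain $\mathrm{(a)} \Rightarrow \mathrm{(b)} \Rightarrow \mathrm{(c)} \Rightarrow \mathrm{(d)} \Rightarrow \mathrm{(a)}$ and then deduce the closing assertion about $\widetilde{D}$ being an automorphism. The conceptual pivot is the isomorphism (\ref{eq:comple_formal_1}): under $r \mapsto \widetilde{r}$ the ring $R[[\bfs]]_\Delta$ is identified with $\End_{k[[\bfs]]_\Delta}(A[[\bfs]]_\Delta)$, so that multiplication in $R[[\bfs]]_\Delta$ corresponds to composition of operators, while any $a\in A[[\bfs]]_\Delta$ embeds as the multiplication operator $\mu_a$.

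For $\mathrm{(a)}\Rightarrow \mathrm{(b)}$: assuming $D$ is an HS-derivation, $D_0=\Id$ yields $D\in\U^p(R;\Delta)$, so by (\ref{eq:U-iso-pcirc}) $\widetilde{D}\in \Aut^\pcirc_{k[[\bfs]]_\Delta}(A[[\bfs]]_\Delta)$, which is automatically $k[[\bfs]]_\Delta$-linear, continuous, and compatible with the augmentation. It remains to verify multiplicativity, and by continuity and $k[[\bfs]]_\Delta$-linearity this needs only be checked on elements $x,y\in A$; comparing the coefficient of $\bfs^\alpha$ on both sides of $\widetilde{D}(xy)=\widetilde{D}(x)\widetilde{D}(y)$ is precisely the Leibniz identity defining $D$.

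For $\mathrm{(b)}\Rightarrow \mathrm{(c)}$: augmentation compatibility forces $D_0=\Id$, so $D\in\U^p(R;\Delta)$. Under the identification above, $Da$ corresponds to $b\mapsto \widetilde{D}(ab)$ while $\widetilde{D}(a)D$ corresponds to $b\mapsto \widetilde{D}(a)\widetilde{D}(b)$; thus the equality in $R[[\bfs]]_\Delta$ is exactly the multiplicativity of $\widetilde{D}$. $\mathrm{(c)}\Rightarrow \mathrm{(d)}$ is just restriction. For $\mathrm{(d)}\Rightarrow \mathrm{(a)}$, expanding $Da = \widetilde{D}(a)D$ as power series in $R[[\bfs]]_\Delta$ and equating coefficients of $\bfs^\alpha$ gives
$$ D_\alpha \pcirc \mu_a \;=\; \sum_{\beta+\gamma=\alpha} \mu_{D_\beta(a)} \pcirc D_\gamma \quad\text{in } R; $$
evaluating at any $b\in A$ produces $D_\alpha(ab)=\sum_{\beta+\gamma=\alpha} D_\beta(a)D_\gamma(b)$, and $D_0=\Id$ comes from $D\in\U^p(R;\Delta)$.

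For the final assertion, since $\HS^p_k(A;\Delta)\subset \U^p(R;\Delta)$ is a subgroup, the inverse $D^*$ is also an HS-derivation; applying $\mathrm{(a)}\Rightarrow \mathrm{(b)}$ to $D^*$ shows $\widetilde{D^*}$ is a $k[[\bfs]]_\Delta$-algebra endomorphism, and since $\widetilde{D}\pcirc \widetilde{D^*} = \widetilde{DD^*}=\Id=\widetilde{D^*}\pcirc \widetilde{D}$, we conclude that $\widetilde{D}$ is an automorphism. The only mild subtlety is tracking the different incarnations of a coefficient $D_\alpha\in R$ when viewed inside $R[[\bfs]]_\Delta$ versus as an operator, and the equally mild point of recognizing that the multiplication-by-$a$ map $\mu_a$ is the image of $a$ under the natural embedding $A[[\bfs]]_\Delta\hookrightarrow R[[\bfs]]_\Delta$; once this bookkeeping is in place, all four conditions reduce to restatements of the Leibniz rule.
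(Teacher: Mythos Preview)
Your proof is correct; the paper itself provides no proof of this proposition, declaring it ``straightforward'' and leaving it to the reader. Your cyclic argument $\mathrm{(a)}\Rightarrow\mathrm{(b)}\Rightarrow\mathrm{(c)}\Rightarrow\mathrm{(d)}\Rightarrow\mathrm{(a)}$ via the ring isomorphism $R[[\bfs]]_\Delta\cong\End_{k[[\bfs]]_\Delta}(A[[\bfs]]_\Delta)$ is exactly the natural route, and your handling of the automorphism clause by passing to $D^*$ (legitimate, since the subgroup property of $\HS^p_k(A;\Delta)$ is asserted just before the proposition) is clean.
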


\begin{notacion} \label{notacion:pcirc-HS}
Let us denote:
\begin{eqnarray*}
&\displaystyle  \Hom_{k-\text{\rm alg}}^\pcirc(A,A[[\bfs]]_\Delta) :=
\left\{ f \in \Hom_{k-\text{\rm alg}}(A,A[[\bfs]]_\Delta),\ f(a) \equiv a\!\!\!\!\mod \langle \bfs \rangle\  \forall a\in A \right\}, &\\
&\displaystyle  \Aut_{k[[\bfs]]_\Delta-\text{\rm alg}}^\pcirc(A[[\bfs]]_\Delta) :=
&\\
&\displaystyle 
 \left\{ f \in \Aut_{k[[\bfs]]_\Delta-\text{\rm alg}}(A[[\bfs]]_\Delta)\ |\ f(a) \equiv a_0\!\!\!\!\mod \langle \bfs \rangle\ \forall a\in A[[\bfs]]_\Delta \right\}.
\end{eqnarray*}
\end{notacion}
It is clear that $\Hom_{k-\text{\rm alg}}^\pcirc(A,A[[\bfs]]_\Delta) \subset \Hom_k^\pcirc(A,A[[\bfs]]_\Delta)$ and 
$$\Aut_{k[[\bfs]]_\Delta-\text{\rm alg}}^\pcirc(A[[\bfs]]_\Delta) \subset \Aut_{k[[\bfs]]_\Delta}^\pcirc(A[[\bfs]]_\Delta)$$ (see Notation \ref{notacion:pcirc}) are subgroups, and
we have  group isomorphisms (see (\ref{eq:Aut-iso-pcirc}) and  (\ref{eq:U-iso-pcirc})):

\begin{equation} \label{eq:HS-funda}
\begin{tikzcd}
\HS^p_k(A;\Delta) \ar[r,"D \mapsto \widetilde{D}","\simeq"'] & \Aut_{k[[\bfs]]_\Delta-\text{\rm alg}}^\pcirc(A[[\bfs]]_\Delta)  \ar[r,"\text{restr.}","\simeq"'] & \Hom_{k-\text{\rm alg}}^\pcirc(A,A[[\bfs]]_\Delta).
\end{tikzcd}
\end{equation}
The composition of the above isomorphisms is given by:
\begin{equation} \label{eq:HS-iso-Hom(A,A[[s]])}
 D\in \HS^p_k(A;\Delta) \stackrel{\sim}{\longmapsto} \Phi_D := \left[a\in A \mapsto \sum_{\scriptscriptstyle \alpha\in\Delta} D_\alpha(a) \bfs^\alpha\right] \in \Hom_{k-\text{\rm alg}}^\pcirc(A,A[[\bfs]]_\Delta).
\end{equation}
For each HS-derivation $D\in \HS^p_k(A;\Delta)$ we have $\widetilde{D} = \left(\Phi_D\right)^e$, i.e.:
$$ \widetilde{D}\left( \sum_{\scriptscriptstyle \alpha\in\Delta} a_\alpha \bfs^\alpha \right) = 
\sum_{\scriptscriptstyle \alpha\in\Delta} \Phi_D(a_\alpha) \bfs^\alpha\quad 
$$
for all $\sum_\alpha a_\alpha \bfs^\alpha \in A[[\bfs]]_\Delta$, and for any $E\in \HS^p_k(A;\Delta)$ we have $\Phi_{D \smallcirc E} = \widetilde{D} \pcirc \Phi_E$. 
If $\Delta'\subset \Delta$ is another non-empty co-ideal and
 we denote by  $\pi_{\Delta \Delta'}: A[[\bfs]]_\Delta \to A[[\bfs]]_{\Delta'}$ the projection (or truncation), one has $\Phi_{\tau_{\Delta \Delta'} (D)} = \pi_{\Delta \Delta'}\pcirc \Phi_D$.

\subsection{The action of substitution maps on HS-derivations}

Now, we recall the action of substitution maps on HS-derivations \cite[\S 6]{nar_subst_2018}.
Let $\bfs =\{s_1,\dots,s_p\}$, $\bft =\{t_1,\dots,t_p\}$ be sets of variables, $\Delta \subset \N^p$, $\nabla \subset \N^q$ non-empty co-ideals and let us write $R=\End_k(A)$.
\medskip

Let us recall Proposition 10 in \cite{nar_subst_2018}.

\begin{prop} \label{prop:equiv-action-subs-HS}
For any substitution map $\varphi:A[[\bfs]]_\Delta \to A[[\bft]]_\nabla$, we have:
\begin{enumerate}
\item[1)] $\varphi_*\left(\Hom_{k-\text{\rm alg}}^\pcirc(A,A[[\bfs]]_\Delta)\right) \subset \Hom_{k-\text{\rm alg}}^\pcirc(A,A[[\bft]]_\nabla)$,
\item[2)] $\varphi \sbullet \HS^p_k(A;\Delta) \subset \HS^q_k(A;\nabla)$,
\item[3)] $ \varphi\sbullet \Aut_{k[[\bfs]]_\Delta-\text{\rm alg}}^\pcirc(A[[\bfs]]_\Delta)  \subset
\Aut_{k[[\bft]]_\nabla-\text{\rm alg}}^\pcirc(A[[\bft]]_\nabla)$.
\end{enumerate}
\end{prop}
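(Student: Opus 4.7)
The plan is to reduce everything to part (1) by transporting the situation across the group isomorphisms in \eqref{eq:HS-funda} that identify $\HS^p_k(A;\Delta)$, $\Aut_{k[[\bfs]]_\Delta-\text{\rm alg}}^\pcirc(A[[\bfs]]_\Delta)$ and $\Hom_{k-\text{\rm alg}}^\pcirc(A,A[[\bfs]]_\Delta)$ (and the analogous triple for $(\bft,\nabla)$). On the last of these sets, the action $\varphi\sbullet(-)$ is nothing but post-composition by $\varphi$, so once (1) is in hand, (2) and (3) will follow formally from the commutative diagram \eqref{eq:commut-CD-varphi-bullet}.

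For (1), let $f\in \Hom_{k-\text{\rm alg}}^\pcirc(A,A[[\bfs]]_\Delta)$. Since $\varphi:A[[\bfs]]_\Delta\to A[[\bft]]_\nabla$ is a $k$-algebra map, $\varphi_*(f)=\varphi\pcirc f:A\to A[[\bft]]_\nabla$ is automatically a $k$-algebra homomorphism. The only thing to verify is the augmentation condition. Given $a\in A$, the assumption on $f$ yields $f(a)=a+u$ with $u\in \langle\bfs\rangle A[[\bfs]]_\Delta$; since $\varphi$ is $A$-linear one has $\varphi(a)=a$, and since $\ord \varphi(s_i)\geq 1$ for every $i$, $\varphi$ maps $\langle\bfs\rangle A[[\bfs]]_\Delta$ into $\langle\bft\rangle A[[\bft]]_\nabla$, whence $\varphi(u)\in \langle\bft\rangle A[[\bft]]_\nabla$ and $\varphi_*(f)(a)\equiv a\!\!\mod \langle\bft\rangle$.

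For (2), take $D\in \HS^p_k(A;\Delta)$ and set $R=\End_k(A)$. Specialising diagram \eqref{eq:commut-CD-varphi-bullet} to $E=F=A$ and $M=R$, the image of $\varphi\sbullet D=\varphi_R(D)$ under $r\mapsto \widetilde{r}$ followed by restriction to $A$ is $\varphi_A\pcirc(\widetilde{D}|_A)=\varphi\pcirc \Phi_D$; in other words, $\Phi_{\varphi\sbullet D}=\varphi_*(\Phi_D)$ in the notation of \eqref{eq:HS-iso-Hom(A,A[[s]])}. Part (1) places this in $\Hom_{k-\text{\rm alg}}^\pcirc(A,A[[\bft]]_\nabla)$, and applying the isomorphism \eqref{eq:HS-funda} for $(\bft,\nabla)$ in reverse gives $\varphi\sbullet D\in \HS^q_k(A;\nabla)$. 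Part (3) is the same argument one level up: for $g\in \Aut_{k[[\bfs]]_\Delta-\text{\rm alg}}^\pcirc(A[[\bfs]]_\Delta)$, the element $\varphi\sbullet g=(\varphi\pcirc g|_A)^e$ has restriction $\varphi_*(g|_A)\in \Hom_{k-\text{\rm alg}}^\pcirc(A,A[[\bft]]_\nabla)$ by (1), and the inverse isomorphism in \eqref{eq:HS-funda} returns an element of $\Aut_{k[[\bft]]_\nabla-\text{\rm alg}}^\pcirc(A[[\bft]]_\nabla)$. There is no genuine obstacle in this argument: the whole content is the augmentation check in part (1), and the rest is bookkeeping along the already established identifications.
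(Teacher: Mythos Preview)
Your argument is correct. The paper itself does not supply a proof of this proposition: it is stated as a recall of Proposition~10 in \cite{nar_subst_2018}. Your approach is the natural one and is essentially what one would expect the cited proof to be --- part~(1) is an immediate check using that $\varphi$ is an $A$-algebra map sending $\langle\bfs\rangle$ into $\langle\bft\rangle$, and parts~(2) and~(3) follow by transporting along the bijections of \eqref{eq:HS-funda} via the commutative square \eqref{eq:commut-CD-varphi-bullet} (equivalently \eqref{eq:diag-funda}). The only point worth making explicit is that the bijection $\U^q(R;\nabla)\xrightarrow{\sim}\Hom_k^\pcirc(A,A[[\bft]]_\nabla)$ identifies $\HS^q_k(A;\nabla)$ \emph{exactly} with the preimage of $\Hom_{k-\text{\rm alg}}^\pcirc(A,A[[\bft]]_\nabla)$, so that landing in the latter really forces $\varphi\sbullet D$ to be a HS-derivation; this is implicit in the statement that \eqref{eq:HS-funda} is a bijection restricted from the ambient bijection, and follows from Proposition~\ref{prop:caracter_HS}.
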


Then we have a commutative diagram:
\begin{equation} \label{eq:diag-funda-HS}
\begin{tikzcd}
\Hom_{k-\text{\rm alg}}^\pcirc(A,A[[\bfs]]_\Delta) \ar[d,"\varphi_*"'] & \HS^p_k(A;\Delta) \ar[l,"\sim"', "\Phi_D \mapsfrom D"] \ar[r,"\sim"] \ar[d,"\varphi\sbullet (-)"] & \Aut_{k[[\bfs]]_\Delta-\text{\rm alg}}^\pcirc(A[[\bfs]]_\Delta) n\ar[d,"\varphi\sbullet (-)"] \\
\Hom_{k-\text{\rm alg}}^\pcirc(A,A[[\bft]]_\nabla)  & \HS^q_k(A;\nabla) \ar[l,"\sim"', "\Phi_D \mapsfrom D"]
\ar[r,"\sim"] &
\Aut_{k[[\bft]]_\nabla-\text{\rm alg}}^\pcirc(A[[\bft]]_\nabla).
\end{tikzcd}
\end{equation}
In particular, for any HS-derivation $D\in \HS^p_k(A;\Delta)$ we have $\varphi \sbullet D\in \HS^q_k(A;\nabla)$ (see \ref{nume:def-sbullet}). Moreover
$ \Phi_{\varphi \sbullet D} = \varphi \pcirc \Phi_D$.
\medskip

It is clear that for any co-ideals $\Delta' \subset \Delta$ and $\nabla' \subset \nabla$ with $\varphi \left( \Delta'_A/\Delta_A \right) \subset \nabla'_A/\nabla_A$
we have:
\begin{equation} \label{eq:trunca_bullet}
\tau_{\nabla \nabla'}(\varphi \sbullet D) = \varphi' \sbullet \tau_{\Delta \Delta'}(D),
\end{equation}
where $\varphi': A[[\bfs]]_{\Delta'} \to A[[\bft]]_{\nabla'}$ is the substitution map induced by $\varphi$.
\bigskip

\numero
\label{nume:def-sbullet-HS} Let $\bfu=\{u_1,\dots,u_r\}$ be another set of variables, $\Omega\subset \N^r$ a non-empty co-ideal, $\varphi: A[[\bfs]]_\Delta \to A[[\bft]]_\nabla$, $\psi:  A[[\bft]]_\nabla \to  A[[\bfu]]_\Omega$ substitution maps and $D,D'\in\HS_k^p(A;\Delta)$ HS-derivations. From \ref{nume:def-sbullet} we deduce the following properties:\medskip

\noindent -) If we denote $E:= \varphi \sbullet D \in\HS_k^q(A;\nabla)$,  we have
\begin{equation} \label{eq:expression_phi_D} 
E_0=\Id,\quad  E_e = \sum_{\substack{\scriptstyle \alpha\in\Delta\\ \scriptstyle |\alpha|\leq |e| }} {\bf C}_e(\varphi,\alpha) D_\alpha,\quad \forall e\in \nabla.
\end{equation}
-) If $\varphi = \mathbf{0}$ is the trivial substitution map or if $D=\mathbb{I}$, then 
$\varphi \sbullet D = \mathbb{I}$.
\medskip

\noindent
-) If $\varphi$ has constant coefficients, then $\varphi \sbullet (D\pcirc D') = (\varphi \sbullet D) \pcirc (\varphi \sbullet D')$ and $(\varphi \sbullet D)^* = \varphi \sbullet D^*$
 (the general case is treated in Proposition \ref{prop:varphi-D-main}).
\medskip

\noindent 
-) $ \psi \sbullet (\varphi \sbullet D) = (\psi \pcirc \varphi) \sbullet D$.
\bigskip

The following result is proven in Propositions 11 and 12 of \cite{nar_subst_2018}.

\begin{prop} \label{prop:varphi-D-main}
Let $\varphi:A[[\bfs]]_\Delta  \to A[[\bft]]_\nabla$ be a substitution map. Then, 
the following assertions hold:
\begin{enumerate}
\item[(i)] For each $D\in\HS_k^p(A;\Delta)$ there is a unique substitution map
$\varphi^D: A[[\bfs]]_\Delta  \to A[[\bft]]_\nabla$ such that 
$\left(\widetilde{\varphi \sbullet D}\right) \pcirc \varphi^D =  \varphi \pcirc \widetilde{D}$. 
Moreover, $\left(\varphi\sbullet D\right)^* = \varphi^D \sbullet D^*$, $\varphi^{\mathbb{I}} = \varphi$ and:
$$ {\bf C}_e(\varphi,f+\nu) = \sum_{\substack{\scriptscriptstyle\beta+\gamma=e\\ 
\scriptscriptstyle |f+g|\leq |\beta|,|\nu|\leq |\gamma| }} {\bf C}_\beta(\varphi,f+g) D_g({\bf C}_\gamma(\varphi^D,\nu)) 
$$
for all $e\in \Delta$ and for all $f,\nu\in\nabla$ with $|f+\nu|\leq |e|$.
\item[(ii)] For each $D,E\in\HS_k^p(A;\nabla)$, we have $\varphi \sbullet (D \pcirc E) = (\varphi \sbullet D) \pcirc (\varphi^D \sbullet E)$ and 
 $\left(\varphi^D\right)^E = \varphi^{D \pcirc E}$. 
 In particular, $\left(\varphi^D\right)^{D^*} = \varphi$. 
\item[(iii)] If $\psi$ is another composable substitution map, then $(\varphi \pcirc \psi)^D = \varphi^{\psi\sbullet D} \pcirc \psi^D$.
\item[(iv)] If $\varphi$ has constant coefficients then $\varphi^D = \varphi$.
\end{enumerate}
\end{prop}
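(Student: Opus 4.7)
The central idea is that $\widetilde{\varphi\sbullet D}$ is a $k[[\bft]]_\nabla$-algebra automorphism of $A[[\bft]]_\nabla$ (by Proposition \ref{prop:caracter_HS} applied to $\varphi\sbullet D\in\HS^q_k(A;\nabla)$), so the defining equation in (i) can simply be solved:
\[
\varphi^D := \bigl(\widetilde{\varphi\sbullet D}\bigr)^{-1} \pcirc \varphi \pcirc \widetilde{D}.
\]
Uniqueness is then automatic. The real content of (i) is to verify that $\varphi^D$ is a substitution map. That it is a $k$-algebra map is clear. To see it is $A$-linear, I restrict to $A$: using the restriction column of (\ref{eq:diag-funda-HS}) one has $\varphi\pcirc\widetilde{D}|_A = \varphi\pcirc\Phi_D = \Phi_{\varphi\sbullet D} = \widetilde{\varphi\sbullet D}|_A$, so $\varphi^D|_A$ is the natural inclusion $A\hookrightarrow A[[\bft]]_\nabla$. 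For the order condition, use that $\widetilde{D}(s_i)=s_i$ ($k[[\bfs]]_\Delta$-linearity of $\widetilde{D}$ together with $D_\beta(1)=0$ for $\beta\neq 0$), so $\varphi^D(s_i) = (\widetilde{\varphi\sbullet D})^{-1}(\varphi(s_i))$; composing with the augmentation $A[[\bft]]_\nabla\to A$ (which $\widetilde{\varphi\sbullet D}$ preserves since $\varphi\sbullet D$ is a HS-derivation) and using $\ord(\varphi(s_i))\geq 1$ gives $\ord(\varphi^D(s_i))\geq 1$. The explicit combinatorial formula for ${\bf C}_e(\varphi,f+\nu)$ then drops out by reading off coefficients in $\widetilde{\varphi\sbullet D}\pcirc\varphi^D = \varphi\pcirc\widetilde{D}$, and $\varphi^\mathbb{I}=\varphi$ is immediate from $\widetilde{\mathbb{I}}=\Id$ and $\varphi\sbullet\mathbb{I}=\mathbb{I}$.

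Parts (ii) and (iii) reduce to the uniqueness clause of (i). For (ii), I would compute
\[
\varphi\pcirc\widetilde{D\pcirc E} = \varphi\pcirc\widetilde{D}\pcirc\widetilde{E} = \widetilde{\varphi\sbullet D}\pcirc\varphi^D\pcirc\widetilde{E} = \widetilde{\varphi\sbullet D}\pcirc\widetilde{\varphi^D\sbullet E}\pcirc (\varphi^D)^E
\]
by applying the defining relation twice; since $\widetilde{\varphi\sbullet D}\pcirc\widetilde{\varphi^D\sbullet E} = \widetilde{(\varphi\sbullet D)\pcirc(\varphi^D\sbullet E)}$, uniqueness of the decomposition in (i) applied to $D\pcirc E$ simultaneously forces $\varphi\sbullet(D\pcirc E)=(\varphi\sbullet D)\pcirc(\varphi^D\sbullet E)$ and $\varphi^{D\pcirc E}=(\varphi^D)^E$. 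Specialising $E=D^*$ then gives $(\varphi\sbullet D)^* = \varphi^D\sbullet D^*$ (completing the outstanding formula of (i)) and $(\varphi^D)^{D^*}=\varphi$. For (iii), use the associativity $(\varphi\pcirc\psi)\sbullet D = \varphi\sbullet(\psi\sbullet D)$ and chase
\[
(\varphi\pcirc\psi)\pcirc\widetilde{D} = \varphi\pcirc\widetilde{\psi\sbullet D}\pcirc\psi^D = \widetilde{\varphi\sbullet(\psi\sbullet D)}\pcirc\varphi^{\psi\sbullet D}\pcirc\psi^D,
\]
then match against the defining equation for $(\varphi\pcirc\psi)^D$ and invoke uniqueness once more.

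Part (iv) is a short direct check: if $\varphi$ has constant coefficients, then $\varphi(s_i)\in k[[\bft]]_\nabla$, and the $k[[\bft]]_\nabla$-linear unital algebra map $\widetilde{\varphi\sbullet D}$ fixes $k[[\bft]]_\nabla$ pointwise, so $\varphi^D(s_i) = (\widetilde{\varphi\sbullet D})^{-1}(\varphi(s_i)) = \varphi(s_i)$, and since a substitution map is determined by its values on the $s_i$ this forces $\varphi^D=\varphi$. The main obstacle across the whole argument is the verification in (i) that the candidate $\varphi^D$ really is a substitution map (the $A$-linearity and the vanishing of constant terms on each $s_i$); once these are in place, the rest of the proposition is a disciplined sequence of rewritings of $\varphi\pcirc\widetilde{D}$ together with appeals to uniqueness.
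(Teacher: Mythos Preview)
The paper does not give a proof of this proposition: it is quoted verbatim with the sentence ``The following result is proven in Propositions 11 and 12 of \cite{nar_subst_2018}.'' There is therefore no in-paper proof to compare against, so I will simply comment on the soundness of your argument.

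Your plan is correct and is in fact the natural one. The definition $\varphi^D := (\widetilde{\varphi\sbullet D})^{-1}\pcirc\varphi\pcirc\widetilde{D}$ immediately yields existence and uniqueness; your checks that $\varphi^D$ restricts to the inclusion on $A$ (via $\varphi\pcirc\Phi_D=\Phi_{\varphi\sbullet D}$) and that $\ord(\varphi^D(s_i))\geq 1$ (via compatibility of $\widetilde{\varphi\sbullet D}$ with the augmentation) are the right ones. The combinatorial formula does indeed fall out: write $\varphi(\bfs^{f+\nu})=\varphi(\bfs^f)\cdot\varphi(\bfs^\nu)$, rewrite $\varphi(\bfs^\nu)=\widetilde{\varphi\sbullet D}(\varphi^D(\bfs^\nu))$ using $\widetilde{D}(\bfs^\nu)=\bfs^\nu$ and the defining relation, then expand $\widetilde{\varphi\sbullet D}$ via $\Phi_{\varphi\sbullet D}=\varphi\pcirc\Phi_D$ and read off the coefficient of $\bft^e$.

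One small point in (ii): the uniqueness clause in (i), as stated, only says that the second factor $\varphi^{D\pcirc E}$ is determined once the first factor $\varphi\sbullet(D\pcirc E)$ is fixed; it does not a priori assert uniqueness of a decomposition $\widetilde{G}\pcirc\psi$ with $G\in\HS^q_k(A;\nabla)$ and $\psi$ a substitution map. To close the loop you should first observe that restricting your identity $\widetilde{(\varphi\sbullet D)\pcirc(\varphi^D\sbullet E)}\pcirc(\varphi^D)^E=\varphi\pcirc\widetilde{D\pcirc E}$ to $A$ gives $\Phi_{(\varphi\sbullet D)\pcirc(\varphi^D\sbullet E)}=\varphi\pcirc\Phi_{D\pcirc E}=\Phi_{\varphi\sbullet(D\pcirc E)}$ (since $(\varphi^D)^E|_A$ is the inclusion), hence $(\varphi\sbullet D)\pcirc(\varphi^D\sbullet E)=\varphi\sbullet(D\pcirc E)$; \emph{then} the uniqueness of (i) forces $(\varphi^D)^E=\varphi^{D\pcirc E}$. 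This is essentially what you mean, but it is worth making the two-step structure explicit. Parts (iii) and (iv) are handled correctly.
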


\section{Main results}

\subsection{The derivations associated with a Hasse--Schmidt derivation}

In this section $k$ will be a commutative ring, $A$ a commutative $k$-algebra, $R=\End_k(A)$, $\bfs = \{s_1,\dots,s_p\}$ a set of variables and $\Delta \subset \N^p$ a non-empty co-ideal.

\begin{lemma} \label{lemma:leibniz-<>-HS}
Let $\fd:k[[\bfs]]_\Delta \to k[[\bfs]]_\Delta$ be a $k$-derivation and $D\in \HS^p_k(A;\Delta)$ a HS-derivation. Then, for each $a\in A[[\bfs]]_\Delta$ we have
$\fd_R(D) a = \widetilde{\fd_R(D)}(a) D + \widetilde{D}(a) \fd_R(D)$.
\end{lemma}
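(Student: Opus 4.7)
The plan is to apply the derivation $\fd_R$ to the fundamental commutation relation satisfied by $D$ and manipulate using Lemma \ref{lemma:leibniz-<>}.

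First I would invoke Proposition \ref{prop:caracter_HS}(c), which tells us that
$$D\, a = \widetilde{D}(a)\, D$$
for every $a\in A[[\bfs]]_\Delta$. Here $a$ and $\widetilde{D}(a)$ are viewed as elements of $R[[\bfs]]_\Delta$ through the algebra map $A[[\bfs]]_\Delta \hookrightarrow R[[\bfs]]_\Delta$ (via left multiplication), and $\fd_R$ restricted to $A[[\bfs]]_\Delta$ agrees with $\fd_A$ under this inclusion.

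Next I would apply $\fd_R$ to both sides of this identity and use that $\fd_R$ is an $(R;R)$-linear derivation of $R[[\bfs]]_\Delta$. The left-hand side becomes
$$\fd_R(D\, a) = \fd_R(D)\, a + D\, \fd_A(a),$$
and on the right-hand side we get
$$\fd_R\bigl(\widetilde{D}(a)\, D\bigr) = \fd_A\bigl(\widetilde{D}(a)\bigr)\, D + \widetilde{D}(a)\, \fd_R(D).$$
Equating these two expressions gives
$$\fd_R(D)\, a = \bigl[\fd_A\bigl(\widetilde{D}(a)\bigr) - \widetilde{D}(\fd_A(a))\bigr]\, D - \bigl(D\, \fd_A(a) - \widetilde{D}(\fd_A(a))\, D\bigr) + \widetilde{D}(a)\, \fd_R(D),$$
where I have used $\widetilde{D}(\fd_A(a))\,D$ as a bridge term on both sides.

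The final step is to recognize two things. First, applying Proposition \ref{prop:caracter_HS}(c) once more to the element $\fd_A(a)\in A[[\bfs]]_\Delta$, we have $D\,\fd_A(a) = \widetilde{D}(\fd_A(a))\,D$, so the parenthesized correction term vanishes. Second, by Lemma \ref{lemma:leibniz-<>} we have $\widetilde{\fd_R(D)} = [\fd_A,\widetilde{D}]$, so
$$\widetilde{\fd_R(D)}(a) = \fd_A\bigl(\widetilde{D}(a)\bigr) - \widetilde{D}(\fd_A(a)).$$
Substituting this yields the desired identity. There is no real obstacle here: the argument is a routine bookkeeping application of the Leibniz rule plus the two facts already proved, namely the HS-commutation relation and the bracket formula of Lemma \ref{lemma:leibniz-<>}. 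The only point requiring care is the implicit identification of elements of $A[[\bfs]]_\Delta$ with elements of $R[[\bfs]]_\Delta$ so that the derivation property of $\fd_R$ applies uniformly.
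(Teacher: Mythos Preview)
Your proof is correct and follows essentially the same route as the paper's: apply the derivation $\fd_R$ to the HS commutation relation $D\,a=\widetilde{D}(a)\,D$ from Proposition~\ref{prop:caracter_HS}(c), expand both sides via the Leibniz rule, and identify $\fd_A(\widetilde{D}(a))-\widetilde{D}(\fd_A(a))$ with $\widetilde{\fd_R(D)}(a)$ using Lemma~\ref{lemma:leibniz-<>}. The paper merely arranges the computation as a single chain of equalities starting from $\fd_R(D)\,a=\fd_R(D\,a)-D\,\fd_A(a)$, whereas you introduce an explicit bridge term; the content is identical.
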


\begin{proof} By using Lemma \ref{lemma:leibniz-<>} we have:
\begin{eqnarray*}
&
 \fd_R(D) a = \fd_R (D a) - D \fd_A (a) = \fd_R (\widetilde{D}(a) D) - \widetilde{D}(\fd_A (a)) D = &
\\  
& \fd_A (\widetilde{D}(a)) D +
\widetilde{D}(a) \fd_R (D) - \widetilde{D}(\fd_A (a)) D=
\widetilde{\fd_R (D)}(a)D+  \widetilde{D}(a) \fd_R (D).
& 
\end{eqnarray*}
\end{proof}

From now on, we will denote to simplify  $\fd_R = \fd$ and $\fd_A= \fd$. 

\begin{prop} \label{prop:exist-epsilon(D)} 
Let $\fd:k[[\bfs]]_\Delta \to k[[\bfs]]_\Delta$ be a $k$-derivation. Then, for any $D\in\HS^p_k(A;\Delta)$ we have
$\varepsilon^\fd(D), \overline{\varepsilon}^\fd(D)\in \Der_k(A)[[\bfs]]_{\Delta,+}=\Der_k(A)[[\bfs]]_\Delta \cap R[[\bfs]]_{\Delta,+}$.
\end{prop}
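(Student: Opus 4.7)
The plan is to invoke the characterization in Lemma~\ref{lemma:carac-der-bfs}: to conclude $\varepsilon^\fd(D)\in\Der_k(A)[[\bfs]]_\Delta$, it suffices to verify $[\varepsilon^\fd(D),a] = \widetilde{\varepsilon^\fd(D)}(a)$ for every $a\in A[[\bfs]]_\Delta$. The two ingredients will be Lemma~\ref{lemma:leibniz-<>-HS}, which tells us how $\fd(D)$ interacts with left multiplication by $a$, and Proposition~\ref{prop:caracter_HS}(c), whose identity $Da=\widetilde{D}(a)D$ yields the key algebraic move $D^*\widetilde{D}(a) = aD^*$.

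Concretely, I would begin by expanding
$$\varepsilon^\fd(D)\,a \;=\; D^*\fd(D)\,a \;=\; D^*\widetilde{\fd(D)}(a)\,D + D^*\widetilde{D}(a)\,\fd(D) \;=\; D^*\widetilde{\fd(D)}(a)\,D + a\,\varepsilon^\fd(D),$$
where the middle equality is Lemma~\ref{lemma:leibniz-<>-HS} and the last uses $D^*\widetilde{D}(a)=aD^*$. Hence $[\varepsilon^\fd(D),a] = D^*\,\widetilde{\fd(D)}(a)\,D$. On the other hand, since $r\mapsto \widetilde{r}$ is a $k[[\bfs]]_\Delta$-algebra isomorphism (see \ref{nume:widetilde-r}) and Proposition~\ref{prop:caracter_HS}(c) also yields $\widetilde{D}^{-1}(b) = D^* b D$ for $b\in A[[\bfs]]_\Delta$, one computes
$$\widetilde{\varepsilon^\fd(D)}(a) \;=\; \widetilde{D}^{-1}\bigl(\widetilde{\fd(D)}(a)\bigr) \;=\; D^*\,\widetilde{\fd(D)}(a)\,D,$$
matching the bracket. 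Lemma~\ref{lemma:carac-der-bfs} then gives $\varepsilon^\fd(D)\in\Der_k(A)[[\bfs]]_\Delta$.

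For $\overline{\varepsilon}^\fd(D)$, I would exploit the identity $\overline{\varepsilon}^\fd(D) = D\,\varepsilon^\fd(D)\,D^*$ noted after Definition~\ref{defi:varepsilon-fd}: the corresponding map $\widetilde{\overline{\varepsilon}^\fd(D)} = \widetilde{D}\circ\widetilde{\varepsilon^\fd(D)}\circ\widetilde{D}^{-1}$ is the conjugation of a $k[[\bfs]]_\Delta$-derivation of $A[[\bfs]]_\Delta$ by an algebra automorphism, hence still such a derivation; Lemma~\ref{lemma:carac-der-bfs} in the reverse direction then yields $\overline{\varepsilon}^\fd(D)\in\Der_k(A)[[\bfs]]_\Delta$. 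To obtain the `$+$' refinement, one observes that $\fd$ preserves the augmentation ideal in the cases of interest (notably the Euler derivations $\chi^i$ and $\bchi$, since $\chi^i(\bfs^\alpha)=\alpha_i\bfs^\alpha$), so $\fd(D)=\sum_\alpha D_\alpha\,\fd(\bfs^\alpha)$ has vanishing constant term and the same holds after multiplication by $D^*$ on either side. The main obstacle I anticipate is purely bookkeeping: keeping straight which products live in the noncommutative ring $R[[\bfs]]_\Delta$, which $\widetilde{\,\cdot\,}$-maps act as endomorphisms of $A[[\bfs]]_\Delta$, and which identifications are being used, so that the cancellation $D^*\widetilde{D}(a) = aD^*$ is applied cleanly.
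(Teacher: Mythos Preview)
Your argument is correct and follows the paper's route: use Lemma~\ref{lemma:leibniz-<>-HS} to compute how $\varepsilon^\fd(D)$ commutes past $a\in A[[\bfs]]_\Delta$, then invoke Lemma~\ref{lemma:carac-der-bfs} (the paper carries out the direct computation for $\fd(D)D^*$ and declares the other case ``completely similar'', whereas you do $D^*\fd(D)$ directly and obtain $\overline{\varepsilon}^\fd(D)$ by conjugation---these are interchangeable). Your caution about the `$+$' refinement is also appropriate: the paper's own proof does not address that point explicitly either.
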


\begin{proof} Remember that $\overline{\varepsilon}^\fd(D)  = \fd(D) D^*$ and $ \varepsilon^\fd(D) =
D^* \fd(D)$ (Definition \ref{defi:varepsilon-fd}).
We will use Lemma \ref{lemma:carac-der-bfs} and Lemma \ref{lemma:leibniz-<>-HS}. For any $a\in A[[\bfs]]_\Delta$ we have:
\begin{eqnarray*}
&  \left(\fd(D) D^*\right) a = \fd(D) \widetilde{D^*}(a) D^* = 
\left[ \widetilde{\fd(D)}(\widetilde{D^*}(a))D + \widetilde{D}(\widetilde{D^*}(a)) \fd(D) \right]D^*=
&
\\
& \widetilde{\fd(D)}(\widetilde{D^*}(a))  + a \fd(D) D^* =
\widetilde{\left(\fd(D) D^*\right)}(a) + a \left(\fd(D) D^*\right),
\end{eqnarray*}
and so
$ [\fd(D) D^*, a] = \widetilde{(\fd(D) D^*)}(a)
$
and $\fd(D) D^* \in\Der_k(A)[[\bfs]]_\Delta$. The proof for $\varepsilon^\fd(D)$ is completely similar.
\end{proof}

\begin{exam} If $D\in \HS_k(A;m)$ is a $1$-variate HS-derivation of length $m$, then:
$$ \varepsilon_1(D)= D_1,\ \varepsilon_2(D) = 2 D_2 - D_1^2,\ \varepsilon_3(D) = 3 D_3 - 2 D_1 D_2 - D_2 D_1 + D_1^3,\dots
$$
\end{exam}

Let us recall that the map $\xi: R[[\bfs]]_{\Delta,+} \to \U^{p+1}(R;\Delta \times \{0,1\})$ has been defined in Lemma \ref{lemma:xi}. The proof of the following lemma is clear.

\begin{lemma} \label{lemma:xi-HS}
For each $\delta \in \Der_k(A)[[\bfs]]_{\Delta,+} $ we have $\xi(\delta) \in \HS^{p+1}_k(A;\Delta \times \{0,1\})$.  
\end{lemma}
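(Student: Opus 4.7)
The plan is to verify that $\xi(\delta)$ satisfies the defining Leibniz identities, which I would do using the characterization from Proposition \ref{prop:caracter_HS}: it suffices to show that $\widetilde{\xi(\delta)}$ is a $k[[\bfs,\tau]]_{\Delta\times\{0,1\}}$-algebra endomorphism of $A[[\bfs,\tau]]_{\Delta\times\{0,1\}}$ compatible with the natural augmentation. Compatibility with augmentation is automatic since the zeroth coefficient of $\xi(\delta)$ is the identity. The key computational point exploits the fact that $\Delta\times\{0,1\}$ forces $\tau^{2}=0$ in our quotient ring.

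Explicitly, write $\xi(\delta) = 1 + \delta\,\tau$ with $\delta\in\Der_k(A)[[\bfs]]_{\Delta,+}$, and regard an arbitrary element of $A[[\bfs,\tau]]_{\Delta\times\{0,1\}}$ as $a'+a''\tau$ with $a',a''\in A[[\bfs]]_\Delta$. Unwinding the definition of $\widetilde{(-)}$ from \ref{nume:widetilde-r}, one finds
\[
\widetilde{\xi(\delta)}(a'+a''\tau) = a' + \bigl(\widetilde{\delta}(a') + a''\bigr)\tau,
\]
which is manifestly $k[[\bfs,\tau]]_{\Delta\times\{0,1\}}$-linear. To check multiplicativity, I would compute both sides of
\[
\widetilde{\xi(\delta)}\bigl((a'+a''\tau)(b'+b''\tau)\bigr) = \widetilde{\xi(\delta)}(a'+a''\tau)\cdot\widetilde{\xi(\delta)}(b'+b''\tau)
\]
using $\tau^2=0$; after cancellation, the identity reduces exactly to the Leibniz rule $\widetilde{\delta}(a'b') = \widetilde{\delta}(a')b' + a'\widetilde{\delta}(b')$. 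But this is precisely the content of Lemma \ref{lemma:carac-der-bfs}, equivalence (1)$\Leftrightarrow$(3), applied to $\delta\in\Der_k(A)[[\bfs]]_\Delta$.

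Alternatively, a direct coefficient check works just as well: the only nontrivial Leibniz identity to verify for $\xi(\delta)$ occurs at multi-indices $(\alpha,1)$ with $|\alpha|>0$, where the sum $\sum_{\beta_1+\beta_2=(\alpha,1)} \xi(\delta)_{\beta_1}(x)\,\xi(\delta)_{\beta_2}(y)$ collapses (because $\xi(\delta)_{(\gamma,0)}=0$ for $\gamma\neq 0$ and $\xi(\delta)_{(0,1)}=0$) to $\delta_\alpha(x)y + x\delta_\alpha(y)$, matching $\delta_\alpha(xy)$ exactly by the derivation property of $\delta_\alpha$. No real obstacle is anticipated; the statement is essentially a repackaging of the fact that ``derivation = algebra map modulo a square-zero element'', and the main point is simply to keep track of the double-indexed support carefully.
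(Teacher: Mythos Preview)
Your proposal is correct; both the algebra-endomorphism verification via Proposition~\ref{prop:caracter_HS} and the direct coefficient check are sound, and your computations are accurate (in particular the formula $\widetilde{\xi(\delta)}(a'+a''\tau)=a'+(\widetilde{\delta}(a')+a'')\tau$ and its reduction to the Leibniz rule for $\widetilde{\delta}$). The paper itself offers no argument beyond declaring the proof ``clear'', so there is nothing substantive to compare against---your write-up simply spells out what the author left implicit.
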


So we have a group homomorphism $\xi: \Der_k(A)[[\bfs]]_{\Delta,+} \to \HS^{p+1}_k(A;\Delta \times \{0,1\})$ whose image is the set of $D\in \HS^{p+1}_k(A;\Delta \times \{0,1\})$ such that $\supp D \subset \{(0,0)\} \cup ((\Delta \setminus \{0\}) \times \{1\})$.
\medskip

The following proposition provides a characterization of HS-derivations in cha\-rac\-te\-ris\-tic $0$.

\begin{prop} \label{prop:characr-HS-Q}
Assume that $\QQ \subset k$, $R=\End_k(A)$ and $D\in \U^p(R;\Delta)$. The following properties are equivalent:
\begin{enumerate}
\item[(a)] $D\in \HS_k^p(A;\Delta)$.
\item[(b)] $\varepsilon^\fd(D) \in \Der_k(A)[[\bfs]]_\Delta$ for all $k$-derivations $\fd:k[[\bfs]]_\Delta \to k[[\bfs]]_\Delta$.
\item[(c)] $\varepsilon(D) \in \Der_k(A)[[\bfs]]_\Delta$.
\end{enumerate} 
\end{prop}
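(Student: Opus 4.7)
The structure of the argument is the standard triangle: (a)$\Rightarrow$(b) is immediate from Proposition \ref{prop:exist-epsilon(D)}, since that result asserts $\varepsilon^\fd(D)\in\Der_k(A)[[\bfs]]_\Delta$ for \emph{every} $k$-derivation $\fd$ of $k[[\bfs]]_\Delta$ once $D\in\HS_k^p(A;\Delta)$; and (b)$\Rightarrow$(c) is a specialization obtained by taking $\fd=\bchi$, since by construction $\varepsilon^\bchi=\varepsilon$. Neither of these steps uses the hypothesis $\QQ\subset k$.

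The substantive direction is (c)$\Rightarrow$(a). Assume $\varepsilon(D)\in\Der_k(A)[[\bfs]]_\Delta$. My plan is to recover the Leibniz-type identities for the $D_\alpha$ directly from the recursive formula (\ref{eq:explicit-varepsilon}), namely
\[
 |\alpha|\, D_\alpha \;=\; \sum_{\substack{\scriptscriptstyle \beta+\gamma=\alpha\\\scriptscriptstyle |\gamma|>0}} D_\beta\, \varepsilon_\gamma(D),
\]
and to argue by induction on $|\alpha|$. For $\alpha=0$ the identity $D_0=\Id$ is trivially a Leibniz identity of HS-type. For $|\alpha|>0$, I would apply $D_\alpha$ to a product $xy$, use the recursion above to replace $|\alpha|D_\alpha(xy)$ with a sum of $D_\beta(\varepsilon_\gamma(D)(xy))$, and then exploit the standing hypothesis that each coefficient $\varepsilon_\gamma(D)$ is a classical $k$-derivation to split $\varepsilon_\gamma(D)(xy)=\varepsilon_\gamma(D)(x)\,y+x\,\varepsilon_\gamma(D)(y)$. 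Using the inductive hypothesis on each $D_\beta$ (valid since $|\beta|<|\alpha|$ in every term) and re-applying the recursion backwards to collapse the inner sums, both resulting double sums should reorganize into $\sum_{\beta'+\beta''=\alpha}|\beta'|\,D_{\beta'}(x)D_{\beta''}(y)$ and $\sum_{\beta'+\beta''=\alpha}|\beta''|\,D_{\beta'}(x)D_{\beta''}(y)$ respectively. Adding these and using $|\beta'|+|\beta''|=|\alpha|$ yields
\[
 |\alpha|\,D_\alpha(xy) \;=\; |\alpha| \sum_{\scriptscriptstyle \beta'+\beta''=\alpha} D_{\beta'}(x)\,D_{\beta''}(y).
\]

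The main (in fact the only) obstacle is the final division by $|\alpha|$, which is exactly where $\QQ\subset k$ is indispensable: without characteristic $0$ the integer $|\alpha|$ need not be a unit in $k$, and one cannot conclude the Leibniz identity from its multiple. Note that this approach does not require invoking Proposition \ref{prop:varepsilon-bijective-Q} or the auxiliary map $\xi$; it uses only the recursion (\ref{eq:explicit-varepsilon}), the assumption that the coefficients of $\varepsilon(D)$ are derivations, and the induction hypothesis. An alternative route would be to compare $D$ with the unique HS-derivation $D'\in\HS_k^p(A;\Delta)$ with $\varepsilon(D')=\varepsilon(D)$ produced by the surjectivity part of Proposition \ref{prop:varepsilon-bijective-Q} and invoke injectivity of $\varepsilon$ to deduce $D=D'$, but this is essentially a repackaging of the same induction.
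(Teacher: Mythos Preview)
Your proof of (c)$\Rightarrow$(a) is correct. The induction works exactly as you describe: after splitting $\varepsilon_\gamma(D)(xy)$ by the derivation property and applying the Leibniz rule for each $D_\beta$ with $|\beta|<|\alpha|$, the inner sums collapse via the recursion $|\alpha'|D_{\alpha'}=\sum_{\mu+\gamma=\alpha',\,|\gamma|>0}D_\mu\,\varepsilon_\gamma(D)$ (which is an identity in $R$, valid for \emph{all} $\alpha'$, including $\alpha'=\alpha$), and one obtains $|\alpha|D_\alpha(xy)=|\alpha|\sum D_{\beta'}(x)D_{\beta''}(y)$, whence the conclusion by invertibility of $|\alpha|$.

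Your route differs from the paper's. The paper does not run an induction on $|\alpha|$; instead it uses the characterization of Proposition~\ref{prop:caracter_HS}, namely $D\in\HS^p_k(A;\Delta)\Leftrightarrow D\,a=\widetilde{D}(a)\,D$ for all $a\in A$, and argues as follows: the element $D\,a-\widetilde{D}(a)\,D$ lies in the augmentation ideal $R[[\bfs]]_{\Delta,+}$, and since $\QQ\subset k$ such an element vanishes as soon as $\bchi_R$ annihilates it. A short computation using $\bchi_R(D)=D\,\delta$ (with $\delta=\varepsilon(D)$), Lemma~\ref{lemma:leibniz-<>}, and the hypothesis $\delta\in\Der_k(A)[[\bfs]]_\Delta$ (so $[\delta,a]=\widetilde{\delta}(a)$) yields $\bchi_R(D\,a-\widetilde{D}(a)\,D)=0$. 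This is a coordinate-free argument that avoids any bookkeeping of indices and makes transparent that $\QQ\subset k$ is used only once, via injectivity of $\bchi_R$ on the augmentation ideal. Your coefficient-wise induction is more elementary and self-contained (it does not invoke Proposition~\ref{prop:caracter_HS} or Lemma~\ref{lemma:leibniz-<>}), at the cost of some manipulation of triple sums; the two approaches are genuinely different in flavour but equivalent in strength.
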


\begin{proof} The implication (a) $\Rightarrow$ (b) comes from Proposition \ref{prop:exist-epsilon(D)} and (b) $\Rightarrow$ (c) is obvious.
Let us prove (c) $\Rightarrow$ (a). Write $\delta = \varepsilon(D)$, i.e. $\bchi_R(D) = D\, \delta$. After Proposition \ref{prop:caracter_HS}, we need to prove that $D\, a= \widetilde{D}(a) D$ for all $a\in A$, and since $D\, a- \widetilde{D}(a) D$ belongs to the augmentation ideal of $R[[\bfs]]_\Delta$ and $\QQ\subset k$, it is enough to prove that $\bchi_R\left(D\, a- \widetilde{D}(a) D\right)=0$. By using that $\bchi_A(a)=0$ and Lemma \ref{lemma:leibniz-<>} we have:
\begin{eqnarray*}
& 
 \bchi_R\left(D\, a- \widetilde{D}(a) D\right) = \bchi_R(D)\, a + D\, \bchi_A(a) - \bchi_A(\widetilde{D}(a)) D - \widetilde{D}(a) \bchi_R(D) = &
\\
& D\, \delta\, a  - \widetilde{\bchi_R(D)}(a) D - \widetilde{D}(\bchi_A(a))D
- \widetilde{D}(a) D\, \delta =
&
\\
&
 D\, a\, \delta + D\, \widetilde{\delta}(a) - \widetilde{\bchi_R(D)}(a) D  -\widetilde{D}(a) D\, \delta =
&
\\
&
\widetilde{D}(a) D\, \delta + \widetilde{D}(\widetilde{\delta}(a)) D -\widetilde{(D\, \delta)}(a) D - \widetilde{D}(a) D\, \delta = 0.
\end{eqnarray*}
\end{proof}

\begin{thm} \label{thm:varepsilon-HS-Q-bijective}
Assume that $\QQ \subset k$. Then, all the maps in the following commutative diagram:
$$
\begin{tikzcd}
\HS^p_k(A;\Delta)  
 \ar[]{r}{\bepsilon} \ar{dr}{\varepsilon } & \HH^p(R;\Delta) \bigcap \left(\Der_k(A)[[\bfs]]_{\Delta,+}   \right)^p \ar[]{d}{\bSigma} 
 \\
 & \Der_k(A)[[\bfs]]_{\Delta,+}
\end{tikzcd}
$$
are bijective.
\end{thm}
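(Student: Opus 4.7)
The plan is to deduce the theorem from the three earlier results already available: Proposition~\ref{prop:varepsilon-bijective-Q} (which gives the bijectivity of $\bepsilon$, $\bSigma$ and $\varepsilon$ on the larger sets $\U^p(R;\Delta)$ and $R[[\bfs]]_{\Delta,+}$), Proposition~\ref{prop:exist-epsilon(D)} (which says that the coefficients of each $\varepsilon^i(D)$ are classical derivations when $D$ is HS), and Proposition~\ref{prop:characr-HS-Q} (which conversely says that, in characteristic $0$, any $D\in\U^p(R;\Delta)$ with $\varepsilon(D)\in\Der_k(A)[[\bfs]]_\Delta$ must be HS). In this sense the theorem is essentially a packaging result.

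First I would check well-definedness of the two horizontal/diagonal restrictions. For the top arrow: given $D\in\HS^p_k(A;\Delta)$, Proposition~\ref{prop:exist-epsilon(D)} applied to $\fd=\chi^i$ gives $\varepsilon^i(D)\in\Der_k(A)[[\bfs]]_{\Delta,+}$ for $i=1,\dots,p$, while Lemma~\ref{lemma:crossed-der-epsilon} places $\bepsilon(D)=\{\varepsilon^i(D)\}$ in $\HH^p(R;\Delta)$; hence $\bepsilon(D)$ lies in the intersection. For $\bSigma$: a sum of elements in $\Der_k(A)[[\bfs]]_{\Delta,+}$ lies in $\Der_k(A)[[\bfs]]_{\Delta,+}$. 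For $\varepsilon=\bSigma\pcirc\bepsilon$: well-definedness is then automatic, and is also a direct consequence of Proposition~\ref{prop:exist-epsilon(D)} applied to $\fd=\bchi$.

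Next I would prove the bijectivity. Injectivity of all three restricted maps is inherited for free from Proposition~\ref{prop:varepsilon-bijective-Q}. For surjectivity the key step is the diagonal arrow $\varepsilon$: given any $\delta\in\Der_k(A)[[\bfs]]_{\Delta,+}$, Proposition~\ref{prop:varepsilon-bijective-Q} produces a unique $D\in\U^p(R;\Delta)$ with $\varepsilon(D)=\delta$; since $\delta\in\Der_k(A)[[\bfs]]_\Delta$, Proposition~\ref{prop:characr-HS-Q}, (c)$\Rightarrow$(a), forces $D\in\HS^p_k(A;\Delta)$. Surjectivity of $\bepsilon$ is similar: given $\{\delta^i\}$ in the intersection, Proposition~\ref{prop:varepsilon-bijective-Q} yields $D\in\U^p(R;\Delta)$ with $\bepsilon(D)=\{\delta^i\}$; taking $\bSigma$ of both sides shows $\varepsilon(D)=\sum_i\delta^i\in\Der_k(A)[[\bfs]]_{\Delta,+}$, and Proposition~\ref{prop:characr-HS-Q} again upgrades $D$ to an HS-derivation. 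Surjectivity of $\bSigma$ then follows from commutativity and surjectivity of $\varepsilon$.

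There is no substantial obstacle: the heavy lifting was done in Propositions~\ref{prop:varepsilon-bijective-Q} and \ref{prop:characr-HS-Q}, and the present statement just amounts to verifying that the known bijections restrict well. The only point that deserves care is ensuring that the intersection $\HH^p(R;\Delta)\cap(\Der_k(A)[[\bfs]]_{\Delta,+})^p$ is really the correct codomain for $\bepsilon$, i.e.\ that the derivation condition on each $\varepsilon^i(D)$ (supplied by Proposition~\ref{prop:exist-epsilon(D)}) and the integrability identity (supplied by Lemma~\ref{lemma:crossed-der-epsilon}) together cut out exactly the image of $\HS^p_k(A;\Delta)$.
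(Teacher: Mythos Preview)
Your proposal is correct and follows essentially the same route as the paper, which simply records that the theorem ``is a consequence of Proposition~\ref{prop:varepsilon-bijective-Q} and Proposition~\ref{prop:characr-HS-Q}.'' You have merely unpacked this one-line proof, making explicit the well-definedness check (via Proposition~\ref{prop:exist-epsilon(D)} and Lemma~\ref{lemma:crossed-der-epsilon}) and the restriction argument for bijectivity.
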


\begin{proof} It is a consequence of Proposition \ref{prop:varepsilon-bijective-Q} and Proposition \ref{prop:characr-HS-Q}.
\end{proof}

A similar result holds for $\overline{\varepsilon}$ instead of $\varepsilon$.

\begin{nota} Let us notice that, in Theorem \ref{thm:varepsilon-HS-Q-bijective}, $\HS^p_k(A;\Delta)$ carries a group structure (non-commutative in general) and an action of substitution maps, and on the other hand $\Der_k(A)[[\bfs]]_{\Delta,+}$ carries an $A[[\bfs]]_\Delta$-module structure and a $k[[\bfs]]_\Delta$-Lie algebra structure, but the bijection $\varepsilon: \HS^p_k(A;\Delta) \xrightarrow{\sim} \Der_k(A)[[\bfs]]_{\Delta,+}$ is not compatible with these structures. The formulas expressing the behavior of $\varepsilon$ with respect to the group operation on HS-derivations or the behavior of $\varepsilon^{-1}$ with respect to the addition of power series with coefficients in $\Der_k(A)$, turn out to be complicated and have a similar flavor to Baker-Campbell-Hausdorff formula.
\end{nota}

\subsection{The behavior under the action of substitution maps}

\begin{defi} \label{def:D-element}
 Let $S$ be a $k$-algebra over $A$, $r\in\U^p(S;\Delta)$, $D\in\HS^p_k(A;\Delta)$, $r'\in S[[\bfs]]_\Delta$ and $\delta \in \Der_k(A)[[\bfs]]_\Delta$. We say that
\begin{enumerate}
\item[-)] $r$ is a {\em $D$-element}  if 
$ r\, a = \widetilde{D}(a)\,  r$ for all $a\in A[[\bfs]]_\Delta$. 
\item[-)] $r'$ is a {\em $\delta$-element} if
$r'\,  a = a r' + \widetilde{\delta}(a)\, 1_S$ for all $a\in A[[\bfs]]_\Delta$.
\end{enumerate} 
\end{defi}

It is clear that $D\in \HS^p_k(A;\Delta) \subset \U^p(\End_k(A);\Delta)$ is a $D$-element. For $D=\mathbb{I}$ the identity HS-derivation, a $r\in\U^p(S;\Delta)$ is an {\em $\mathbb{I}$-element} if and only if $r$ commutes with all $a\in A[[\bfs]]_\Delta$. If $E\in \HS^p_k(A;\Delta)$ is another HS-derivation, $r\in\U^p(S;\Delta)$ is a $D$-element and $s\in\U^p(S;\Delta)$ is an $E$-element, then $rs$ is a $(D\pcirc E)$-element. 
\medskip

The following lemma provides a characterization of $D$-elements. Its proof is easy and it is left to the reader.

\begin{lemma} \label{lemma:charact-D-elements}
With the above notations, 
for each $r=\sum_\alpha r_\alpha \bfs^\alpha\in\U^p(S;\Delta)$ the following properties are equivalent:
\begin{enumerate}
\item[-)] $r$ is a $D$-element.
\item[-)] $ b r  =  r\,  \widetilde{D^*}(b)$ for all $b\in A[[\bfs]]_\Delta$. 
\item[-)] $r^*$ is a $D^*$-element.
\item[-)] If $r=\sum_\alpha r_\alpha \bfs^\alpha$, we have $r_\alpha\,  a = \sum_{\beta+\gamma=\alpha} D_\beta(a) r_\gamma$ for all $a\in A$ and for all $\alpha \in\Delta$.
\item[-)] $ r\, a = \widetilde{D}(a) r$ for all $a\in A$.
\end{enumerate}
\end{lemma}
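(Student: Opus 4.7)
The plan is to prove the five equivalences by going around the short chain $(1)\Leftrightarrow(5)\Leftrightarrow(4)$ and then $(1)\Leftrightarrow(2)\Leftrightarrow(3)$, leaning on Proposition~\ref{prop:caracter_HS}, which gives us that $\widetilde{D}:A[[\bfs]]_\Delta\to A[[\bfs]]_\Delta$ is a $k[[\bfs]]_\Delta$-algebra automorphism with inverse $\widetilde{D^*}$, together with the fact that the variables $\bfs$ are central in $S[[\bfs]]_\Delta$ and that multiplication there is $\langle\bfs\rangle$-adically continuous. For $(1)\Leftrightarrow(5)$, one direction is tautological; for the converse, I would write an arbitrary $a=\sum_\alpha a_\alpha \bfs^\alpha$ as a convergent series and use the centrality of the $\bfs^\alpha$ to compute $r\,a=\sum_\alpha (r\,a_\alpha)\bfs^\alpha=\sum_\alpha \widetilde{D}(a_\alpha)\,r\,\bfs^\alpha=\widetilde{D}(a)\,r$.

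Next, $(5)\Leftrightarrow(4)$ is a direct coefficient comparison: expanding $r\,a=\widetilde{D}(a)\,r$ in powers of $\bfs$ for a single $a\in A$ gives exactly the identity $r_\alpha\,a=\sum_{\beta+\gamma=\alpha} D_\beta(a)\,r_\gamma$ in every degree $\alpha\in\Delta$. For $(1)\Leftrightarrow(2)$, since $\widetilde{D}$ is a bijection on $A[[\bfs]]_\Delta$ with inverse $\widetilde{D^*}$, the substitution $a=\widetilde{D^*}(b)$, equivalently $b=\widetilde{D}(a)$, transforms the identity $r\,a=\widetilde{D}(a)\,r$ into $r\,\widetilde{D^*}(b)=b\,r$, and this substitution ranges bijectively over $A[[\bfs]]_\Delta$. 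Finally, for $(2)\Leftrightarrow(3)$, I would multiply the identity $b\,r=r\,\widetilde{D^*}(b)$ of (2) on the left and on the right by $r^*$, use $r^*r=rr^*=1$, and obtain $r^*\,b=\widetilde{D^*}(b)\,r^*$, which is precisely the defining relation for $r^*$ to be a $D^*$-element; the reverse implication follows by the same manipulation.

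All steps are essentially formal and there is no genuine obstacle. The only point meriting a moment of care is the continuity argument used to pass from the equality on $A$ to the equality on all of $A[[\bfs]]_\Delta$ in $(1)\Leftrightarrow(5)$, which rests on centrality of $\bfs$ and continuity of multiplication in $S[[\bfs]]_\Delta$.
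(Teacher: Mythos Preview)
Your proof is correct and complete; the paper itself omits the argument, stating that it ``is easy and it is left to the reader.'' Your chain of equivalences $(1)\Leftrightarrow(5)\Leftrightarrow(4)$ together with $(1)\Leftrightarrow(2)\Leftrightarrow(3)$, using the bijectivity of $\widetilde{D}$ from Proposition~\ref{prop:caracter_HS} and the continuity/centrality argument for the passage from $A$ to $A[[\bfs]]_\Delta$, is exactly the straightforward verification the authors had in mind.
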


The following proposition reproduces Proposition 2.2.6 of \cite{nar_envelop}.

\begin{prop}   \label{prop:bullet-D-elements} 
Let $S$ be a $k$-algebra over $A$, $D\in\HS^p_k(A;\Delta)$, $\varphi:A[[\bfs]]_\Delta \to A[\bfu]]_\nabla$ a substitution map and $r\in\U^p(S;\Delta)$ a $D$-element. 
Then the following identities hold:
\begin{enumerate}
\item[(a)] $ \varphi_S(r)$ is a $(\varphi\sbullet D)$-element. 
\item[(b)] $\varphi_S(rr') = \varphi_S(r) \varphi^D_S(r')$ for all $r'\in R[[\bfs]]_\Delta$. In particular, 
$\varphi_S(r)^* = \varphi^D_S(r^*)$. 
\end{enumerate}
Moreover, if $E$ is an $A$-module and $S=\End_k(E)$, then the following identity holds:
\begin{enumerate}
\item[(c)] $\langle \varphi \sbullet r, \varphi^D_E(e) \rangle 
= \varphi_E \left( \langle r,e\rangle\right)$ for all $e\in E[[\bfs]]_\Delta$.
In other words:  $\varphi_E \pcirc \widetilde{r}=
\left(\varphi \sbullet \widetilde{r}\right) \pcirc \varphi^D_E$.
\end{enumerate}
\end{prop}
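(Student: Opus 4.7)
The plan is to prove the three assertions in the order (a), (b), (c), with (b) and (c) both reducing to (a) through the intertwining $\widetilde{\varphi\sbullet D}\pcirc \varphi^D = \varphi \pcirc \widetilde{D}$ of Proposition \ref{prop:varphi-D-main}, together with the fact that the substitution map $\varphi^D$ fixes $A$ pointwise.

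For (a) I would first check $\varphi_S(r)\cdot b = \widetilde{\varphi\sbullet D}(b)\cdot \varphi_S(r)$ for $b=a\in A$. Writing $\varphi_S(r) = \sum_\alpha \varphi(\bfs^\alpha)\, r_\alpha$ and using the coefficientwise $D$-element relation $r_\alpha\, a = \sum_{\beta+\gamma=\alpha} D_\beta(a)\, r_\gamma$ from Lemma \ref{lemma:charact-D-elements}, the product $\varphi_S(r)\cdot a$ reorganises as $\sum_{\beta,\gamma} \varphi(\bfs^\beta)\, D_\beta(a)\, \varphi(\bfs^\gamma)\, r_\gamma$ via $\varphi(\bfs^{\beta+\gamma})=\varphi(\bfs^\beta)\varphi(\bfs^\gamma)$ and the commutativity of $A[[\bft]]_\nabla$. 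This collects as $\varphi(\widetilde{D}(a))\cdot \varphi_S(r)$, and since $\varphi^D(a)=a$, one has $\varphi(\widetilde{D}(a))=\widetilde{\varphi\sbullet D}(\varphi^D(a))=\widetilde{\varphi\sbullet D}(a)$, as desired. To pass from constants $a\in A$ to arbitrary $b=\sum_e b_e\, \bft^e\in A[[\bft]]_\nabla$, one uses that the monomials $\bft^e$ lie in the central subring $k[[\bft]]_\nabla$ of $S[[\bft]]_\nabla$ and that $\widetilde{\varphi\sbullet D}$ is $k[[\bft]]_\nabla$-linear.

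For (b), expanding both sides gives $\varphi_S(r)\, \varphi^D_S(r') = \sum_{\alpha,\beta} \varphi(\bfs^\alpha)\, r_\alpha\, \varphi^D(\bfs^\beta)\, r'_\beta$ and $\varphi_S(rr') = \sum_{\alpha,\beta} \varphi(\bfs^\alpha)\varphi(\bfs^\beta)\, r_\alpha r'_\beta$, so the two agree once one knows $\varphi_S(r)\cdot\varphi^D(\bfs^\beta) = \varphi(\bfs^\beta)\cdot \varphi_S(r)$ for each $\beta$; this is exactly (a) applied to $b=\varphi^D(\bfs^\beta)$, combined with $\widetilde{\varphi\sbullet D}(\varphi^D(\bfs^\beta))=\varphi(\widetilde{D}(\bfs^\beta))=\varphi(\bfs^\beta)$. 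The particular case $\varphi_S(r)^*=\varphi^D_S(r^*)$ then falls out of $r'=r^*$. For (c) I would first note that on constants $e\in E$ the two maps $\varphi_E\pcirc \widetilde r$ and $(\varphi\sbullet\widetilde r)\pcirc \varphi^D_E$ already agree: this is the general compatibility (\ref{eq:tilde-varphi-M-varphi-F}) between $\widetilde{(-)}$ and substitution maps, with no input from $D$. For a general $e=\sum_\beta e_\beta \bfs^\beta$, the $k[[\bfs]]_\Delta$-linearity of $\widetilde r$ and the $(\varphi,A)$-linearity of $\varphi_E$ give $\varphi_E(\widetilde r(e))=\sum_\beta \varphi(\bfs^\beta)\,\varphi_E(\widetilde r(e_\beta))$; on the other side $\varphi^D_E(e)=\sum_\beta \varphi^D(\bfs^\beta)\, e_\beta$, and because $\varphi\sbullet r$ is a $(\varphi\sbullet D)$-element by (a), one has $(\varphi\sbullet\widetilde r)(\varphi^D(\bfs^\beta)\, e_\beta) = \widetilde{\varphi\sbullet D}(\varphi^D(\bfs^\beta))\,(\varphi\sbullet\widetilde r)(e_\beta) = \varphi(\bfs^\beta)\,\varphi_E(\widetilde r(e_\beta))$, matching the left-hand side term by term.

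The main obstacle will be bookkeeping the noncommutativity of $S[[\bft]]_\nabla$: $\varphi_S$ places the substituted monomials $\varphi(\bfs^\alpha)$ to the \emph{left} of the coefficients $r_\alpha$, while the $D$-element relation moves scalars past $r$ from the \emph{right} to the \emph{left} up to a $\widetilde D$-twist. The substitution map $\varphi^D$ is precisely the device that compensates for this left/right asymmetry, which explains why it must appear on the right factor in (b) and inside $\varphi^D_E$ in (c).
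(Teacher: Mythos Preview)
Your argument is correct. The paper itself does not prove this proposition: it simply quotes it from \cite{nar_envelop} (``The following proposition reproduces Proposition 2.2.6 of \cite{nar_envelop}''), so there is no in-text proof to compare against. Your approach---verifying the $(\varphi\sbullet D)$-element identity first on $a\in A$ via the coefficientwise relation of Lemma \ref{lemma:charact-D-elements} and the multiplicativity of $\varphi$, then deducing (b) and (c) from (a) through the intertwining $\widetilde{\varphi\sbullet D}\pcirc\varphi^D=\varphi\pcirc\widetilde D$ of Proposition \ref{prop:varphi-D-main}---is exactly the natural route and matches how the result is established in the cited reference. One small streamlining: once you have the identity $\varphi_S(r)\cdot a=\widetilde{\varphi\sbullet D}(a)\cdot\varphi_S(r)$ for $a\in A$, you can invoke Lemma \ref{lemma:charact-D-elements} directly rather than extending by hand to general $b\in A[[\bft]]_\nabla$; and for the step $\varphi(\widetilde D(a))=\widetilde{\varphi\sbullet D}(a)$ you may appeal to $\Phi_{\varphi\sbullet D}=\varphi\pcirc\Phi_D$ (stated just after diagram (\ref{eq:diag-funda-HS})) instead of going through $\varphi^D$.
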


Lemma \ref{lemma:leibniz-<>} and Proposition \ref{prop:exist-epsilon(D)} can be generalized in the following way.

\begin{prop} \label{prop:epsilon-D-elements}
Let $S$ be a $k$-algebra over $A$, $D\in\HS^p_k(A;\Delta)$, $r\in\U^p(S;\Delta)$ a $D$-element and  $\fd:k[[\bfs]]_\Delta \to k[[\bfs]]_\Delta$ a $k$-derivation.
Then, the following properties hold:
\begin{enumerate}
\item[(1)]
$\fd(r)\, a = \langle \fd(D), a\rangle\,  r + \langle D, a\rangle\,  \fd(r)
$ for all $a\in A$
\item[(2)] $\varepsilon^\fd(r)$ is a $\varepsilon^\fd(D)$-element and  $\overline{\varepsilon}^\fd(r)$ is a $\overline{\varepsilon}^\fd(D)$-element.
\end{enumerate}
\end{prop}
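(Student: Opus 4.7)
The plan is to mimic the proof of Lemma~\ref{lemma:leibniz-<>-HS} in order to obtain (1), and then combine (1) with the characterization of $r^*$ as a $D^*$-element provided by Lemma~\ref{lemma:charact-D-elements} to derive (2). For (1), I apply the derivation $\fd$ (viewed as $\fd_S$ on $S[[\bfs]]_\Delta$) to both sides of the defining relation $r\cdot a = \widetilde{D}(a)\cdot r$. By the Leibniz rule and the vanishing $\fd(a)=0$ for $a\in A$, the left-hand side becomes $\fd(r)\cdot a$, while the right-hand side becomes $\fd(\widetilde{D}(a))\cdot r + \widetilde{D}(a)\cdot\fd(r)$. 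Lemma~\ref{lemma:leibniz-<>} rewrites $\fd(\widetilde{D}(a))$ as $\widetilde{\fd(D)}(a)+\widetilde{D}(\fd(a)) = \widetilde{\fd(D)}(a) = \langle \fd(D),a\rangle$, which yields (1). Carrying the same calculation out for an arbitrary $a\in A[[\bfs]]_\Delta$ produces two extra terms $r\cdot\fd(a)$ and $\widetilde{D}(\fd(a))\cdot r$ that cancel because $r$ is a $D$-element applied to $\fd(a)\in A[[\bfs]]_\Delta$; hence (1) in fact holds for all $a\in A[[\bfs]]_\Delta$.

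For (2), starting with $\varepsilon^\fd(r)\cdot a = r^*\cdot \fd(r)\cdot a$ for $a\in A$, I substitute the extended version of (1) for $\fd(r)\cdot a$, obtaining $r^*\cdot \langle\fd(D),a\rangle\cdot r + r^*\cdot \langle D,a\rangle\cdot \fd(r)$. Both coefficients lie in $A[[\bfs]]_\Delta$, so by Lemma~\ref{lemma:charact-D-elements} I may push $r^*$ across them via $r^*b = \widetilde{D^*}(b)\cdot r^*$. Using that $s\mapsto \widetilde{s}$ is an algebra homomorphism, $\widetilde{D^*}\pcirc\widetilde{\fd(D)} = \widetilde{D^*\fd(D)} = \widetilde{\varepsilon^\fd(D)}$ and $\widetilde{D^*}\pcirc\widetilde{D} = \Id$, so the two summands collapse respectively to $\widetilde{\varepsilon^\fd(D)}(a)\cdot 1_S$ and $a\cdot\varepsilon^\fd(r)$. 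This is exactly the $\varepsilon^\fd(D)$-element identity. The computation for $\overline{\varepsilon}^\fd(r) = \fd(r)\,r^*$ is symmetric: I first push $r^*$ through $a$ via $r^*a = \widetilde{D^*}(a)\,r^*$, then apply the extended (1) to commute $\fd(r)$ past $\widetilde{D^*}(a)\in A[[\bfs]]_\Delta$, and the telescoping produces $\overline{\varepsilon}^\fd(r)\cdot a = a\cdot\overline{\varepsilon}^\fd(r) + \widetilde{\overline{\varepsilon}^\fd(D)}(a)\cdot 1_S$.

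I expect the main obstacle to be essentially notational: one has to keep straight the various avatars of the derivation $\fd$, acting on $k[[\bfs]]_\Delta$, on $A[[\bfs]]_\Delta$ via $\fd_A$, on $S[[\bfs]]_\Delta$ via $\fd_S$, and on $R[[\bfs]]_\Delta$ via $\fd_R$, and verify that they are compatible under the inclusion $A\to S$, so that the Leibniz rule can be freely applied across layers. A second point, which is a genuine technical step rather than mere bookkeeping, is the observation that although (1) is stated for $a\in A$, its proof upgrades verbatim to arbitrary $a\in A[[\bfs]]_\Delta$; this stronger statement is indispensable for the $\overline{\varepsilon}$ half of (2), where $\fd(r)$ must be commuted past the power series $\widetilde{D^*}(a)$ rather than merely past a constant in $A$.
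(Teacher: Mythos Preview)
Your proof is correct and follows essentially the same route as the paper: apply $\fd$ to the $D$-element identity and use Lemma~\ref{lemma:leibniz-<>} for (1); for (2) compute $r^*\fd(r)\,a$ via (1) and commute $r^*$ past the $A[[\bfs]]_\Delta$-coefficients using that $r^*$ is a $D^*$-element (Lemma~\ref{lemma:charact-D-elements}). The paper merely says ``the proof for $\overline{\varepsilon}^\fd(r)$ is similar''; your way of making this precise---extending (1) to all $a\in A[[\bfs]]_\Delta$ (the extra terms $r\,\fd(a)$ and $\widetilde{D}(\fd(a))\,r$ cancel exactly by the $D$-element relation applied to $\fd(a)$) and then commuting $\fd(r)$ past $\widetilde{D^*}(a)$---is a clean and legitimate realization of ``similar''. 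One tiny remark: for the $\varepsilon^\fd$ half you only need the original (1) with $a\in A$, not the extended version; the extension is genuinely used only for $\overline{\varepsilon}^\fd$.
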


\begin{proof} (1) Since $\delta(a)=0$ we have:
$$  \fd(r)\,  a = \delta( r\, a) =
 \fd \left(\widetilde{D}(a) r \right) =
 \fd\left(\langle D ,a\rangle\,  r\right) = \langle \fd(D) ,a\rangle\,  r +  \langle D ,a\rangle\,  \fd(r).
$$
(2) For all $a\in A$ we have:
\begin{eqnarray*}
& \varepsilon^\fd(r)\, a = r^*\, \fd(r)\, a \stackrel{\text{(1)}}{=} r^* \left(\widetilde{\fd(D)}(a)\, r +
\widetilde{D}(a)\, \fd(r)\right) = 
&
\\
&
\widetilde{D^*}(\widetilde{\fd(D)}(a)) r^*\, r + \widetilde{D^*}(\widetilde{D}(a)) r^* \fd(r)=
\widetilde{\varepsilon^\fd(D)}(a)\, 1_S + a\, \varepsilon^\fd(r).
\end{eqnarray*}
The proof for $\overline{\varepsilon}^\fd(r)$ is similar.
\end{proof}

Let us consider two sets of variables $\bfs=\{s_1,\dots,s_p\}$ and $\bfu=\{u_1,\dots,u_q\}$, and let us denote by $\{v^1,\dots,v^p\}$ the canonical basis of $\N^p$: $v^i_l = \delta_{il}$.

\begin{thm} \label{thm:epsilon-bullet}
For each non-empty co-ideals 
$\Delta\subset \N^p, \Omega \subset \N^q$, each substitution map $\varphi: A[[\bfs]]_\Delta \to A[[\bfu]]_\Omega$ and each HS-derivation $D\in \HS^p_k(A;\Delta)$, there exists a family 
$$\left\{ {\bf N}^{j,i}_{e,h}\ |\ 1\leq j\leq q, 1\leq i \leq p, e\in \Omega, h \in \Delta, |h|\leq |e|\right\} \subset A
$$ 
such that for any $k$-algebra $S$ over $A$ and any $D$-element $r\in \U^p(S;\Delta)$, we have:
\begin{equation}  \label{eq:varepsilon-varphi-of-D-element}
\varepsilon^j_e(\varphi\sbullet r) = \sum_{\substack{\scriptscriptstyle   0<|h|\leq |e|\\ \scriptscriptstyle i\in \supp h}}
{\bf N}^{j,i}_{e,h}
\varepsilon^i_h(r)\quad \forall e\in \Omega, \forall j=1,\dots,q. 
\end{equation}
Moreover, $\displaystyle {\bf N}^{j,i}_{e,h} = \sum_{\scriptscriptstyle f, g, \beta} g_j
{\bf C}_f(\varphi^D,\beta+h-v^i)D^*_\beta({\bf C}_g(\varphi,v^i))$, where $f,g\in \Omega$, $\beta \in \Delta$, $f + g = e$, $|\beta+h|-1\leq |f|$ and $g_j>0$, whenever $e_j, h_i > 0$, and ${\bf N}^{j,i}_{e,h} = 0$ otherwise.
\end{thm}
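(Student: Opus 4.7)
The plan is to begin from $\varepsilon^j(\varphi\sbullet r) = (\varphi\sbullet r)^*\,\chi^j_S(\varphi\sbullet r)$, where $\chi^j = u_j\,\partial/\partial u_j$ on $k[[\bfu]]_\Omega$, and to transform both factors until the right-hand side is an explicit $A$-linear combination of the $\varepsilon^i_h(r)$. Two structural inputs drive the argument: $(\varphi\sbullet r)^* = \varphi^D\sbullet r^*$ from Proposition \ref{prop:bullet-D-elements}(b), and the intertwining
\begin{equation*}
\widetilde{(\varphi\sbullet D)^*}\pcirc \varphi = \varphi^D\pcirc \widetilde{D^*},
\end{equation*}
which I obtain by applying Proposition \ref{prop:varphi-D-main}(i) to $(\varphi^D,D^*)$ and invoking $(\varphi^D)^{D^*} = \varphi^{D\pcirc D^*} = \varphi$ from (ii), together with $(\varphi\sbullet D)^* = \varphi^D\sbullet D^*$.

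Applying Leibniz for $\chi^j_A$ to $\varphi(\bfs^\alpha) = \prod_l \varphi(s_l)^{\alpha_l}$ gives
\begin{equation*}
\chi^j_S(\varphi\sbullet r) = \sum_{i=1}^p \chi^j_A(\varphi(s_i))\cdot \sum_\alpha \alpha_i\,\varphi(\bfs^{\alpha-v^i})\,r_\alpha.
\end{equation*}
I would then substitute the coefficient-wise form $\alpha_i r_\alpha = \sum_{\mu+\gamma=\alpha,\,\gamma_i>0} r_\mu\,\varepsilon^i_\gamma(r)$ of the defining identity $\chi^i_S(r) = r\,\varepsilon^i(r)$, re-index $\alpha = \mu+\gamma$, and exploit the commutativity of $A[[\bfu]]_\Omega$-scalars to collapse $\sum_\mu \varphi(\bfs^\mu)r_\mu$ into $\varphi\sbullet r$. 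Setting $P^{j,i}_\gamma := \chi^j_A(\varphi(s_i))\,\varphi(\bfs^{\gamma-v^i}) \in A[[\bfu]]_\Omega$, this yields
\begin{equation*}
\chi^j_S(\varphi\sbullet r) = \sum_{i,\,\gamma_i > 0} P^{j,i}_\gamma\cdot (\varphi\sbullet r)\cdot \varepsilon^i_\gamma(r).
\end{equation*}
Since $\varphi\sbullet r$ is a $(\varphi\sbullet D)$-element (Proposition \ref{prop:bullet-D-elements}(a)), Lemma \ref{lemma:charact-D-elements} lets me push each scalar $P^{j,i}_\gamma$ past $\varphi\sbullet r$ at the cost of $\widetilde{(\varphi\sbullet D)^*}(P^{j,i}_\gamma)$; left-multiplying by $(\varphi\sbullet r)^*$ cancels $\varphi\sbullet r$ and produces the $A[[\bfu]]_\Omega$-linear expression $\varepsilon^j(\varphi\sbullet r) = \sum_{i,\gamma_i>0} \widetilde{(\varphi\sbullet D)^*}(P^{j,i}_\gamma)\,\varepsilon^i_\gamma(r)$.

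To extract the $\bfu^e$-coefficient, I would use that $\widetilde{(\varphi\sbullet D)^*}$ is a $k[[\bfu]]_\Omega$-algebra automorphism to split the product in $P^{j,i}_h$: the intertwining formula gives $\widetilde{(\varphi\sbullet D)^*}(\varphi(\bfs^{h-v^i})) = \varphi^D(\bfs^{h-v^i})$, while $\widetilde{(\varphi\sbullet D)^*}(\chi^j_A(\varphi(s_i)))$ is computed by expanding $\chi^j_A(\varphi(s_i)) = \sum_g g_j\,{\bf C}_g(\varphi,v^i)\,\bfu^g$ and applying $\widetilde{(\varphi\sbullet D)^*}|_A = \Phi_{(\varphi\sbullet D)^*}$ coefficient-wise, with $(\varphi\sbullet D)^*_{e_1} = \sum_\beta {\bf C}_{e_1}(\varphi^D,\beta)\,D^*_\beta$ coming from $(\varphi\sbullet D)^* = \varphi^D\sbullet D^*$. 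The resulting $\bfu^e$-coefficient then contains two separate ${\bf C}(\varphi^D,\cdot)$ factors whose convolution collapses, via $\varphi^D(\bfs^\beta)\varphi^D(\bfs^{h-v^i}) = \varphi^D(\bfs^{\beta+h-v^i})$, into the single ${\bf C}_f(\varphi^D,\beta+h-v^i)$ with $f+g=e$ appearing in the theorem; the constraints $g_j > 0$ and $\gamma_i > 0$ give the vanishings for $e_j = 0$ or $h_i = 0$. I expect the main obstacle to be this final coefficient-collapse: it requires careful book-keeping across several summation indices (including an intermediate one from $\Phi_{(\varphi\sbullet D)^*}$), verification of the support constraint $|\beta+h|-1\leq |f|$, and a well-timed use of the algebra-map property of $\varphi^D$ to fuse the two ${\bf C}(\varphi^D,\cdot)$ occurrences.
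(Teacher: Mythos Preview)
Your argument is correct and arrives at the stated formula, but it is organized differently from the paper's proof. The paper expands $\varepsilon^j(\varphi\sbullet r) = (\varphi^D\sbullet r^*)\,\sum_\alpha \chi^j(\varphi(\bfs^\alpha))\,r_\alpha$ directly at the coefficient level, uses that $r^*$ is a $D^*$-element to push each $r^*_\mu$ past the scalar $\mathbf{M}^{j,i}_{\nu,g}$, and obtains an expression $\sum N^{j,i}_{e,\nu,\gamma}\,\nu_i\, r^*_\gamma r_\nu$; only then does it invoke the coefficient identity of Proposition~\ref{prop:varphi-D-main}(i) to prove that $N^{j,i}_{e,\nu,\gamma}$ depends on $\nu,\gamma$ only through $h=\nu+\gamma$, so that the inner sum $\sum_{\gamma+\nu=h}\nu_i r^*_\gamma r_\nu$ can be recognized as $\varepsilon^i_h(r)$.

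Your route inserts the relation $\alpha_i r_\alpha=\sum r_\mu\,\varepsilon^i_\gamma(r)$ at the outset and reassembles $\sum_\mu \varphi(\bfs^\mu)r_\mu$ into $\varphi\sbullet r$, so that $\varepsilon^i_h(r)$ appears immediately with a single scalar coefficient $\widetilde{(\varphi\sbullet D)^*}(P^{j,i}_h)$. You then use the same Proposition~\ref{prop:varphi-D-main} in its global form (the intertwining $\widetilde{(\varphi\sbullet D)^*}\pcirc\varphi=\varphi^D\pcirc\widetilde{D^*}$) to evaluate that coefficient, and the multiplicativity of $\varphi^D$ to fuse $\mathbf{C}_{e_1}(\varphi^D,\beta)\,\mathbf{C}_{f_2}(\varphi^D,h-v^i)$ into $\mathbf{C}_f(\varphi^D,\beta+h-v^i)$. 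The two routes use the same structural inputs (Propositions~\ref{prop:bullet-D-elements} and~\ref{prop:varphi-D-main}, Lemma~\ref{lemma:charact-D-elements}) but in a different order: the paper postpones the appearance of $\varepsilon^i_h(r)$ and must verify an intermediate ``depends only on $\nu+\gamma$'' step, whereas your argument bypasses that step entirely at the cost of the extra intertwining identity and the coefficient collapse, which you have correctly identified and justified.
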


\begin{proof} Let us write $r= \sum_{\alpha\in\Delta} r_\alpha \bfs^\alpha$. For each $\alpha\in\Delta$ and each $j=1,\dots,q $ we have:
\begin{eqnarray*}
&\displaystyle
\chi^j (\varphi(\bfs^\alpha)) = \chi^j\left( \prod_{\scriptscriptstyle i=1}^{\scriptscriptstyle p} \varphi(s_i)^{\alpha_i} \right) = \sum_{\scriptscriptstyle \alpha_i\neq 0} \alpha_i \varphi(s_i)^{\alpha_i-1} \left( \prod_{\substack{\scriptscriptstyle 1\leq l\leq p\\ \scriptscriptstyle l\neq i}} \varphi(s_l)^{\alpha_l} \right) \chi^j(\varphi(s_i)) =
& \\
&\displaystyle
\sum_{\scriptscriptstyle \alpha_i\neq 0} \alpha_i \varphi\left(\bfs^{\alpha-v^i}\right)
\chi^j(\varphi(s_i)) = 
\sum_{\scriptscriptstyle \alpha_i\neq 0} \alpha_i \left(\sum_{\scriptscriptstyle |e|\geq |\alpha|-1}
{\bf C}_e(\varphi,\alpha-v^i) \bfu^e \right) \left( \sum_{\scriptscriptstyle e_j > 0} e_j {\bf C}_e(\varphi,v^i) \bfu^e \right)=
& \\
&\displaystyle
\sum_{\scriptscriptstyle \alpha_i\neq 0} \alpha_i \left(
\sum_{\substack{\scriptscriptstyle |e|\geq |\alpha|\\ \scriptscriptstyle e_j>0}} \left(
\sum_{\substack{\scriptscriptstyle e'+e''= e\\ \scriptscriptstyle |e'|\geq |\alpha|-1\\ \scriptscriptstyle e''_j> 0}} e''_j {\bf C}_{e'}(\varphi,\alpha-v^i) {\bf C}_{e''}(\varphi,v^i) \right)
\bfu^e \right) = 
\sum_{\scriptscriptstyle \alpha_i\neq 0} \alpha_i \left(
\sum_{\substack{\scriptscriptstyle |e|\geq |\alpha|\\ \scriptscriptstyle e_j> 0}} \mathbf{M}^{j,i}_{\alpha,e} \bfu^e \right)
\end{eqnarray*}
with:
$$ \mathbf{M}^{j,i}_{\alpha,e} :=  \sum_{\substack{\scriptscriptstyle e'+e''= e\\ \scriptscriptstyle |e'|\geq |\alpha|-1\\ \scriptscriptstyle e''_j>0}} e''_j {\bf C}_{e'}(\varphi,\alpha-v^i) {\bf C}_{e''}(\varphi,v^i)
$$
for 
$ i\in\supp \alpha$ and $e\in \Omega$ with $e_j>0$ and $|e|\geq |\alpha|$. If either $\alpha_i=0$ or $e_j=0$, we set $\mathbf{M}^{j,i}_{\alpha,e} = 0$. So, for each $j=1,\dots,q$ we have:
\begin{eqnarray*}
& \displaystyle
\varepsilon^j(\varphi\sbullet r) = (\varphi\sbullet r)^* \chi^j(\varphi\sbullet r) = \cdots =
(\varphi^D\sbullet r^*) \left(\sum_{\scriptscriptstyle \alpha \in\Delta}  \chi^j(\varphi(\bfs^\alpha)) r_\alpha\right) =
&
\\
& \displaystyle
\left(
\sum_{\scriptscriptstyle e\in\Omega} \left( 
\sum_{\scriptscriptstyle |\alpha|\leq |e|} {\bf C}_e(\varphi^D,\alpha)r^*_\alpha \right) \bfu^e \right)
\left( 
\sum_{\scriptscriptstyle e_j>0} \left(
 \sum_{\substack{\scriptscriptstyle |\alpha|\leq |e|\\ \scriptscriptstyle \alpha_i\neq 0}}
 \alpha_i \mathbf{M}^{j,i}_{\alpha,e} r_\alpha \right) \bfu^e
\right) =
&
\\
& \displaystyle
\sum_{\scriptscriptstyle e_j>0} \left(
 \sum_{\substack{\scriptscriptstyle f + g = e\\ \scriptscriptstyle 
|\mu|\leq |f|, |\nu|\leq |g|\\ \scriptscriptstyle g_j>0, i\in \supp \nu}}
\nu_t
{\bf C}_f(\varphi^D,\mu)r^*_\mu \mathbf{M}^{j,i}_{\nu,g} r_{\nu} 
\right) \bfu^e \stackrel{(\star)}{=}
&\\
&\displaystyle
\sum_{\scriptscriptstyle e_j>0} \left(
\sum_{\substack{\scriptscriptstyle f + g = e\\ \scriptscriptstyle 
|\beta+\gamma|\leq |f|, 0<|\nu|\leq |g|\\ \scriptscriptstyle g_j>0, i\in \supp \nu}}
\nu_i
{\bf C}_f(\varphi^D,\beta+\gamma)D^*_\beta( \mathbf{M}^{j,i}_{\nu,g} ) r^*_\gamma  r_\nu
\right) \bfu^e =
&\\
&\displaystyle
\sum_{\scriptscriptstyle e_j>0} \left(
\sum_{\substack{\scriptscriptstyle   0<|h|\leq |e|\\ \scriptscriptstyle   i\in\supp h}}
\sum_{\substack{\scriptscriptstyle \gamma+\nu=h\\ \scriptscriptstyle i\in \supp \nu}}
\left( \sum_{\substack{\scriptscriptstyle f + g = e\\ \scriptscriptstyle |\beta+\gamma|\leq |f|\\ \scriptscriptstyle 
 |\nu|\leq |g|, g_j>0}} 
{\bf C}_f(\varphi^D,\beta+\gamma)D^*_\beta(  \mathbf{M}^{j,i}_{\nu,g}  ) 
\right) 
\nu_i r^*_\gamma  r_\nu  \right) 
\bfu^e =
&\\
&\displaystyle
\sum_{\scriptscriptstyle e_j>0} \left(
\sum_{\substack{\scriptscriptstyle   0<|h|\leq |e|\\ \scriptscriptstyle   i\in\supp h}}
\sum_{\substack{\scriptscriptstyle \gamma+\nu=h\\ \scriptscriptstyle i\in \supp \nu}}
N^{j,i}_{e,\nu,\gamma}
\nu_i r^*_\gamma  r_\nu  \right) 
\bfu^e,
\end{eqnarray*}
where equality $(\star)$ comes from the fact that $r^*$ is a $D^*$-element (see Lemma \ref{lemma:charact-D-elements})
and
$$ N^{j,i}_{e,\nu,\gamma} = \left\{
\begin{array}{cl}\displaystyle 
\sum_{\substack{\scriptscriptstyle f + g = e\\ \scriptscriptstyle |\beta+\gamma|\leq |f|\\ \scriptscriptstyle 
 |\nu|\leq |g|,g_j>0}} 
{\bf C}_f(\varphi^D,\beta+\gamma)D^*_\beta(\mathbf{M}^{j,i}_{\nu,g}) & \text{if}\ 
\nu_i>0, e_j > 0, |e| \geq |\nu+\gamma|, \\
0 & \text{otherwise.}
\end{array}\right.
$$
But, for $h=\nu+\gamma$, we have:
\begin{eqnarray*}
& \displaystyle
N^{j,i}_{e,\nu,\gamma} = \sum_{\substack{\scriptscriptstyle f + g = e\\ \scriptscriptstyle |\beta+\gamma|\leq |f|\\ \scriptscriptstyle 
 |\nu|\leq |g|,g_j>0}} 
{\bf C}_f(\varphi^D,\beta+\gamma)D^*_\beta(\mathbf{M}^{j,i}_{\nu,g}) =
&
\\
&\displaystyle
\sum_{\substack{\scriptscriptstyle f + g = e\\ \scriptscriptstyle |\beta+\gamma|\leq |f|\\ \scriptscriptstyle 
 |\nu|\leq |g|,g_j>0}} 
{\bf C}_f(\varphi^D,\beta+\gamma)\, D^*_\beta \left(
 \sum_{\substack{\scriptscriptstyle g'+g''= g\\ \scriptscriptstyle |g'|\geq |\nu|-1\\ \scriptscriptstyle g''_j>0}} g''_j {\bf C}_{g'}(\varphi,\nu-v^i) {\bf C}_{g''}(\varphi,v^i)
\right) =
&
\end{eqnarray*}

\begin{eqnarray*}
& \displaystyle
\sum_{\substack{\scriptscriptstyle f + g'+g'' = e\\ \scriptscriptstyle |\beta'+\beta''+\gamma|\leq |f|\\ \scriptscriptstyle 
 |\nu|-1\leq |g'|,g''_j>0}} 
g''_u {\bf C}_f(\varphi^D,\beta'+\beta''+\gamma)\, 
 D^*_{\beta'} \left( {\bf C}_{g'}(\varphi,\nu-v^i) 
\right)\,
D^*_{\beta''} \left( {\bf C}_{g''}(\varphi,v^i)
\right) =
&
\\
& \displaystyle
\sum_{\substack{\scriptscriptstyle \rho+g'' = e\\ \scriptscriptstyle |\beta''+h|-1\leq |\rho|\\ \scriptscriptstyle 
g''_j>0}} g''_j
\left(
  \sum_{\substack{\scriptscriptstyle f + g' = \rho\\ 
          \scriptscriptstyle |\beta''+\gamma+\beta'|\leq |f|\\ 
           \scriptscriptstyle |\nu|-1\leq |g'|}} 
  {\bf C}_f(\varphi^D,\beta''+\gamma+\beta')\, 
  D^*_{\beta'} \left( {\bf C}_{g'}(\varphi,\nu-v^i) \right)
\right)
 D^*_{\beta''} \left( {\bf C}_{g''}(\varphi,v^i) \right) \stackrel{(\star)}{=}
&
\\
& \displaystyle
\sum_{\substack{\scriptscriptstyle \rho+g'' = e\\ \scriptscriptstyle |\beta''+h|-1\leq |\rho|\\ \scriptscriptstyle 
g''_j>0}} g''_j\,
{\bf C}_\rho(\varphi^D,\beta''+\gamma+\nu-v^i)\,
D^*_{\beta''} \left( {\bf C}_{g''}(\varphi,v^i) \right),
\end{eqnarray*}
where equality $(\star)$ comes from Proposition \ref{prop:varphi-D-main}, 
and so $N^{j,i}_{e,\nu,\gamma}$ only depends on $h=\gamma+\nu$. By setting:
$$
{\bf N}^{j,i}_{e,h} := 
\sum_{\substack{\scriptscriptstyle f + g = e\\ \scriptscriptstyle |\beta+h|-1\leq |f|\\ \scriptscriptstyle 
 g_j>0}} g_j
{\bf C}_f(\varphi^D,\beta+h-v^i)D^*_\beta({\bf C}_g(\varphi,v^i)) 
$$
for $e_j, h_i > 0, |e|\geq |h|$ and ${\bf N}^{j,i}_{e,h} := 0$ otherwise, we have
$N^{j,i}_{e,\nu,\gamma} = {\bf N}^{j,i}_{e,\gamma+\nu}$, and so:

\begin{eqnarray*}
& \displaystyle
\varepsilon^j(\varphi\sbullet r) = \cdots = \sum_{\scriptscriptstyle e_j>0} \left(
\sum_{\substack{\scriptscriptstyle   0<|h|\leq |e|\\ \scriptscriptstyle   o\in\supp h}}
\sum_{\substack{\scriptscriptstyle \gamma+\nu=h\\ \scriptscriptstyle i\in \supp \nu}}
N^{j,i}_{e,\nu,\gamma}
\nu_i r^*_\gamma  r_\nu  \right) 
\bfu^e =
&
\\
& \displaystyle
\sum_{\scriptscriptstyle e_j>0} \left(
\sum_{\substack{\scriptscriptstyle   0<|h|\leq |e|\\ \scriptscriptstyle   i\in\supp h}}
\sum_{\substack{\scriptscriptstyle \gamma+\nu=h\\ \scriptscriptstyle i\in \supp \nu}}
{\bf N}^{j,i}_{e,\gamma+\nu}
\nu_i r^*_\gamma  r_\nu  \right) 
\bfu^e =
\sum_{\scriptscriptstyle e_j>0} \left(
\sum_{\substack{\scriptscriptstyle   0<|h|\leq |e|\\ \scriptscriptstyle   i\in\supp h}}
{\bf N}^{j,i}_{e,h}
\sum_{\substack{\scriptscriptstyle \gamma+\nu=h\\ \scriptscriptstyle i\in \supp \nu}}
\nu_i r^*_\gamma  r_\nu  \right) 
\bfu^e =
&
\\
& \displaystyle
\sum_{\scriptscriptstyle e_j>0} \left(
\sum_{\substack{\scriptscriptstyle   0<|h|\leq |e|\\ \scriptscriptstyle   i\in\supp h}}
{\bf N}^{j,i}_{e,h}
\varepsilon^i_h(r)
\right) 
\bfu^e.
\end{eqnarray*}
\end{proof}

\begin{cor} Under the hypotheses of Theorem \ref{thm:epsilon-bullet}, we have:
$$
\varepsilon^j_e(\varphi\sbullet D) = \sum_{\substack{\scriptscriptstyle   0<|h|\leq |e|\\ \scriptscriptstyle i\in \supp h}}
{\bf N}^{j,i}_{e,h}
\varepsilon^i_h(D)\quad \forall e\in \Omega, \forall j=1,\dots,q. 
$$
\end{cor}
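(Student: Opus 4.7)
The plan is to specialize Theorem \ref{thm:epsilon-bullet} to the case $S=R=\End_k(A)$ and $r=D$. The only point requiring verification is that, in this setting, $D$ itself qualifies as a $D$-element in the sense of Definition \ref{def:D-element}. This is immediate from Proposition \ref{prop:caracter_HS}(c), which asserts that any $D\in \HS^p_k(A;\Delta)$ satisfies $D\in \U^p(R;\Delta)$ and $Da = \widetilde{D}(a)\, D$ for every $a\in A[[\bfs]]_\Delta$---precisely the defining condition for $D$ to be a $D$-element when the ambient $k$-algebra over $A$ is $R$ itself.

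First I would observe that $R = \End_k(A)$ is naturally a $k$-algebra over $A$ via the structural map $a\mapsto (x\mapsto ax)$, so all the hypotheses of Theorem \ref{thm:epsilon-bullet} hold with this choice. Next I would substitute $r=D$ into the general formula (\ref{eq:varepsilon-varphi-of-D-element}) and read off the result with the very same universal coefficients ${\bf N}^{j,i}_{e,h}$ already produced in the proof of the theorem (note that these coefficients depend only on $\varphi$ and $D$, not on the particular $D$-element $r$).

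No real obstacle is expected: the corollary is a direct specialization of the theorem. The conceptual point worth emphasizing is that Theorem \ref{thm:epsilon-bullet} was deliberately stated in the greater generality of arbitrary $D$-elements in arbitrary $k$-algebras over $A$, precisely so that the ``universal'' case $r=D$ (which yields this corollary) is recovered on the one hand, while the more general situation---needed for the applications mentioned in the introduction, such as the comparison between HS-modules and integrable connections in \cite{nar_HSmod_vs_IC}---is accommodated on the other.
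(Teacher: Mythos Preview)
Your proposal is correct and matches the paper's approach: the corollary is an immediate specialization of Theorem \ref{thm:epsilon-bullet} with $S=R=\End_k(A)$ and $r=D$, the point that $D$ is a $D$-element being already noted right after Definition \ref{def:D-element} (and, as you say, following from Proposition \ref{prop:caracter_HS}(c)). The paper in fact gives no separate proof for the corollary, so your justification is exactly what is intended.
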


\bibliographystyle{amsplain}

\end{document}